\providecommand{\U}[1]{\protect\rule{.1in}{.1in}}
\newtheorem{theorem}{Theorem}
\newtheorem{corollary}[theorem]{Corollary}
\newtheorem{definition}[theorem]{Definition}
\newtheorem{example}[theorem]{Example}
\newtheorem{lemma}[theorem]{Lemma}
\newtheorem{proposition}[theorem]{Proposition}
\newtheorem{remark}[theorem]{Remark}
\newenvironment{proof}[1][Proof]{\noindent\textbf{#1.} }{\ \rule{0.5em}{0.5em}}
\begin{document}

\author{Vladislav V. Kravchenko$^1$, V\'{\i}ctor A. Vicente-Ben\'{\i}{}tez$^2$\\{\small $^1$Department of Mathematics, Cinvestav, Campus Quer\'{e}taro}\\{\small Libramiento Norponiente \# 2000, }\\{\small Fracc. Real de Juriquilla, Quer\'{e}taro, Qro. C.P. 76230 M\'{e}xico }\\
	{\small $^2$Instituto de Matemáticas de la U.N.A.M. Campus Juriquilla}\\{\small Boulevard Juriquilla 3001, Juriquilla, Querétaro C.P. 076230 M\'{e}xico}
	\\{\small vkravchenko@math.cinvestav.edu.mx, va.vicentebenitez@im.unam.mx, } }
\title{Schr\"odinger equation with finitely many $\delta$-interactions: closed form, integral and series representations for solutions}
\date{}
\maketitle

\begin{abstract}
A closed form solution for the one-dimensional Schr\"{o}dinger equation with a
finite number of $\delta$-interactions
\[
\mathbf{L}_{q,\mathfrak{I}_{N}}y:=-y^{\prime\prime}+\left(  q(x)+\sum
_{k=1}^{N}\alpha_{k}\delta(x-x_{k})\right)  y=\lambda y,\quad0<x<b,\;\lambda
\in\mathbb{C}%
\]
is presented in terms of the solution of the unperturbed equation
\[
\mathbf{L}_{q}y:=-y^{\prime\prime}+q(x)y=\lambda y,\quad0<x<b,\;\lambda
\in\mathbb{C}%
\]
and a corresponding transmutation operator $\mathbf{T}_{\mathfrak{I}_{N}}^{f}$
is obtained in the form of a Volterra integral operator. With the aid of the
spectral parameter power series method, a practical construction of the image
of the transmutation operator on a dense set is presented, and it is proved
that the operator $\mathbf{T}_{\mathfrak{I}_{N}}^{f}$ transmutes the second
derivative into the Schr\"{o}dinger operator $\mathbf{L}_{q,\mathfrak{I}_{N}}$
on a Sobolev space $H^{2}$. A Fourier-Legendre series representation for the
integral transmutation kernel is developed, from which a new representation
for the solutions and their derivatives, in the form of a Neumann series of
Bessel functions, is derived.

\end{abstract}

\textbf{Keywords: } One-dimensional Schr\"odinger equation, point
interactions, transmutation operator, Fourier-Legendre series, Neumann series
of Bessel functions. \newline

\textbf{MSC Classification:} 34A25; 34A45; 46F10; 47G10; 81Q05.

\section{Introduction}

We consider the one-dimensional Schr\"{o}dinger equation with a finite number
of $\delta$-interactions
\begin{equation}
-y^{\prime\prime}+\left(  q(x)+\sum_{k=1}^{N}\alpha_{k}\delta(x-x_{k})\right)
y=\lambda y,\quad0<x<b,\;\lambda\in\mathbb{C},\label{Schrwithdelta}%
\end{equation}
where $q\in L_{2}(0,b)$ is a complex valued function, $\delta(x)$ is the Dirac
delta distribution, $0<x_{1}<x_{2}<\dots<x_{N}<b$ and $\alpha_{1}%
,\dots,\alpha_{N}\in\mathbb{C}\setminus\{0\}$. Schr\"{o}dinger equations with
distributional coefficients supported on a set of measure zero naturally
appear in various problems of mathematical physics
\cite{alveberio1,alveberio2,atkinson,barrera1,coutinio,uncu} and have been
studied in a considerable number of publications and from different
perspectives. In general terms, Eq. (\ref{Schrwithdelta}) can be interpreted
as a regular equation, i.e., with the regular potential $q\in L_{2}(0,b)$,
whose solutions are continuous and such that their first derivatives satisfy
the jump condition $y^{\prime}(x_{k}+)-y^{\prime}(x_{k}-)=\alpha_{k}y(x_{k})$
at special points \cite{kochubei, kostenko}. Another approach consists in
considering the interval $[0,b]$ as a quantum graph whose edges are the
segments $[x_{k},x_{k+1}]$, $k=0,\dots,N$, (setting $x_{0}=0$, $x_{N+1}=b$),
and the Schr\"{o}dinger operator with the regular potential $q$ as an
unbounded operator on the direct sum $\bigoplus_{k=0}^{N}H^{2}(x_{k},x_{k+1}%
)$, with the domain given by the families $(y_{k})_{k=0}^{N}$ that satisfy the
condition of continuity $y_{k}(x_{k}-)=y_{k+1}(x_{k}+)$ and the jump condition
for the derivative $y_{k+1}^{\prime}(x_{k}+)-y_{k}^{\prime}(x_{k}-)=\alpha
_{k}y_{k}(x_{k})$ for $k=1,\dots N$ (see, e.g.,
\cite{gesteszy1,kurasov1,kurasov2}). This condition for the derivative is
known in the bibliography of quantum graphs as the $\delta$-type condition
\cite{berkolaikokuchment}. Yet another approach implies a regularization of
the Schrodinger operator with point interactions, that is, finding a subdomain
of the Hilbert space $L_{2}(0,b)$, where the operator defines a function in
$L_{2}(0,b)$. For this, note that the potential $q(x)+\sum_{k=1}^{N}\alpha
_{k}\delta(x-x_{k})$ defines a functional that belongs to the Sobolev space
$H^{-1}(0,b)$. In \cite{bondarenko1,gulyev,hryniv,schkalikov} these forms of
regularization have been studied, rewriting the operator by means of a
factorization that involves a primitive $\sigma$ of the potential.

Theory of transmutation operators, also called transformation operators, is a
widely used tool in studying differential equations and spectral problems
(see, e.g., \cite{BegehrGilbert, directandinverse, levitan,
marchenko,SitnikShishkina Elsevier}), and it is especially well developed for
Schr\"{o}dinger equations with regular potentials. It is known that under
certain general conditions on the potential $q$ the transmutation operator
transmuting the second derivative into the Schr\"{o}dinger operator can be
realized in the form of a Volterra integral operator of the second kind, whose
kernel can be obtained by solving a Goursat problem for the Klein-Gordon
equation with a variable coefficient \cite{hugo2,levitan, marchenko}.
Furthermore, functional series representations of the transmutation kernel
have been constructed and used for solving direct and inverse Sturm-Liouville
problems \cite{directandinverse,neumann}. For Schr\"{o}dinger equations with
$\delta$-point interactions, there exist results about equations with a single
point interaction and discontinuous conditions $y(x_{1}+)=ay(x_{1}-)$,
$y^{\prime}(x_{1}+)=\frac{1}{a}y^{\prime}(x_{1}-)+dy(x_{1}-)$, $a,b>0$ (see
\cite{hald,yurkoart1}), and for equations in which the spectral parameter is
also present in the jump condition (see \cite{akcay,mammadova,manafuv}).
Transmutation operators have also been studied for equations with
distributional coefficients belonging to the $H^{-1}$-Sobolev space in
\cite{bondarenko1,hryniv,schkalikov}. In \cite{hugo2}, the possibility of
extending the action of the transmutation operator for an $L_{1}$-potential to
the space of generalized functions $\mathscr{D}^{\prime}$, was studied.

The aim of this work is to present a construction of a transmutation operator
for the Schr\"{o}dinger equation with a finite number of point interactions.
The transmutation operator appears in the form of a Volterra integral
operator, and with its aid we derive analytical series representations for
solutions of (\ref{Schrwithdelta}). For this purpose, we obtain a closed form
of the general solution of (\ref{Schrwithdelta}). From it, the construction of
the transmutation operator is deduced, where the transmutation kernel is
ensembled from the convolutions of the kernels of certain solutions of the
regular equation (with the potential $q$), in a finite number of steps. Next,
the spectral parameter power series (SPPS) method is developed for Eq.
(\ref{Schrwithdelta}). The SPPS method was developed for continuous
(\cite{kravchenkospps1,sppsoriginal}) and $L_{1}$-potentials (\cite{blancarte}%
), and it has been used in a piecewise manner for solving spectral problems
for equations with a finite number of point interactions in
\cite{barrera1,barrera2,rabinovich1}. Following \cite{hugo}, we use the SPPS
method to obtain an explicit construction of the image of the transmutation
operator acting on polynomials. Similarly to the case of a regular potential
\cite{neumann}, we obtain a representation of the transmutation kernel as a
Fourier series in terms of Legendre polynomials and as a corollary, a
representation for the solutions of equation (\ref{Schrwithdelta}) in terms of
a Neumann series of Bessel functions. Similar representations are obtained for
the derivatives of the solutions. It is worth mentioning that the methods
based on Fourier-Legendre representations and Neumann series of Bessel
functions have shown to be an effective tool in solving direct and inverse
spectral problems for equations with regular potentials, see, e.g.,
\cite{directandinverse,neumann, KravTorbadirect}.

In Section 2, basic properties of the solutions of (\ref{Schrwithdelta}) are
compiled, studying the equation as a distributionional sense in
$\mathscr{D}^{\prime}(0,b)$ and deducing properties of its regular solutions.
Section 3 presents the construction of the closed form solution of
(\ref{Schrwithdelta}). In Section 4, the construction of the transmutation
operator and the main properties of the transmutation kernel are developed. In
Section 5, the SPPS method is presented, with the mapping and transmutation
properties of the transmutation operator. Section 6 presents the
Fourier-Legendre series representations for the transmutation kernels and the
Neumann series of Bessel functions representations for solutions of
(\ref{Schrwithdelta}), and a recursive integral relation for the Fourier-Legendre coefficients is obtained. Finally, in Section 7, integral and Neumann series of
Bessel functions representations for the derivatives of the solutions are presented.

\section{Problem setting and properties of the solutions}

We use the standard notation $W^{k,p}(0,b)$ ($b>0$) for the Sobolev space of
functions in $L_{p}(0,b)$ that have their first $k$ weak derivatives in
$L_{p}(0,b)$, $1\leqslant p\leqslant\infty$ and $k\in\mathbb{N}$. When $p=2$,
we denote $W^{k,2}(0,b)=H^{k}(0,b)$. We have that $W^{1,1}(0,b)=AC[0,b]$, and
$W^{1,\infty}(0,b)$ is precisely the class of Lipschitz continuous functions
in $[0,b]$ (see \cite[Ch. 8]{brezis}). The class of smooth functions with
compact support in $(0,b)$ is denoted by $C_{0}^{\infty}(0,b)$, then we define
$W_{0}^{1,p}(0,b)=\overline{C_{0}^{\infty}(0,b)}^{W^{1,p}}$ and $H_{0}%
^{1}(0,b)=W_{0}^{1,2}(0,b)$. Denote the dual space of $H^{1}_{0}(0,b)$ by
$H^{-1}(0,b)$. By $L_{2,loc}(0,b)$ we denote the class of measurable functions
$f:(0,b)\rightarrow\mathbb{C}$ such that $\int_{\alpha}^{\beta}|f(x)|^{2}%
dx<\infty$ for all subintervals $[\alpha, \beta]\subset(0,b)$.

The characteristic function of an interval $[A,B]\subset\mathbb{R}$ is denoted
by $\chi_{[A,B]}(t)$. In order to simplify the notation, for the case of a
symmetric interval $[-A,A]$, we simply write $\chi_{A}$. The Heaviside
function is given by $H(t)=\chi_{(0,\infty)}(t)$. The lateral limits of the
function $f$ at the point $\xi$ are denoted by $f(\xi\pm)=\lim_{x\rightarrow
\xi{\pm}}f(x)$. We use the notation $\mathbb{N}_{0}=\mathbb{N}\cup\{0\}$. The
space of distributions (generalized functions) over $C_{0}^{\infty}(0,b)$ is
denoted by $\mathscr{D}^{\prime}(0,b)$, and the value of a distribution
$f\in\mathscr{D}^{\prime}(0,b)$ at $\phi\in C_{0}^{\infty}(0,b)$ is denoted by
$(f,\phi)_{C_{0}^{\infty}(0,b)}$.\newline

Let $N\in\mathbb{N}$ and consider a partition $0<x_{1}<\dots<x_{N}<b$ and the
numbers $\alpha_{1}, \dots, \alpha_{N}\in\mathbb{C}\setminus\{0\}$. The set
$\mathfrak{I}_{N}=\{(x_{j},\alpha_{j})\}_{j=1}^{N}$ contains the information
about the point interactions of Eq. (\ref{Schrwithdelta}). Denote
\[
q_{\delta,\mathfrak{I}_{N}}(x):= \sum_{k=1}^{N}\alpha_{k}\delta(x-x_{k}%
),\quad\mathbf{L}_{q}:= -\frac{d^{2}}{dx^{2}}+q(x), \quad\mathbf{L}%
_{q,\mathfrak{I}_{N}}:= \mathbf{L}_{q}+q_{\delta,\mathfrak{I}_{N}}(x).
\]
For $u\in L_{2,loc}(0,b)$, $\mathbf{L}_{q,\mathfrak{I}_{N}}u$ defines a
distribution in $\mathscr{D}^{\prime}(0,b)$ as follows
\[
(\mathbf{L}_{q,\mathfrak{I}_{N}}u,\phi)_{C_{0}^{\infty}(0,b)}:= \int_{0}^{b}
u(x)\mathbf{L}_{q}\phi(x)dx+ \sum_{k=1}^{N}\alpha_{k} u(x_{k})\phi(x_{k})
\quad\mbox{for } \, \phi\in C_{0}^{\infty}(0,b).
\]
Note that the function $u$ must be well defined at the points $x_{k}$, $k=1,
\dots, N$. Actually, for a function $u\in H^{1}(0,b)$, the distribution
$\mathbf{L}_{q,\mathfrak{I}_{N}}u$ can be extended to a functional in
$H^{-1}(0,b)$ as follows
\[
(\mathbf{L}_{q,\mathfrak{I}_{N}}u,v)_{H_{0}^{1}(0,b)}:= \int_{0}^{b}
\{u^{\prime}(x)v^{\prime}(x)+q(x)u(x)v(x) \}dx+ \sum_{k=1}^{N}\alpha_{k}
u(x_{k})v(x_{k})\quad\mbox{for }\, v\in H_{0}^{1}(0,b).
\]

We say that a distribution $F\in\mathscr{D}^{\prime}(0,b)$ is $L_{2}$-regular,
if there exists a function $g\in L_{2}(0,b)$ such that $(F,\phi)_{C_{0}%
^{\infty}(0,b)}=(g,\phi)_{C_{0}^{\infty}(0,b)}:=\int_{0}^{b}g(x)\phi(x)dx$ for
all $\phi\in{C_{0}^{\infty}(0,b)}$.

Denote $x_{0}=0$, $x_{N+1}=b$. We recall the following characterization of
functions $u\in L_{2,loc}(0,b)$ for which $\mathbf{L}_{q,\mathfrak{I}_{N}}u$
is $L_{2}$-regular.

\begin{proposition}
\label{propregular}  If $u\in L_{2,loc}(0,b)$, then the distribution
$\mathbf{L}_{q,\mathfrak{I}_{N}}u$ is $L_{2}$-regular iff the following
conditions hold. 

\begin{enumerate}

\item For each $k=0, \dots, N$, $u|_{(x_{k},x_{k+1})}\in H^{2}(x_{k},x_{k+1}%
)$. 

\item $u\in AC[0,b]$. 

\item The discontinuities of the derivative $u^{\prime}$ are located at the
points $x_{k}$, $k=1, \dots, N$, and the jumps are given by
\begin{equation}
\label{jumpderivative}u^{\prime}(x_{k}+)-u^{\prime}(x_{k}-)=\alpha_{j}
u(x_{k}) \quad\mbox{for    } k=1, \cdots, N.
\end{equation}

\end{enumerate}

In such case,
\begin{equation}
\label{regulardist}(\mathbf{L}_{q,\mathfrak{I}_{N}}u,\phi)_{C_{0}^{\infty
}(0,b)}=(\mathbf{L}_{q}u,\phi)_{C_{0}^{\infty}(0,b)} \quad\mbox{for all   }
\phi\in{C_{0}^{\infty}(0,b)}.
\end{equation}

\end{proposition}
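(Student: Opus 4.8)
The plan is to prove both implications of the iff and then verify the final displayed identity. For the forward direction, suppose $\mathbf{L}_{q,\mathfrak{I}_{N}}u$ is $L_2$-regular, so there is $g\in L_2(0,b)$ with $\int_0^b u\,\mathbf{L}_q\phi\,dx + \sum_k \alpha_k u(x_k)\phi(x_k) = \int_0^b g\phi\,dx$ for all $\phi\in C_0^\infty(0,b)$. First I would restrict to test functions $\phi$ supported in a single open subinterval $(x_k,x_{k+1})$: there the point-interaction sum vanishes, so $u$ solves $\mathbf{L}_q u = g$ in $\mathscr{D}'(x_k,x_{k+1})$, i.e.\ $-u'' + qu = g$ distributionally on each subinterval. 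Since $q u, g\in L_1$ on compact subintervals (using $u\in L_{2,loc}$, $q\in L_2$), elliptic regularity in one dimension (bootstrapping: $u''\in L_1_{loc}\Rightarrow u'\in AC_{loc}$, then $u\in H^2_{loc}$ using $qu,g\in L_2$) gives $u|_{(x_k,x_{k+1})}\in H^2(x_k,x_{k+1})$, which is item 1; in particular $u$ has one-sided limits $u(x_k\pm)$ and $u'(x_k\pm)$ at every interior node.

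Next I would recover the continuity and jump conditions. With item 1 in hand, integrate by parts twice on each subinterval in the identity $\int_0^b u\,\mathbf{L}_q\phi = \int_0^b g\phi - \sum_k\alpha_k u(x_k)\phi(x_k)$; the boundary terms at $x_0=0$ and $x_{N+1}=b$ drop because $\phi$ has compact support, and collecting the interior boundary terms at each $x_k$ yields, for all $\phi\in C_0^\infty(0,b)$,
\[
\sum_{k=1}^N\Big[\big(u(x_k+)-u(x_k-)\big)\phi'(x_k) - \big(u'(x_k+)-u'(x_k-)\big)\phi(x_k)\Big] = \sum_{k=1}^N \alpha_k u(x_k)\phi(x_k) - \int_0^b(-u''+qu-g)\phi .
\]
Choosing $\phi$ supported near a single $x_k$ and exploiting the independence of the values $\phi(x_k)$ and $\phi'(x_k)$ forces first $u(x_k+)=u(x_k-)$ for every $k$ (so, together with item 1, $u\in AC[0,b]$, item 2), and then $u'(x_k+)-u'(x_k-)=\alpha_k u(x_k)$, which is item 3 and simultaneously shows $-u''+qu=g$ a.e., so the discontinuities of $u'$ sit exactly at the $x_k$. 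For the converse, assuming items 1–3, I would simply run the integration by parts in reverse: starting from $\int_0^b u\,\mathbf{L}_q\phi\,dx$, integrate by parts twice on each $(x_k,x_{k+1})$, pick up the interior boundary contributions, use continuity of $u$ to kill the $\phi'(x_k)$ terms and the jump relation \eqref{jumpderivative} to convert the $\phi(x_k)$ terms into $-\sum_k\alpha_k u(x_k)\phi(x_k)$, landing on $(\mathbf{L}_{q,\mathfrak{I}_N}u,\phi) = \int_0^b(\mathbf{L}_q u)(x)\phi(x)\,dx$ with $\mathbf{L}_q u\in L_2(0,b)$ by item 1. This last computation also gives \eqref{regulardist}: the regularizing function $g$ is precisely the $L_2$-function $-u''+qu$ computed piecewise, so $(\mathbf{L}_{q,\mathfrak{I}_N}u,\phi)_{C_0^\infty} = (\mathbf{L}_q u,\phi)_{C_0^\infty}$ for all $\phi$.

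The routine parts are the integration-by-parts bookkeeping and the one-dimensional elliptic bootstrap, which I would cite or dispatch quickly. The step I expect to be the genuine obstacle is the forward direction's regularity claim (item 1): from only $u\in L_{2,loc}$ and the \emph{global} distributional identity involving the singular sum, one must first legitimately localize to obtain $-u''+qu=g$ on each open subinterval, and then argue that a distributional $L_{2,loc}$ solution of this second-order equation is actually $H^2$ on each closed subinterval $[x_k,x_{k+1}]$ — including up-to-the-boundary regularity so that the one-sided values $u(x_k\pm)$, $u'(x_k\pm)$ used in stating conditions 2 and 3 even make sense. One should be slightly careful that $u$ a priori need not be continuous at the nodes, so the symbol $u(x_k)$ appearing in the defining functional must be read as one of the one-sided limits until continuity is established; the extraction argument with localized test functions handles this, but it is the delicate point and deserves to be written carefully.
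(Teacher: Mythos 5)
Your proposal follows essentially the same route as the paper: localize to test functions supported in a single subinterval to get $-u''+qu=g$ and hence $H^{2}$ regularity there, then use the piecewise integration-by-parts (jump) identity with test functions that prescribe $\phi(x_{k})$ and $\phi'(x_{k})$ independently to extract continuity and the jump condition, and reverse the computation for the converse — the paper does exactly this, using the double primitive $v(x)=\int_0^x\int_0^t(qu-g)$ for the bootstrap and the explicit test functions $(x-x_{k})\psi$ and $\psi$. One small slip: in your displayed identity the term $\sum_{k}\alpha_{k}u(x_{k})\phi(x_{k})$ should appear with a minus sign on that side (it comes from moving the interaction sum across the equality), though the jump condition you ultimately state, $u'(x_{k}+)-u'(x_{k}-)=\alpha_{k}u(x_{k})$, carries the correct sign.
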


\begin{proof}
Suppose that $\mathbf{L}_{q,\mathfrak{I}_{N}}u$ is $L_{2}$-regular. Then there
exists $g\in L_{2}(0,b)$ such that
\begin{equation}
\label{aux0prop1}(\mathbf{L}_{q,\mathfrak{I}_{N}}u,\phi)_{C_{0}^{\infty}%
(0,b)}=(g,\phi)_{C_{0}^{\infty}(0,b)} \quad\mbox{for all  } \phi\in
{C_{0}^{\infty}(0,b)}.
\end{equation}

\begin{enumerate}

\item Fix $k\in\{1, \dots, N-1\}$. Take a test function $\phi\in C_{0}%
^{\infty}(0,b)$ with $\operatorname{Supp}(\phi) \subset(x_{k},x_{k+1})$.
Hence
\begin{equation}
\label{auxprop1}\int_{x_{k}}^{x_{k+1}}g(x)\phi(x)dx=(\mathbf{L}%
_{q,\mathfrak{I}_{N}}u,\phi)_{C_{0}^{\infty}(0,b)}= \int_{x_{k}}^{x_{k+1}%
}u(x)\mathbf{L}_{q}\phi(x)dx,
\end{equation}
because $\phi(x_{j})=0$ for $j=1, \dots, N$. From (\ref{auxprop1}) we obtain
\[
\int_{x_{k}}^{x_{k+1}}u(x)\phi^{\prime\prime}(x)dx= \int_{x_{k}}^{x_{k+1}%
}\{q(x)u(x)-g(x)\}\phi(x)dx.
\]
Set $v(x)=\int_{0}^{x}\int_{0}^{t}\{q(s)u(s)-g(s)\}dsdt$. Hence $v\in
W^{2,1}(x_{j},x_{j+1})$, $v^{\prime\prime}(x)=q(x)u(x)-g(x)$ a.e. $x\in
(x_{j},x_{j+1})$, and we get the equality
\begin{equation}
\label{auxiliareq}\int_{x_{k}}^{x_{k+1}}(u(x)-v(x))\phi^{\prime\prime}(x)dx=0
\quad\forall\phi\in C_{0}^{\infty}(x_{k}, x_{k+1}).
\end{equation}
Equality (\ref{auxiliareq}) implies that $u(x)=v(x)+Ax+B$ a.e. $x\in
(x_{k},x_{k+1})$ for some constants $A$ and $B$ (\cite[pp. 85]{vladimirov}).
In consequence $u\in W^{2,1}(x_{k}, x_{k+1})$ and
\begin{equation}
\label{aux2prop1}-u^{\prime\prime}(x)+q(x)u(x)= g(x) \quad\mbox{a.e. }
x\in(x_{k},x_{k+1}).
\end{equation}
Furthermore, $u\in C[x_{k},x_{k+1}]$, hence $qu\in L_{2}(x_{k},x_{k+1})$ and
then $u^{\prime\prime}=qu-g\in L_{2}(x_{k},x_{k+1})$. In this way
$u|_{(x_{k},x_{k+1})}\in H^{2}(x_{k},x_{k+1})$.

Now take $\varepsilon>0$ and an arbitrary $\phi\in C_{0}^{\infty}%
(\varepsilon,x_{1})$. We have that
\[
(\mathbf{L}_{q,\mathfrak{I}_{N}}u,\phi)_{C_{0}^{\infty}(0,b)}= \int
_{\varepsilon}^{x_{1}}\{-u(x)\phi^{\prime\prime}(x)+q(x)u(x)\phi
(x)\}dx=\int_{\varepsilon}^{x_{1}}g(x)\phi(x)dx.
\]
Applying the same procedure as in the previous case we obtain that $u\in
H^{2}(\varepsilon,x_{1})$ and satisfies Eq. (\ref{aux2prop1}) in the interval
$(\varepsilon,x_{1})$. Since $\varepsilon$ is arbitrary, we conclude that $u$
satisfies (\ref{aux2prop1}) for a.e. $x\in(0,x_{1})$. Since $q,g\in
L_{2}(0,x_{1})$, then $u|_{(0,x_{1})}\in H^{2}(0,x_{1})$ (see \cite[Th.
3.4]{zetl}). The proof for the interval $(x_{N},b)$ is analogous.

Since $u\in C^{1}[x_{k},x_{k+1}]$, $k=0,\dots, N$, the following equality is
valid (see formula (6) from \cite[pp. 100]{kanwal})
\begin{align}
\int_{0}^{b} u(x)\phi^{\prime\prime}(x)dx  &  = \sum_{k=1}^{N}\left\{
u^{\prime}(x_{k}+)-u^{\prime}(x_{k}-) \right\} \phi(x_{k})\label{aux3prop1}\\
&  \quad-\sum_{k=1}^{N}\left\{ u(x_{k}+)-u(x_{k}-) \right\} \phi^{\prime
}(x_{k}) +\int_{0}^{b} u^{\prime\prime}(x)\phi(x)dx, \qquad\forall\phi\in
C_{0}^{\infty}(0,b).\nonumber
\end{align}
Fix $k\in\{1, \cdots, N\}$ arbitrary and take $\varepsilon>0$ small enough
such that $(x_{k}-\varepsilon,x_{k}+\varepsilon)\subset(x_{k-1},x_{k+1})$.
Choose a cut-off function $\psi\in C_{0}^{\infty}(x_{k}-\varepsilon
,x_{k}+\varepsilon)$ satisfying $0\leqslant\psi\leqslant1$ on $(x_{k}%
-\varepsilon, x_{k}+\varepsilon)$ and $\psi(x)=1$ for $x\in(x_{k}%
-\frac{\varepsilon}{3}, x_{k}+\frac{\varepsilon}{3})$. 

\item By statement 1, it is enough to show that $u(x_{k}+)=u(x_{k}-)$.
Set\newline$\phi(x)=(x-x_{k})\psi(x)$, in such a way that  $\phi(x_{k})=0$ and
$\phi^{\prime}(x_{k})=1$. Hence
\[
(\mathbf{L}_{q,\mathfrak{I}_{N}}u, \phi)_{C_{0}^{\infty}(0,b)}= \int
_{x_{k}-\varepsilon}^{x_{k}+\varepsilon}u(x)\mathbf{L}_{q}\phi(x)dx.
\]
By (\ref{aux3prop1}) we have
\[
\int_{x_{k}-\varepsilon}^{x_{k}+\varepsilon}u(x)\phi^{\prime\prime}(x)dx =
u(x_{k}-)-u(x_{k}+)+\int_{x_{k}-\varepsilon}^{x_{k}+\varepsilon}%
u^{\prime\prime}(x)\phi(x)dx,
\]
because $\phi(x_{k})=0$ and $\phi^{\prime}(x_{k})=1$. Since $u$ satisfies
(\ref{aux0prop1}), we have
\[
\int_{x_{k}-\varepsilon}^{x_{k}+\varepsilon}(\mathbf{L}_{q}u(x)-g(x))\phi
(x)dx+u(x_{k}+)-u(x_{k}-)=0.
\]
By statement 1, $\mathbf{L}_{q}u=g$ on both intervals $(x_{k-1},x_{k})$,
$(x_{k}, x_{k+1})$. Then we obtain that $u(x_{k}+)-u(x_{k}-)$=0.

\item Now take $\psi$ as the test function. Hence
\[
(\mathbf{L}_{q,\mathfrak{I}_{N}}u,\psi)_{C_{0}^{\infty}(0,b)}= \int
_{x_{k}-\varepsilon}^{x_{k}+\varepsilon}u(x)\mathbf{L}_{q}\psi(x)dx+\alpha
_{k}u(x_{k}),
\]
because $\operatorname{Supp}(\psi)\subset(x_{k}-\varepsilon,x_{k}%
+\varepsilon)$ and $\psi\equiv1$ on $(x_{k}-\frac{\varepsilon}{3}, x_{k}%
+\frac{\varepsilon}{3})$. On the other hand, by (\ref{aux3prop1}) we obtain
\[
\int_{x_{k}-\varepsilon}^{x_{k}+\varepsilon}u(x)\psi^{\prime\prime}(x)dx =
u^{\prime}(x_{k}+)-u^{\prime}(x_{k}-)+\int_{x_{k}-\varepsilon}^{x_{k}%
+\varepsilon}u^{\prime\prime}(x)\psi(x)dx,
\]
because $\psi^{\prime}(x_{k})=0$. Thus, by (\ref{aux0prop1}) we have
\[
\int_{x_{k}-\varepsilon}^{x_{k}+\varepsilon}(\mathbf{L}_{q}u(x)-g(x))\psi
(x)dx+ u^{\prime}(x_{k}-)-u^{\prime}(x_{k}+)+\alpha_{k}u(x_{k})=0.
\]
Again, by statement 1, we obtain (\ref{jumpderivative}). 
\end{enumerate}

Reciprocally, if $u$ satisfies conditions 1,2 and 3, equality (\ref{aux3prop1}%
) implies (\ref{regulardist}). By condition 1, $\mathbf{L}_{q,\mathfrak{I}%
_{N}}u$ is $L_{2}$-regular.
\end{proof}

\begin{definition}
The $L_{2}$-\textbf{regularization domain} of $\mathbf{L}_{q, \mathfrak{I}%
_{N}}$, denoted by $\mathcal{D}_{2}\left( \mathbf{L}_{q, \mathfrak{I}_{N}%
}\right) $, is the set of all functions $u\in L_{2,loc}(0,b)$ satisfying
conditions 1,2 and 3 of Proposition \ref{propregular}.
\end{definition}

If $u\in L_{2,loc}(0,b)$ is a solution of (\ref{Schrwithdelta}), then
$\mathbf{L}_{q-\lambda,\mathfrak{I}_{N}}u$ equals the regular distribution
zero. Then we have the next characterization.

\begin{corollary}
\label{proppropofsolutions}  A function $u\in L_{2,loc}(0,b)$ is a solution of
Eq. (\ref{Schrwithdelta}) iff $u\in\mathcal{D}_{2}\left( \mathbf{L}_{q,
\mathfrak{I}_{N}}\right) $ and  for each $k=0, \dots, N$, the restriction
$u|_{(x_{k},x_{k+1})}$ is a solution of the regular Schr\"odinger equation
\begin{equation}
\label{schrodingerregular}-y^{\prime\prime}(x)+q(x)y(x)=\lambda y(x)
\quad\mbox{for  } x_{k}<x<x_{k+1}.
\end{equation}

\end{corollary}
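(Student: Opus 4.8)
The plan is to deduce the statement directly from Proposition \ref{propregular}, applied to the shifted potential $q-\lambda\in L_2(0,b)$, together with the elementary observation that the $L_2$-regularization domain does not depend on the potential: conditions 1, 2 and 3 of Proposition \ref{propregular} refer only to the local $H^2$-regularity of $u$ on the subintervals, to $u\in AC[0,b]$, and to the jumps of $u'$ prescribed by the numbers $\alpha_k$, so $\mathcal{D}_2(\mathbf{L}_{q-\lambda,\mathfrak{I}_N})=\mathcal{D}_2(\mathbf{L}_{q,\mathfrak{I}_N})$. The only point needing care is the bookkeeping of the spectral parameter: absorbing $-\lambda u$ into the potential converts the eigenvalue equation into the $L_2$-regularity statement with associated function $g\equiv0$. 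Indeed, for $u\in L_{2,loc}(0,b)$ (so that the values $u(x_k)$ are meaningful, as in the discussion preceding Proposition \ref{propregular}) and any $\phi\in C_0^\infty(0,b)$, the identity $\mathbf{L}_{q-\lambda}\phi=\mathbf{L}_q\phi-\lambda\phi$ gives
\[
(\mathbf{L}_{q-\lambda,\mathfrak{I}_N}u,\phi)_{C_0^\infty(0,b)}=(\mathbf{L}_{q,\mathfrak{I}_N}u,\phi)_{C_0^\infty(0,b)}-\lambda\int_0^b u(x)\phi(x)\,dx,
\]
so that $u$ is a solution of (\ref{Schrwithdelta}) if and only if $\mathbf{L}_{q-\lambda,\mathfrak{I}_N}u$ equals the regular distribution zero (this is exactly the remark made just before the corollary).

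For the forward direction I would argue as follows. If $u$ solves (\ref{Schrwithdelta}), then by the above $\mathbf{L}_{q-\lambda,\mathfrak{I}_N}u$ is $L_2$-regular with $g\equiv0$. Proposition \ref{propregular} (with potential $q-\lambda$) then yields that $u$ satisfies conditions 1, 2 and 3, i.e. $u\in\mathcal{D}_2(\mathbf{L}_{q,\mathfrak{I}_N})$, and moreover — by the pointwise equation obtained inside the proof of that proposition (the analogue of (\ref{aux2prop1}) with $g\equiv0$) — that $-u''+(q-\lambda)u=0$ a.e. on each $(x_k,x_{k+1})$; since $u|_{(x_k,x_{k+1})}\in H^2(x_k,x_{k+1})$, this says precisely that $u|_{(x_k,x_{k+1})}$ is a solution of (\ref{schrodingerregular}).

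For the converse, suppose $u\in\mathcal{D}_2(\mathbf{L}_{q,\mathfrak{I}_N})$ and each restriction $u|_{(x_k,x_{k+1})}$ solves (\ref{schrodingerregular}). Then $-u''+(q-\lambda)u=0$ a.e. on $(0,b)\setminus\{x_1,\dots,x_N\}$, and since $u$ satisfies conditions 1, 2, 3 (for the data $\mathfrak{I}_N$), the reciprocal part of Proposition \ref{propregular} applied to the potential $q-\lambda$ — namely formula (\ref{regulardist}) — gives $(\mathbf{L}_{q-\lambda,\mathfrak{I}_N}u,\phi)_{C_0^\infty(0,b)}=\int_0^b\bigl(-u''(x)+(q(x)-\lambda)u(x)\bigr)\phi(x)\,dx=0$ for every $\phi\in C_0^\infty(0,b)$; hence $\mathbf{L}_{q,\mathfrak{I}_N}u=\lambda u$ in $\mathscr{D}'(0,b)$, i.e. $u$ is a solution of (\ref{Schrwithdelta}). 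There is essentially no serious obstacle: the entire analytic content has already been carried out in Proposition \ref{propregular}, and what remains is only the routine verification that the shift $q\mapsto q-\lambda$ transports the eigenvalue problem to the $L_2$-regularity problem with $g\equiv0$ while leaving the regularization domain unchanged.
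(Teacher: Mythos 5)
Your proposal is correct and follows essentially the same route the paper intends: the corollary is stated immediately after the remark that a solution $u$ makes $\mathbf{L}_{q-\lambda,\mathfrak{I}_N}u$ the zero regular distribution, so the intended proof is precisely an application of Proposition \ref{propregular} with potential $q-\lambda$ and $g\equiv 0$, noting as you do that the conditions 1--3 (and hence $\mathcal{D}_2$) do not depend on the potential. Your bookkeeping of the spectral shift and the use of (\ref{regulardist}) in both directions are exactly what the paper leaves implicit.
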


\begin{remark}
\label{remarkidealdomain}  Let $f\in\mathcal{D}_{2}\left( \mathbf{L}%
_{q,\mathfrak{I}_{N}}\right) $. Given $g\in C^{1}[0,b]$, we have
\begin{align*}
(fg)^{\prime}(x_{k}+)-(fg)^{\prime}(x_{k}-) &  =f^{\prime}(x_{k}%
+)g(x_{k})+f(x_{k})g^{\prime}(x_{k}+)-f^{\prime}(x_{k}-)g(x_{k})-f(x_{k}%
)g^{\prime}(x_{k}-)\\
&  = \left[ f^{\prime}(x_{k}+)-f^{\prime}(x_{k}-)\right] g(x_{k}) = \alpha_{k}
f(x_{k})g(x_{k})
\end{align*}
for $k=1, \dots, N$. In particular, $fg\in\mathcal{D}_{2}\left( \mathbf{L}%
_{q,\mathfrak{I}_{N}}\right) $ for $g\in H^{2}(0,b)$.
\end{remark}

\begin{remark}
\label{remarkunicityofsol}  Let $u_{0}, u_{1}\in\mathbb{C}$. Consider the
Cauchy problem
\begin{equation}
\label{Cauchyproblem1}%
\begin{cases}
\mathbf{L}_{q, \mathfrak{I}_{N}}u(x) = \lambda u(x), \quad0<x<b,\\
u(0)=u_{0}, \; u^{\prime}(0)= u_{1}.
\end{cases}
\end{equation}
If the solution of the problem exists, it must be unique. It is enough to show
the assertion for $u_{0}=u_{1}=0$. Indeed, if $w$ is a solution of such
problem, by Corollary \ref{proppropofsolutions}, $w$ is a solution of
(\ref{schrodingerregular}) on $(0, x_{1})$ satisfying $w(0)=w^{\prime}(0)=0$.
Hence $w\equiv0$ on $[0,x_{1}]$. By the continuity of $w$ and condition
(\ref{jumpderivative}), we have $w(x_{1})=w^{\prime}(x_{1}-)=0$. Hence $w$ is
a solution of (\ref{schrodingerregular}) satisfying these homogeneous
conditions. Thus, $w\equiv0$ on $[x_{1}, x_{2}]$. By continuing the process
until the points $x_{k}$ are exhausted, we arrive at the solution $w\equiv0$
on the whole segment $[0,b]$.

The uniqueness of the Cauchy problem with conditions $u(b)=u_{0}$, $u^{\prime
}(b)=u_{1}$ is proved in a similar way.
\end{remark}

\begin{remark}
Suppose that $u_{0}=u_{0}(\lambda)$ and $u_{1}=u_{1}(\lambda)$ are entire
functions of $\lambda$ and denote by $u(\lambda,x)$ the corresponding unique
solution of (\ref{Cauchyproblem1}). Since $u$ is the solution of the Cauchy
problem $\mathbf{L}_{q}u=\lambda u$ on $(0,x_{1})$ with the initial conditions
$u(\lambda,0)=u_{1}(\lambda)$, $u^{\prime}(\lambda,0)=u_{1}(\lambda)$, both
$u(\lambda,x)$ and $u^{\prime}(\lambda,x+)$ are entire functions for any
$x\in[0,x_{1}]$ (this is a consequence of \cite[Th. 3.9]{zetl} and \cite[Th.
7]{blancarte}). Hence $u^{\prime}(\lambda,x_{1}-)=u^{\prime}(\lambda
,x_{1}+)-\alpha_{1}u(\lambda,x_{1})$ is entire in $\lambda$. Since $u$ is the
solution of the Cauchy problem $\mathbf{L}_{q}u=\lambda u$ on $(x_{1},x_{2})$
with initial conditions $u(\lambda,x_{1})$ and $u^{\prime}(\lambda,x_{1}+)$,
we have that $u(\lambda,x)$ and $u^{\prime}(\lambda,x+)$ are entire functions
for $x\in[x_{1},x_{2}]$. By continuing the process we prove this assertion for
all $x\in[0,b]$.
\end{remark}

\section{Closed form solution}

In what follows, denote the square root of $\lambda$ by $\rho$, so
$\lambda=\rho^{2}$, $\rho\in\mathbb{C}$. For each $k\in\{1, \cdots, N\}$ let
$\widehat{s}_{k}(\rho,x)$ be the unique solution of the Cauchy problem
\begin{equation}%
\begin{cases}
-\widehat{s}_{k}^{\prime\prime}(\rho,x)+q(x+x_{k})\widehat{s}_{k}(\rho
,x)=\rho^{2}\widehat{s}_{k}(\rho, x) \quad\mbox{ for   } 0<x<b-x_{k},\\
\widehat{s}_{k}(\rho,0)=0, \; \widehat{s}_{k}^{\prime}(\rho, 0)=1.
\end{cases}
\end{equation}
In this way, $\widehat{s}_{k}(\rho, x-x_{k})$ is a solution of $\mathbf{L}%
_{q}u=\rho^{2} u$ on $(x_{k},b)$ with initial conditions $u(x_{k})=0$,
$u^{\prime}(x_{k})=1$. According to \cite[Ch. 3, Sec. 6.3]{vladimirov},
$(\mathbf{L}_{q}-\rho^{2})\left( H(x-x_{k})\widehat{s}_{k}(\rho,
x-x_{k})\right) =-\delta(x-x_{k})$ for $x_{k}<x<b$. \newline

We denote by $\mathcal{J}_{N}$ the set of finite sequences $J=(j_{1}, \dots,
j_{l})$ with $1<l\leqslant N$, $\{j_{1}, \dots, j_{l}\}\subset\{1, \dots, N\}$
and $j_{1}<\cdots<j_{l}$. Given $J\in\mathcal{J}_{N}$, the length of $J$ is
denoted by $|J|$ and we define $\alpha_{J}:= \alpha_{j_{1}}\cdots
\alpha_{j_{|J|}}$.

\begin{theorem}
\label{TheoremSolCauchy}  Given $u_{0}, u_{1}\in\mathbb{C}$, the unique
solution $\displaystyle u_{\mathfrak{I}_{N}}\in\mathcal{D}_{2}\left(
\mathbf{L}_{q,\mathfrak{I}_{N}}\right) $ of the Cauchy problem
(\ref{Cauchyproblem1}) has the form
\begin{align}
u_{\mathfrak{I}_{N}}(\rho,x)  &  = \widetilde{u}(\rho,x)+ \sum_{k=1}^{N}%
\alpha_{k}\widetilde{u}(\rho,x_{k})H(x-x_{k})\widehat{s}_{k}(\rho
,x-x_{k})\nonumber\\
& \;\;\;\; +\sum_{J\in\mathcal{J}_{N}}\alpha_{J} H(x-x_{j_{|J|}})
\widetilde{u}(\rho,x_{j_{1}}) \left( \prod_{l=1}^{|J|-1}\widehat{s}_{j_{l}%
}(\rho,x_{j_{l+1}}-x_{j_{l}})\right) \widehat{s}_{j_{|J|}}(\rho, x-x_{j_{|J|}%
}),\label{generalsolcauchy}%
\end{align}
where $\widetilde{u}(\rho,x)$ is the unique solution of the regular
Schr\"odinger equation
\begin{equation}
\label{regularSch}\mathbf{L}_{q}\widetilde{u}(\rho,x)= \rho^{2}\widetilde
{u}(\rho,x), \quad0<x<b,
\end{equation}
satisfying the initial conditions $\widetilde{u}(\rho,0)=u_{1}, \;
\widetilde{u}^{\prime}(\rho,0)=u_{1}$. 
\end{theorem}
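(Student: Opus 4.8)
The plan is to exploit the uniqueness already established in Remark \ref{remarkunicityofsol}: since the Cauchy problem (\ref{Cauchyproblem1}) has at most one solution, it suffices to check that the function $u_{\mathfrak{I}_{N}}$ defined by the right-hand side of (\ref{generalsolcauchy}) solves (\ref{Schrwithdelta}) in $\mathscr{D}'(0,b)$ and meets the initial conditions $u_{\mathfrak{I}_{N}}(\rho,0)=u_{0}$, $u_{\mathfrak{I}_{N}}'(\rho,0)=u_{1}$; Corollary \ref{proppropofsolutions} will then automatically place it in $\mathcal{D}_{2}(\mathbf{L}_{q,\mathfrak{I}_{N}})$. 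First I would observe that $u_{\mathfrak{I}_{N}}(\rho,\cdot)$ is a finite sum of functions continuous on $[0,b]$: each $H(x-x_{k})\widehat{s}_{k}(\rho,x-x_{k})$ is continuous at $x_{k}$ because $\widehat{s}_{k}(\rho,0)=0$, and $\widetilde{u}(\rho,\cdot)\in C^{1}[0,b]$. Hence $u_{\mathfrak{I}_{N}}(\rho,\cdot)\in C[0,b]$, the values $u_{\mathfrak{I}_{N}}(\rho,x_{k})$ are well defined, and $\mathbf{L}_{q,\mathfrak{I}_{N}}u_{\mathfrak{I}_{N}}$ is a legitimate element of $\mathscr{D}'(0,b)$.

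The main step is to regroup the terms of (\ref{generalsolcauchy}) according to their largest index. Setting, for $k=1,\dots,N$,
\[
c_{k}(\rho):=\alpha_{k}\widetilde{u}(\rho,x_{k})+\sum_{\substack{J\in\mathcal{J}_{N}\\ j_{|J|}=k}}\alpha_{J}\,\widetilde{u}(\rho,x_{j_{1}})\prod_{l=1}^{|J|-1}\widehat{s}_{j_{l}}(\rho,x_{j_{l+1}}-x_{j_{l}}),
\]
formula (\ref{generalsolcauchy}) becomes $u_{\mathfrak{I}_{N}}(\rho,x)=\widetilde{u}(\rho,x)+\sum_{k=1}^{N}c_{k}(\rho)\,H(x-x_{k})\widehat{s}_{k}(\rho,x-x_{k})$. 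I would then prove the triangular identity
\[
c_{k}(\rho)=\alpha_{k}\Bigl(\widetilde{u}(\rho,x_{k})+\sum_{j<k}c_{j}(\rho)\,\widehat{s}_{j}(\rho,x_{k}-x_{j})\Bigr),\qquad k=1,\dots,N,
\]
by noting that each $J\in\mathcal{J}_{N}$ with $j_{|J|}=k$ is obtained from a shorter chain $J'$ ending at some $j<k$ by appending the index $k$, the singleton $(k)$ accounting for the term $\alpha_{k}\widetilde{u}(\rho,x_{k})$; appending $k$ multiplies the contribution of $J'$ by exactly $\alpha_{k}\widehat{s}_{j}(\rho,x_{k}-x_{j})$, since $\alpha_{J}=\alpha_{J'}\alpha_{k}$ and the telescoping product $\prod_{l}\widehat{s}_{j_{l}}$ turns the residual data of $J'$ into the corresponding contribution to $c_{j}(\rho)$. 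On the other hand, evaluating the regrouped form at $x_{k}$ annihilates every summand with $j\geq k$ (because $H(x_{k}-x_{j})=0$ for $j>k$ and $\widehat{s}_{k}(\rho,0)=0$ for $j=k$), so $u_{\mathfrak{I}_{N}}(\rho,x_{k})=\widetilde{u}(\rho,x_{k})+\sum_{j<k}c_{j}(\rho)\widehat{s}_{j}(\rho,x_{k}-x_{j})$; comparison with the triangular identity yields the key relation $c_{k}(\rho)=\alpha_{k}\,u_{\mathfrak{I}_{N}}(\rho,x_{k})$.

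Next I would apply $\mathbf{L}_{q}-\rho^{2}$ to the regrouped form. Since $\widetilde{u}$ solves (\ref{regularSch}) and, as recalled just before the statement, $(\mathbf{L}_{q}-\rho^{2})\bigl(H(x-x_{k})\widehat{s}_{k}(\rho,x-x_{k})\bigr)=-\delta(x-x_{k})$ in $\mathscr{D}'(0,b)$, linearity gives
\[
(\mathbf{L}_{q}-\rho^{2})u_{\mathfrak{I}_{N}}=-\sum_{k=1}^{N}c_{k}(\rho)\,\delta(x-x_{k})=-\sum_{k=1}^{N}\alpha_{k}\,u_{\mathfrak{I}_{N}}(\rho,x_{k})\,\delta(x-x_{k})=-q_{\delta,\mathfrak{I}_{N}}\,u_{\mathfrak{I}_{N}},
\]
where the last equality uses the continuity of $u_{\mathfrak{I}_{N}}$, so that $\delta(x-x_{k})u_{\mathfrak{I}_{N}}=u_{\mathfrak{I}_{N}}(\rho,x_{k})\delta(x-x_{k})$. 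Equivalently $\mathbf{L}_{q,\mathfrak{I}_{N}}u_{\mathfrak{I}_{N}}=\rho^{2}u_{\mathfrak{I}_{N}}=\lambda u_{\mathfrak{I}_{N}}$ in $\mathscr{D}'(0,b)$, i.e. $u_{\mathfrak{I}_{N}}$ solves (\ref{Schrwithdelta}), and Corollary \ref{proppropofsolutions} then gives $u_{\mathfrak{I}_{N}}\in\mathcal{D}_{2}(\mathbf{L}_{q,\mathfrak{I}_{N}})$. Finally, on $[0,x_{1})$ every factor $H(x-x_{k})$ vanishes, so $u_{\mathfrak{I}_{N}}(\rho,x)=\widetilde{u}(\rho,x)$ there, whence the required initial conditions hold; uniqueness is Remark \ref{remarkunicityofsol}.

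I expect the only genuinely delicate point to be the combinatorial identity for $c_{k}(\rho)$: one must verify carefully that $J'\mapsto(J',k)$ is a bijection from chains in $\mathcal{J}_{N}$ ending strictly below $k$ (augmented by the singleton/empty case) onto the chains ending at $k$, and that under it $\alpha_{J}=\alpha_{J'}\alpha_{k}$ while the $\widehat{s}$-product factors split as claimed. Everything else is finite-sum bookkeeping resting on two structural facts — $\widetilde{u}$ solves the unperturbed equation, and $H(\cdot-x_{k})\widehat{s}_{k}(\rho,\cdot-x_{k})$ is continuous with a unit jump of its first derivative at $x_{k}$. An alternative that sidesteps the explicit regrouping is induction on $N$: by uniqueness the $\mathfrak{I}_{N}$-solution agrees on $(0,x_{N})$ with the $\mathfrak{I}_{N-1}$-solution, adding the single correction $\alpha_{N}\,u_{\mathfrak{I}_{N-1}}(\rho,x_{N})H(x-x_{N})\widehat{s}_{N}(\rho,x-x_{N})$ restores the jump at $x_{N}$, and expanding $u_{\mathfrak{I}_{N-1}}(\rho,x_{N})$ via the inductive formula reproduces exactly the terms of (\ref{generalsolcauchy}) whose index set has largest element $x_{N}$.
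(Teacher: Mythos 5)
Your proof is correct, and it organizes the argument differently from the paper. The paper proceeds by induction on $N$: it writes $u_{\mathfrak{I}_{N+1}}=u_{\mathfrak{I}_{N}}+H(x-x_{N+1})f_{N}(\rho,x)$ and only needs the combinatorial decomposition of chains at the single topmost point, recognizing that the coefficient of $\widehat{s}_{N+1}(\rho,x-x_{N+1})$ collapses to $\alpha_{N+1}u_{\mathfrak{I}_{N}}(\rho,x_{N+1})$ by the induction hypothesis; this is precisely the ``alternative'' you sketch in your final sentence. Your main argument instead verifies the formula in one pass: you regroup all terms by their largest index, set up the triangular identity $c_{k}(\rho)=\alpha_{k}\bigl(\widetilde{u}(\rho,x_{k})+\sum_{j<k}c_{j}(\rho)\widehat{s}_{j}(\rho,x_{k}-x_{j})\bigr)$, and deduce $c_{k}(\rho)=\alpha_{k}u_{\mathfrak{I}_{N}}(\rho,x_{k})$ for every $k$ simultaneously by evaluating the regrouped sum at $x_{k}$. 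The underlying combinatorial fact (appending $k$ is a bijection from chains ending strictly below $k$, together with the empty chain, onto chains ending at $k$, with $\alpha_{J}=\alpha_{J'}\alpha_{k}$ and the $\widehat{s}$-product splitting off its last factor) is the same in both proofs; you simply invoke it at every $k$ rather than only at the last interaction point. What your version buys is a closed, non-recursive identification of all the jump coefficients at once and an explicit display of the triangular structure of the system; what the paper's induction buys is lighter bookkeeping, since at each step only one new $\delta$-term has to be accounted for. Both proofs rest on the same two structural inputs — $(\mathbf{L}_{q}-\rho^{2})\bigl(H(x-x_{k})\widehat{s}_{k}(\rho,x-x_{k})\bigr)=-\delta(x-x_{k})$ and the uniqueness of Remark \ref{remarkunicityofsol} — and your appeal to Corollary \ref{proppropofsolutions} to place the solution in $\mathcal{D}_{2}\left(\mathbf{L}_{q,\mathfrak{I}_{N}}\right)$ is legitimate. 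One cosmetic remark: the initial condition $\widetilde{u}(\rho,0)=u_{1}$ in the theorem statement is a typo in the paper for $\widetilde{u}(\rho,0)=u_{0}$, which is the reading you correctly adopt when checking $u_{\mathfrak{I}_{N}}(\rho,0)=u_{0}$.
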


\begin{proof}
The proof is by induction on $N$. For $N=1$, the proposed solution has the
form
\[
u_{\mathfrak{I}_{1}}(\rho,x)=\widetilde{u}(\rho,x)+\alpha_{1}H(x-x_{1}%
)\widetilde{u}(\rho,x_{1})\widehat{s}_{1}(\rho,x-x_{1}).
\]
Note that $u_{\mathfrak{I}_{1}}(\rho,x)$ is continuous, and $u_{\mathfrak{I}%
_{1}}(\rho,x_{1})=\widetilde{u}(\rho,x_{1})$. Hence
\[
(\mathbf{L}_{q}-\rho^{2})u_{\mathfrak{I}_{1}}(\rho,x)= \alpha_{1}\widetilde
{u}(\rho,x_{1})(\mathbf{L}_{q}-\rho^{2})\left( H(x-x_{1})\widehat{s}_{1}%
(\rho,x-x_{1})\right) = -\alpha_{1}\widetilde{u}(\rho,x_{1})\delta(x-x_{1}),
\]
that is, $u_{\mathfrak{I}_{1}}(\rho,x)$ is a solution of (\ref{Schrwithdelta})
with $N=1$. Suppose the result is valid for $N$. Let $u_{\mathfrak{I}_{N+1}%
}(\rho,x)$ be the proposed solution given by formula (\ref{generalsolcauchy}).
It is clear that $u_{\mathfrak{I}_{N+1}}(\rho, \cdot)|_{(x_{k},x_{k+1})}\in
H^{2}(x_{k},x_{k+1})$, $k=0, \cdots, N$, $u_{\mathfrak{I}_{N+1}}(\rho,x)$ is a
solution of (\ref{schrodingerregular}) on each interval $(x_{k},x_{k+1})$,
$k=0, \dots, N+1$, and $u_{\mathfrak{I}_{N+1}}^{(j)}(\rho,0)= \widetilde
{u}^{(j)}(\rho,0)=u_{j}$, $j=0,1$. Furthermore, we can write
\[
u_{\mathfrak{I}_{N+1}}(\rho,x)=u_{\mathfrak{I}_{N}}(\rho,x)+H(x-x_{N+1}%
)f_{N}(\rho,x),
\]
where $\mathfrak{I}_{N}= \mathfrak{I}_{N+1}\setminus\{(x_{N+1}, \alpha
_{N+1})\}$, $u_{\mathfrak{J}_{N}}(\rho,x)$ is the proposed solution for the
interactions $\mathfrak{I}_{N}$, and the function $f_{N}(\rho,x)$ is given by
\begin{align*}
f_{N}(\rho,x)  &  = \alpha_{N+1}\widetilde{u}(\rho,x_{N+1})\widehat{s}%
_{N+1}(x-x_{N+1})\\
&  \quad+\sum_{\overset{J\in\mathcal{J}_{N+1}}{j_{|J|}=N+1}}\alpha_{J}
\widetilde{u}(\rho,x_{j_{1}}) \left( \prod_{l=1}^{|J|-1}\widehat{s}_{j_{l}%
}(\rho,x_{j_{l+1}}-x_{j_{l}})\right) \widehat{s}_{N+1}(\rho, x-x_{N+1}),
\end{align*}
where the sum is taken over all the sequences $J=(j_{1}, \dots, j_{|J|}%
)\in\mathcal{J}_{N}$ with $j_{|J|}=N+1$. From this representation we obtain
$u_{\mathfrak{I}_{N+1}}(\rho,x_{N+1}\pm)=u_{\mathfrak{I}_{N}}(\rho,x_{N+1})$
and hence $u_{\mathfrak{I}_{N+1}}\in AC[0,b]$. By the induction hypothesis,
$u_{\mathfrak{I}_{N}}(\rho,x)$ is the solution of (\ref{Schrwithdelta}) for
$N$, then in order to show that $u_{\mathfrak{I}_{N+1}}(\rho,x)$ is the
solution for $N+1$ it is enough to show that $(\mathbf{L}_{q}-\rho^{2})\hat
{f}_{N}(\rho,x)=-\alpha_{N}u_{N}(x_{N+1})\delta(x-x_{N+1})$, where $\hat
{f}_{N}(\rho,x)=H(x-x_{N+1})f_{N}(\rho,x)$. Indeed, we have
\begin{align*}
(\rho^{2}-\mathbf{L}_{q})\hat{f}_{N}(\rho,x)  &  = \alpha_{N+1}\widetilde
{u}(\rho,x_{N+1})\delta(x-x_{N+1})+\\
&  \quad\; +\sum_{\overset{J\in\mathcal{J}_{N+1}}{j_{|J|}=N+1}}\alpha_{J}
\widetilde{u}(\rho,x_{j_{1}}) \left( \prod_{l=1}^{|J|-1}\widehat{s}_{j_{l}%
}(\rho,x_{j_{l+1}}-x_{j_{l}})\right) \delta(x-x_{N+1})\\
& = \alpha_{N+1}\delta(x-x_{N+1})\Bigg{[} \widetilde{u}(\rho,x_{N+1}%
)+\sum_{k=1}^{N}\alpha_{k}\widetilde{u}(\rho,x_{N+1})\widehat{s}_{k}(\rho,
x_{N+1}-x_{k})\\
&  \quad\;\;\,+ \sum_{J\in\mathcal{J}_{N}}\alpha_{J}\widetilde{u}%
(\rho,x_{j_{1}})\left( \prod_{l=1}^{|J|-1}\widehat{s}_{j_{l}}(\rho,x_{j_{l+1}%
}-x_{j_{l}})\right) \widehat{s}_{j_{|J|}}(\rho, x_{N+1}-x_{j_{|J|}})
\Bigg{]}\\
& = \alpha_{N+1}u_{\mathfrak{I}_{N}}(\rho,x_{N+1})\delta(x-x_{N+1}%
)=\alpha_{N+1}u_{\mathfrak{I}_{N+1}}(\rho,x_{N+1})\delta(x-x_{N+1}),
\end{align*}
where the second equality is due to the fact that
\[
\{J\in\mathcal{J}_{N+1}\,|\, j_{|J|}=N+1 \} = \{(J^{\prime},N+1)\, |\,
J^{\prime}\in\mathcal{J}_{N} \}\cup\{(j,N+1) \}_{j=1}^{N}.
\]
Hence $u_{\mathfrak{I}_{N+1}}(\rho,x)$ is the solution of the Cauchy problem. 
\end{proof}

\begin{example}
Consider the case $q\equiv0$. Denote by $e_{\mathfrak{I}_{N}}^{0}(\rho,x)$ the
unique solution of
\begin{equation}
\label{Deltadiracpotentialexample}-y^{\prime\prime}+\left( \sum_{k=1}%
^{N}\alpha_{k} \delta(x-x_{k})\right) y =\rho^{2} y, \quad0<x<b,
\end{equation}
satisfying $e_{\mathfrak{I}_{N}}^{0}(\rho, 0)=1$, $e_{\mathfrak{I}_{N}}%
^{0}(\rho, 0)=i\rho$. In this case we have $\widehat{s}_{k}(\rho,x)=\frac
{\sin(\rho x)}{\rho}$ for $k=1,\dots, N$. Hence, according to Theorem
\ref{TheoremSolCauchy}, the solution $e_{\mathfrak{I}_{N}}^{0}(\rho,x)$ has
the form
\begin{align}
e_{\mathfrak{I}_{N}}^{0}(\rho,x)  &  =e^{i\rho x}+\sum_{k=1}^{N}\alpha
_{k}e^{i\rho x_{k}}H(x-x_{k})\frac{\sin(\rho(x-x_{k}))}{\rho}\nonumber\\
&  \quad+\sum_{J\in\mathcal{J}_{N}}\alpha_{J} H(x-x_{j_{|J|}})e^{i\rho
x_{j_{1}}}\left( \prod_{l=1}^{|J|-1}\frac{\sin(\rho(x_{j_{l+1}}-x_{j_{l}}%
))}{\rho}\right) \frac{\sin(\rho(x-x_{j_{|J|}}))}{\rho}.\label{SolDiracDeltae}%
\end{align}

\end{example}

\section{Transmutation operators}

\subsection{Construction of the integral transmutation kernel}

Let $h\in\mathbb{C}$. Denote by $\widetilde{e}_{h}(\rho,x)$ the unique
solution of Eq. (\ref{regularSch}) satisfying $\widetilde{e}_{h}(\rho,0)=1$,
$\widetilde{e}^{\prime}_{h}(\rho, 0)=i\rho+h$. Hence the unique solution
$e_{\mathfrak{I}_{N}}^{h}(\rho,x)$ of Eq. (\ref{Schrwithdelta}) satisfying
$e_{\mathfrak{I}_{N}}^{h}(\rho,0)=1$, $(e_{\mathfrak{I}_{N}}^{h})^{\prime
}(\rho,0)=i\rho+h$ is given by
\begin{align}
e_{\mathfrak{I}_{N}}^{h}(\rho,x)  &  =\widetilde{e}_{h}(\rho,x)+\sum_{k=1}%
^{N}\alpha_{k}\widetilde{e}_{h}(\rho, x_{k})H(x-x_{k})\widehat{s}_{k}(\rho,
x-x_{k})\label{SoleGral}\\
&  \quad+\sum_{J\in\mathcal{J}_{N}}\alpha_{J} H(x-x_{j_{|J|}})\widetilde
{e}_{h}(\rho,x_{j_{1}})\left( \prod_{l=1}^{|J|-1}\widehat{s}_{j_{l}}%
(\rho,x_{j_{l+1}}-x_{j_{l}})\right) \widehat{s}_{j_{|J|}}(\rho,x-x_{j_{|J|}%
}).\nonumber
\end{align}

It is known that there exists a kernel $\widetilde{K}^{h}\in C(\overline
{\Omega})\cap H^{1}(\Omega)$, where $\Omega=\{(x,t)\in\mathbb{R}^{2}\, |\,
0<x<b, |t|<x\}$, such that $\widetilde{K}^{h}(x,x)=\frac{h}{2}+\frac{1}{2}%
\int_{0}^{x}q(s)ds$, $\widetilde{K}^{h}(x,-x)=\frac{h}{2}$ and
\begin{equation}
\label{transm1}\widetilde{e}_{h}(\rho,x)=e^{i\rho x}+\int_{-x}^{x}%
\widetilde{K}^{h}(x,t)e^{i\rho t}dt
\end{equation}
(see, e.g., \cite{levitan, marchenko}). Actually, $\widetilde{K}^{h}%
(x,\cdot)\in L_{2}(-x,x)$ and it can be extended (as a function of $t$) to a
function in $L_{2}(\mathbb{R})$ with a support in $[-x,x]$. For each $k\in\{1,
\dots, N\}$ there exists a kernel $\widehat{H}_{k}\in C(\overline{\Omega_{k}%
})\cap H^{1}(\Omega_{k})$ with $\Omega_{k}=\{(x,t)\in\mathbb{R}^{2}\,|\,
0<x<b-x_{k}, \; |t|\leqslant x\}$, and $\widehat{H}_{k}(x,x)=\frac{1}{2}%
\int_{x_{k}}^{x+x_{k}}q(s)ds$, $\widehat{H}_{k}(x,-x)=0$, such that
\begin{equation}
\label{representationsinegeneral1}\widehat{s}_{k}(\rho,x)=\frac{\sin(\rho
x)}{\rho}+\int_{0}^{x}\widehat{H}_{k}(x,t)\frac{\sin(\rho t)}{\rho}dt
\end{equation}
(see \cite[Ch. 1]{yurko}). From this we obtain the representation
\begin{equation}
\label{representationsinegeneral2}\widehat{s}_{k}(\rho,x-x_{k})=\frac
{\sin(\rho(x-x_{k}))}{\rho}+\int_{0}^{x-x_{k}}\widehat{H}_{k}(x-x_{k}%
,t)\frac{\sin(\rho t)}{\rho}dt=\int_{-(x-x_{k})}^{x-x_{k}}\widetilde{K}%
_{k}(x,t)e^{i\rho t}dt,
\end{equation}
where
\begin{equation}
\label{kernelauxiliarsine}\widetilde{K}_{k}(x,t)=\frac{1}{2}\chi_{x-x_{k}%
}(t)+\displaystyle \frac{1}{2}\int_{|t|}^{x-x_{k}}\widehat{H}_{k}%
(x-x_{k},s)ds.
\end{equation}

We denote the Fourier transform of a function $f\in L_{1}(\mathbb{R})$ by
$\mathcal{F}f(\rho)=\int_{\mathbb{R}}f(t)e^{i\rho t}dt$ and the convolution of
$f$ with a function $g\in L_{1}(\mathbb{R})$ by $f\ast g(t)= \int_{\mathbb{R}%
}f(t-s)g(s)ds$. We recall that $\mathcal{F}(f\ast g)(\rho)=\mathcal{F}%
f(\rho)\cdot\mathcal{F}g(\rho)$. Given $f_{1}, \dots, f_{M} \in
L_{2}(\mathbb{R})$ with compact support, we denote their convolution product
by $\left( \prod_{l=1}^{M}\right) ^{\ast}f_{l}(t):= (f_{1}\ast\cdots\ast
f_{M})(t)$. For the kernels $\widetilde{K}^{h}(x,t), \widetilde{K}_{k}(x,t)$,
the operations $\mathcal{F}$ and $\ast$ will be applied with respect to the
variable $t$.

\begin{lemma}
\label{lemaconv}  Let $A,B>0$. If $f\in C[-A,A]$ and $g\in C[-B,B]$, then
$(\chi_{A} f)\ast(\chi_{B} g)\in C(\mathbb{R})$ with $\operatorname{Supp}%
\left( (\chi_{A} f)\ast(\chi_{B} g)\right) \subset[-(A+B),A+B]$.
\end{lemma}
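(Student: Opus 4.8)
The plan is to reduce the statement about the convolution of two truncated continuous functions to two separate assertions: the support containment and the continuity. Both are elementary, but the order matters because knowing the support first makes the continuity argument cleaner.

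First I would establish the support condition. Write $F=(\chi_A f)\ast(\chi_B g)$, so that
\[
F(t)=\int_{\mathbb{R}}(\chi_A f)(t-s)(\chi_B g)(s)\,ds=\int_{-B}^{B}(\chi_A f)(t-s)g(s)\,ds.
\]
For the integrand to be nonzero at a given $t$ we need some $s$ with $|s|\leqslant B$ and $|t-s|\leqslant A$ simultaneously; by the triangle inequality $|t|\leqslant|t-s|+|s|\leqslant A+B$. Hence $F(t)=0$ whenever $|t|>A+B$, which gives $\operatorname{Supp}(F)\subset[-(A+B),A+B]$. (Since $\chi_A f$ and $\chi_B g$ both lie in $L_1(\mathbb{R})\cap L_2(\mathbb{R})$, the convolution is well defined and, being an $L_1\ast L_1$ convolution, lies in $L_1(\mathbb{R})$; this is enough to make the pointwise formula above meaningful for a.e.\ $t$, and then the continuity argument upgrades it to every $t$.)

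Next I would prove continuity. The cleanest route is uniform continuity via the $L_1$-continuity of translation. Fix $t_0\in\mathbb{R}$ and $t$ near $t_0$; then
\[
|F(t)-F(t_0)|\leqslant\int_{-B}^{B}\bigl|(\chi_A f)(t-s)-(\chi_A f)(t_0-s)\bigr|\,|g(s)|\,ds
\leqslant\|g\|_{\infty,[-B,B]}\,\bigl\|\tau_{t}(\chi_A f)-\tau_{t_0}(\chi_A f)\bigr\|_{L_1(\mathbb{R})},
\]
where $\tau_a$ denotes translation by $a$. Since $\chi_A f\in L_1(\mathbb{R})$, translation is continuous in $L_1$-norm, so the right-hand side tends to $0$ as $t\to t_0$; this shows $F\in C(\mathbb{R})$ (in fact uniformly continuous). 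Alternatively, one can avoid invoking $L_1$-continuity of translation and argue by hand: extend $f$ to a continuous function on $\mathbb{R}$ agreeing with $f$ on $[-A,A]$ and supported in a slightly larger compact set, so $\chi_A f$ differs from a compactly supported continuous function only at the two jump points $\pm A$, which have measure zero; then decompose the difference $(\chi_A f)(t-s)-(\chi_A f)(t_0-s)$ into a genuinely continuous part (handled by uniform continuity of $f$ on a compact set) plus a part localized near $s=t\mp A$ and $s=t_0\mp A$ whose $L_1$-norm is controlled by the length $|t-t_0|$ of these small intervals times $2\|f\|_\infty$. Combining with the support bound from the first step completes the proof.

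\textbf{Main obstacle.} There is no deep obstacle here; the only point requiring a little care is that $\chi_A f$ is merely in $L_1\cap L_\infty$ and not continuous (because of the jumps at $\pm A$), so one cannot simply cite ``convolution of two continuous functions is continuous.'' The fix, as indicated above, is either to quote the continuity of translation in $L_1(\mathbb{R})$ or to absorb the jump discontinuities into an error term of size $O(|t-t_0|)$. I would state the lemma's proof using the translation-continuity route, since it is the shortest and the paper will in any case be working with $L_2$-functions of compact support where this machinery is standard.
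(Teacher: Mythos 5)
Your proof is correct and follows essentially the same route as the paper, which simply cites \cite[Prop. 4.18]{brezis} for the support containment and \cite[Prop. 8.8]{folland} (continuity of an $L_1\ast L_\infty$ convolution) for the continuity; your argument just supplies the standard proofs of those two cited facts, namely the triangle-inequality support estimate and the $L_1$-continuity of translation. Your remark that one cannot naively invoke ``convolution of continuous functions is continuous'' because of the jumps of $\chi_A f$ at $\pm A$ is exactly the point the paper handles by working in the $L_1\ast L_\infty$ framework.
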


\begin{proof}
The assertion $\operatorname{Supp}\left( (\chi_{A} f)\ast(\chi_{B} g)\right)
\subset[-(A+B),A+B]$ is due to \cite[Prop. 4.18]{brezis}. Since $(\chi_{A}
f)\in L_{1}(\mathbb{R})$ and $(\chi_{B} g)\in L_{\infty}(\mathbb{R})$, it
follows from \cite[Prop. 8.8]{folland} that $(\chi_{A} f)\ast(\chi_{B} g)\in
C(\mathbb{R})$.
\end{proof}

\begin{theorem}
\label{thoremtransmoperator}  There exists a kernel $K_{\mathfrak{I}_{N}}%
^{h}(x,t)$ defined on $\Omega$ such that
\begin{equation}
\label{transmutationgeneral}e_{\mathfrak{I}_{N}}^{h}(\rho,x)=e^{i\rho x}%
+\int_{-x}^{x}K_{\mathfrak{I}_{N}}^{h}(x,t)e^{i\rho t}dt.
\end{equation}
For any $0<x\leqslant b$, $K_{\mathfrak{J}_{N}}^{h}(x,t)$ is piecewise
absolutely continuous with respect to the variable $t\in[-x,x]$ and satisfies
$K_{\mathfrak{I}_{N}}^{h}(x,\cdot)\in L_{2}(-x,x)$. Furthermore,
$K_{\mathfrak{I}_{N}}^{h}\in L_{\infty}(\Omega)$.
\end{theorem}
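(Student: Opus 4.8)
The strategy is to express $e_{\mathfrak{I}_N}^h(\rho,x)$ term by term using the closed form \eqref{SoleGral} and the known kernel representations \eqref{transm1} and \eqref{representationsinegeneral2}, then collect everything into a single Volterra integral. First I would substitute \eqref{transm1} for $\widetilde{e}_h(\rho,\cdot)$ and \eqref{representationsinegeneral2} for each $\widehat{s}_{j_l}(\rho,x_{j_{l+1}}-x_{j_l})$ and $\widehat{s}_{j_{|J|}}(\rho,x-x_{j_{|J|}})$ appearing in \eqref{SoleGral}. Each summand then becomes a product of the form $\bigl(\text{scalar}\bigr)\cdot\int e^{i\rho t}\,\widetilde K(\cdot,t)\,dt$ (for the scalars $\widetilde{e}_h(\rho,x_{j_1})$ I note that $\widetilde K^h(x_{j_1},\cdot)$ is supported in $[-x_{j_1},x_{j_1}]$, and similarly each $\widetilde K_k(x,\cdot)$ from \eqref{kernelauxiliarsine} is supported in $[-(x-x_k),x-x_k]$). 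The point is that a product $e^{i\rho x_{j_1}}\widehat s_{j_1}(\cdots)\widehat s_{j_2}(\cdots)\cdots$ becomes, after writing each factor as an inverse Fourier transform and using $\mathcal F(f\ast g)=\mathcal F f\cdot\mathcal F g$, the Fourier transform of an iterated convolution of the individual kernels (with a shift by $x_{j_1}$ coming from the $e^{i\rho x_{j_1}}$ factor). Thus for each $J\in\mathcal J_N$ the corresponding summand equals $H(x-x_{j_{|J|}})\alpha_J\int_{\mathbb R}e^{i\rho t}\,\bigl[\text{shifted iterated convolution of }\widetilde K^h(x_{j_1},\cdot),\widetilde K_{j_1}(x_{j_1\to j_2}),\dots,\widetilde K_{j_{|J|}}(x,\cdot)\bigr](t)\,dt$, and likewise for the single-interaction terms $\widehat s_k(\rho,x-x_k)$ and for the leading term $\widetilde e_h(\rho,x)$ itself.

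Next I would define $K_{\mathfrak{I}_N}^h(x,t)$ to be the sum over all these contributions: $K_{\mathfrak{I}_N}^h(x,\cdot):=\widetilde K^h(x,\cdot)+\sum_{k=1}^N\alpha_k H(x-x_k)\bigl(\text{shift}_{x_k}\widetilde e_h\text{-kernel}\bigr)\ast\widetilde K_k(x,\cdot)+\sum_{J\in\mathcal J_N}\alpha_J H(x-x_{j_{|J|}})(\cdots)^\ast(\cdots)$, where the iterated convolutions are exactly the ones produced above. By Lemma \ref{lemaconv} and induction on the number of factors, each iterated convolution of characteristic-function-weighted continuous pieces is continuous with compact support, and the total support of the $J$-term lies in $[-(x_{j_1}+(x_{j_2}-x_{j_1})+\cdots+(x-x_{j_{|J|}})),\,\text{same}]=[-x,x]$ — a telescoping sum that is the crucial bookkeeping check. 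Hence $K_{\mathfrak{I}_N}^h(x,\cdot)$ is supported in $[-x,x]$, which is what makes \eqref{transmutationgeneral} a genuine Volterra-type representation with limits $\pm x$. Uniqueness of the kernel (so that \eqref{transmutationgeneral} determines $K_{\mathfrak{I}_N}^h$) follows from injectivity of the Fourier transform on $L_1$ together with the support condition, exactly as in the classical case.

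It remains to establish the regularity claims. The kernel $\widetilde K^h(x,\cdot)$ is piecewise (in fact globally on $(-x,x)$) absolutely continuous and $\widetilde K^h\in L_\infty(\Omega)$ is classical; the kernels $\widetilde K_k(x,t)$ from \eqref{kernelauxiliarsine} are explicitly $\tfrac12\chi_{x-x_k}(t)+\tfrac12\int_{|t|}^{x-x_k}\widehat H_k(x-x_k,s)\,ds$, hence bounded and piecewise absolutely continuous in $t$ with a single jump at $t=\pm(x-x_k)$ from the characteristic function. A convolution of an $L_2$-compactly-supported function with a piecewise-$AC$ compactly-supported one is piecewise $AC$ (the jumps of the result occur only at sums of endpoints of the factors, i.e. at finitely many points among $\pm x,\pm(x-2x_k),\dots$), and a convolution of an $L_1$ function with an $L_\infty$ function is bounded; chaining these through the finitely many convolutions in each term gives $K_{\mathfrak{I}_N}^h(x,\cdot)$ piecewise $AC$ on $[-x,x]$, in $L_2(-x,x)$, and $K_{\mathfrak{I}_N}^h\in L_\infty(\Omega)$ after noting the bounds are uniform in $x\in(0,b]$ because all the ingredient kernels are. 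The main obstacle I anticipate is purely combinatorial-notational: correctly tracking the shifts and the set identity $\{J\in\mathcal J_{N}\}\leftrightarrow$ iterated-convolution structure so that the support intervals telescope to exactly $[-x,x]$ and no term overshoots; the analytic content is light once that bookkeeping is set up. I would handle it by organizing the computation as an induction on $N$ paralleling the proof of Theorem \ref{TheoremSolCauchy}, peeling off the $H(x-x_{N+1})f_N(\rho,x)$ term and checking that its kernel, a convolution of the $\mathfrak I_N$-kernel evaluated at $x_{N+1}$ (shifted) with $\widetilde K_{N+1}(x,\cdot)$, has support in $[-x,x]$.
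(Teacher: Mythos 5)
Your proposal follows essentially the same route as the paper's proof: substitute the kernel representations \eqref{transm1} and \eqref{representationsinegeneral2} into \eqref{SoleGral}, convert products of Fourier transforms into Fourier transforms of (shifted) iterated convolutions, verify via Lemma \ref{lemaconv} that the telescoping supports land inside $[-x,x]$, and read off piecewise absolute continuity, $L_{2}$ and $L_{\infty}$ bounds from the resulting explicit formula for $K_{\mathfrak{I}_{N}}^{h}$. The only differences are cosmetic (your optional induction on $N$ versus the paper's direct computation, and your added uniqueness remark), so the argument is correct and matches the paper.
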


\begin{proof}
Susbtitution of formulas (\ref{transm1}) and (\ref{representationsinegeneral2}%
) in (\ref{SoleGral}) leads to the equality
\begin{align*}
& e_{\mathfrak{I}_{N}}^{h}(\rho,x)= e^{i\rho x}+\int_{-x}^{x}\widetilde{K}%
^{h}(x,t)e^{i\rho t}dt+\\
& +\sum_{k=1}^{N}\alpha_{k}H(x-x_{k})\left( e^{i\rho x_{k}}+\int
\limits_{-x_{k}}^{x_{k}}\widetilde{K}^{h}(x_{k},t)e^{i\rho t}dt\right) \left(
\int\limits_{-(x-x_{k})}^{x-x_{k}}\widetilde{K}_{k}(x,t)e^{i\rho t}dt\right)
\\
& +\sum_{J\in\mathcal{J}_{N}}\alpha_{J}H(x-x_{j_{|J|}})\Bigg[\left( e^{i\rho
x_{j_{1}}}+\int\limits_{-x_{j_{1}}}^{x_{j_{1}}}\widetilde{K}^{h}(x_{j_{1}%
},t)e^{i\rho t}dt\right) \left( \prod_{l=1}^{|J|-1}\int\limits_{-(x_{j_{l+1}%
}-x_{j_{l}})}^{x_{j_{l+1}}-x_{j_{l}}}\widetilde{K}_{k}(x_{j_{l+1}},t)e^{i\rho
t}dt\right) \\
&  \qquad\qquad\qquad\qquad\cdot\int\limits_{-(x-x_{j_{|J|}})}^{x-x_{j_{|J|}}%
}\widetilde{K}_{k}(x,t)e^{i\rho t}dt\Bigg]
\end{align*}
Note that
\begin{align*}
\prod_{l=1}^{|J|-1}\int\limits_{-(x_{j_{l+1}}-x_{j_{l}})}^{x_{j_{l+1}%
}-x_{j_{l}}}\widetilde{K}_{k}(x_{j_{l+1}},t)e^{i\rho t}dt  &  = \mathcal{F}%
\left\{ \left( \prod_{l=1}^{|J|-1}\right) ^{\ast}\left( \chi_{x_{j_{l+1}%
}-x_{j_{l}}}(t)\widetilde{K}_{k}(x_{j_{l+1}},t)\right) \right\} .
\end{align*}
In a similar way, if we denote $I_{A,B}=\left( e^{i\rho A}+\int\limits_{-A}%
^{A}\widetilde{K}^{h}(A,t)e^{i\rho t}dt\right) \left( \int\limits_{-B}%
^{B}\widetilde{K}_{k}(B,t)e^{i\rho t}dt\right) $ with $A,B\in(0,b)$, then
\begin{align*}
I_{A,B} =  &  e^{i\rho A}\int\limits_{-B}^{B}\widetilde{K}_{k}(B,t)e^{i\rho
t}dt+ \mathcal{F}\left( \chi_{A}(t)\widetilde{K}^{h}(A,t)\ast\chi
_{B}(t)\widetilde{K}_{k}(B,t)\right) \\
= &  \mathcal{F}\left(  \chi_{[A-B,B+A]}(t)\widetilde{K}_{k}(B,t-A)+ \chi
_{A}(t)\widetilde{K}^{h}(A,t)\ast\chi_{B}(t)\widetilde{K}_{k}(B,t)\right) .
\end{align*}
Set $R_{N}(\rho,x)=e_{N}(\rho,x)-e^{i\rho x}$. Thus,
\begin{align*}
R_{N}(\rho,x) = &  \mathcal{F}\Bigg[ \chi_{x}(t)\widetilde{K}^{h}(x,t)\\
& +\sum_{k=1}^{N}\alpha_{k}H(x-x_{k}) \left( \chi_{[2x_{k}-x,x]}%
(t)\widetilde{K}_{k}(x,t-x_{k})+ \chi_{x_{k}}(t)\widetilde{K}^{h}(x_{k}%
,t)\ast\chi_{x-x_{k}}(t)\widetilde{K}_{k}(x,t)\right) \\
& +\sum_{J\in\mathcal{J}_{N}}\alpha_{J}H(x-x_{j_{|J|}})\left( \prod
_{l=1}^{|J|-1}\right) ^{\ast}\left( \chi_{x_{j_{l+1}}-x_{j_{l}}}%
(t)\widetilde{K}_{k}(x_{j_{l+1}},t)\right) \\
& \ast\Big(\chi_{[x_{j_{|J|}}+x_{j_{1}}-x, x-(x_{j_{|J|}}-x_{j_{1}}%
)]}(t)\widetilde{K}_{j_{|J|}}(x,t-x_{j_{1}})\\
&  \qquad\; + \chi_{x_{j_{1}}}(t)\widetilde{K}^{h}(x_{j_{1}},t)\ast
\chi_{x-x_{j_{|J|}}}(t)\widetilde{K}_{j_{|J|}}(x,t)\Big)\Bigg]
\end{align*}
According to Lemma \ref{lemaconv}, the support of $\left( \prod_{l=1}%
^{|J|-1}\right) ^{\ast}\left( \chi_{x_{j_{l+1}}-x_{j_{l}}}(t)\widetilde{K}%
_{k}(x_{j_{l+1}},t)\right) $ lies in \newline$[x_{j_{1}}-x_{j_{|J|}},
x_{j_{|J|}}-x_{j_{1}}]$ and $\chi_{ x-(x_{j_{|J|}}-x_{j_{1}})}(t)\widetilde
{K}_{j_{|J|}}(x,t-x_{j_{1}})+ \chi_{x_{j_{1}}}(t)\widetilde{K}^{h}(x_{j_{1}%
},t)\ast\chi_{x-x_{j_{|J|}}}(t)\widetilde{K}_{j_{|J|}}(x,t)$ has its support
in $[x_{j_{|J|}}+x_{j_{1}}-x, x-(x_{j_{|J|}}-x_{j_{1}})]$. Hence the
convolution in the second sum of $R_{N}(\rho,x)$ has its support in $[-x,x]$.
On the other hand, $\chi_{x_{k}}(t)\widetilde{K}^{h}(x_{k},t)\ast\chi
_{x-x_{k}}(t)\widetilde{K}_{k}(x,t)$ has its support in $[-x,x]$, and since
$[2x_{k}-x,x]\subset[-x,x]$, we conclude that $\operatorname{Supp}\left(
\mathcal{F}^{-1}R_{N}(\rho,x)\right) \subset[-x,x]$.

Thus, we obtain (\ref{transmutationgeneral}) with
\begin{align}
K_{\mathfrak{I}_{N}}^{h}(x,t) =  &  \chi_{x}(t)\widetilde{K}^{h}%
(x,t)\nonumber\\
& +\sum_{k=1}^{n}\alpha_{k}H(x-x_{k}) \left( \chi_{[2x_{k}-x,x]}%
(t)\widetilde{K}_{k}(x,t-x_{k})+ \chi_{x_{k}}(t)\widetilde{K}^{h}(x_{k}%
,t)\ast\chi_{x-x_{k}}(t)\widetilde{K}_{k}(x,t)\right) \nonumber\\
& +\sum_{J\in\mathcal{J}_{N}}\alpha_{J}H(x-x_{j_{|J|}})\left( \prod
_{l=1}^{|J|-1}\right) ^{\ast}\left( \chi_{x_{j_{l+1}}-x_{j_{l}}}%
(t)\widetilde{K}_{j_{l}}(x_{j_{l+1}},t)\right) \label{transmkernelgeneral}\\
& \qquad\ast\Big(\chi_{ x-(x_{j_{|J|}}-x_{j_{1}})}(t)\widetilde{K}_{j_{|J|}%
}(x,t-x_{j_{1}}) + \chi_{x_{j_{1}}}(t)\widetilde{K}^{h}(x_{j_{1}},t)\ast
\chi_{x-x_{j_{|J|}}}(t)\widetilde{K}_{j_{|J|}}(x,t)\Big),\nonumber
\end{align}
and $K_{\mathfrak{I}_{N}}(x,\cdot)\in L_{2}(x,-x)$. By formula
(\ref{transmkernelgeneral}) and the definitions of $\widehat{K}^{h}(x,t)$ and
$\widetilde{K}_{k}(x,t)$, $K_{\mathfrak{I}_{N}}(x,t)$ is piecewise absolutely
continuous for $t\in[-x,x]$. Since $\widehat{K}^{h},\widetilde{K}_{k}\in
L_{\infty}(\Omega)$, is clear that $K_{\mathfrak{I}_{N}}^{f}\in L_{\infty
}(\Omega)$.
\end{proof}
\newline

As a consequence of (\ref{transmutationgeneral}), $e_{\mathfrak{I}_{N}}%
^{h}(\rho,x)$ is an entire function of exponential type $x$ on the spectral
parameter $\rho$.

\begin{example}
\label{beginexample1}  Consider (\ref{SolDiracDeltae}) with $N=1$. In this
case the solution $e_{\mathfrak{I}_{1}}^{0}(\rho,x)$ is given by
\[
e_{\mathfrak{I}_{1}}^{0}(\rho,x)= e^{i\rho x}+\alpha_{1}e^{i\rho x_{1}%
}H(x-x_{1})\frac{\sin(\rho(x-x_{1}))}{\rho}.
\]
We have
\[
e^{i\rho x_{1}}\frac{\sin(\rho(x-x_{1}))}{\rho}=\frac{1}{2}\int_{x_{1}%
-x}^{x-x_{1}}e^{i\rho(t+x_{1})}dt= \frac{1}{2}\int_{2x_{1}-x}^{x}e^{i\rho
t}dt.
\]
Hence
\[
e_{\mathfrak{I}_{1}}^{0}(\rho,x)= e^{i\rho x}+\int_{-x}^{x}K_{\mathfrak{I}%
_{1}}^{0}(x,t)e^{i\rho t}dt \quad\mbox{with  }\, K_{\mathfrak{I}_{1}}%
^{0}(x,t)=\frac{\alpha_{1}}{2}H(x-x_{1})\chi_{[2x_{1}-x,x]}(t).
\]

\end{example}

\begin{example}
\label{beginexample2}  Consider again Eq. (\ref{SolDiracDeltae}) but now with
$N=2$. In this case the solution $e_{\mathfrak{I}_{2}}^{0}(\rho,x)$ is given
by
\begin{align*}
e_{\mathfrak{I}_{2}}^{0}(\rho,x) =  &  e^{i\rho x}+\alpha_{1}e^{i\rho x_{1}%
}H(x-x_{1})\frac{\sin(\rho(x-x_{1}))}{\rho}+\alpha_{2}e^{i\rho x_{2}}%
H(x-x_{2})\frac{\sin(\rho(x-x_{2}))}{\rho}\\
&  +\alpha_{1}\alpha_{2}e^{i\rho x_{1}}H(x-x_{2})\frac{\sin(\rho(x_{2}%
-x_{1}))}{\rho}\frac{\sin(\rho(x-x_{2}))}{\rho},
\end{align*}
and the transmutation kernel $K_{\mathfrak{I}_{2}}^{0}(x,t)$ has the form
\begin{align*}
K_{\mathfrak{I}_{2}}^{0}(x,t) &  = \frac{\alpha_{1}H(x-x_{1})}{2}\chi
_{[2x_{1}-x,x]}(t)+\frac{\alpha_{2}H(x-x_{2})}{2}\chi_{[2x_{1}-x,x]}(t)\\
& \qquad+\frac{\alpha_{1}\alpha_{2}H(x-x_{2})}{4}\left( \chi_{x_{2}-x_{1}}%
\ast\chi_{x-x_{2}}\right) (t-x_{1}).
\end{align*}
Direct computation shows that
\begin{multline*}
\chi_{x_{2}-x_{1}}\ast\chi_{x-x_{2}}(t-x_{1})=\\%
\begin{cases}
0, & t\not \in [2x_{1}-x,x],\\
t+x-2x_{1}, & 2x_{1}-x< t< -|2x_{2}-x-x_{1}|+x_{1},\\
x-x_{1}-|2x_{2}-x-x_{1}|, & -|2x_{2}-x-x_{1}|+x_{1}< t<|2x_{2}-x-x_{1}%
|+x_{1}\\
x-t, & |2x_{2}-x-x_{1}|+x_{1}<t<x.
\end{cases}
\end{multline*}
In Figure \ref{levelcurves}, we can see some level curves of the kernel
$K_{\mathfrak{I}_{2}}^{0}(x,t)$ (as a function of $t$), $\mathfrak{I_{2}%
}=\{(0.25,1), (0.75,2)\}$, for some values of $x$. \begin{figure}[h]
\centering
\includegraphics[width=11.cm]{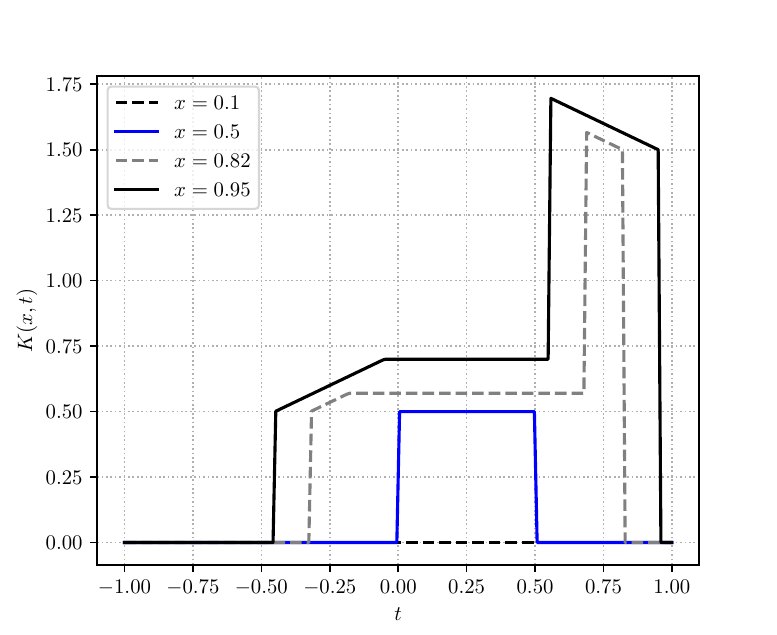} \caption{The graphs of
$K_{\mathfrak{I}_{2}}^{0}(x,t)$, as a function of $t\in[-1,1]$, for some
points $x\in(0,1)$ and $\mathfrak{I_{2}}=\{(0.25,1), (0.75,2)\}$.}%
\label{levelcurves}%
\end{figure}
\end{example}

For the general case we have the following representation for the kernel.

\begin{proposition}
The transmutation kernel $K_{\mathfrak{I}_{N}}^{0}(\rho,x)$ for the solution
$e_{\mathfrak{I}_{N}}^{0}(\rho,x)$ of (\ref{SolDiracDeltae}) is given by
\begin{align}
K_{\mathfrak{I}_{N}}^{0}(x,t)  &  =  \sum_{k=0}^{N}\frac{\alpha_{k}H(x-x_{k}%
)}{2}\chi_{[2x_{k}-x,x]}(t)\nonumber\\
&   +\sum_{J\in\mathcal{J}_{N}}\frac{\alpha_{J}H(x-x_{j_{|J|}})}{2^{|J|}%
}\left( \left( \prod_{l=1}^{|J|-1}\right) ^{\ast}\chi_{x_{j_{l+1}}-x_{j_{l}}%
}(t) \right) \ast\chi_{x-x_{j_{|J|}}}(t-x_{j_{1}})\label{kerneldeltaN}%
\end{align}

\end{proposition}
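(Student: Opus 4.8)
The plan is to read off \eqref{kerneldeltaN} directly from the closed form \eqref{SolDiracDeltae}, treating a general summand exactly as in Examples \ref{beginexample1} and \ref{beginexample2}, and using only the elementary Fourier identities $\dfrac{\sin(\rho a)}{\rho}=\dfrac12\mathcal{F}(\chi_{a})(\rho)$ (for $a>0$), $e^{i\rho c}\mathcal{F}(g)(\rho)=\mathcal{F}\bigl(g(\cdot-c)\bigr)(\rho)$, and $\mathcal{F}(f\ast g)=\mathcal{F}f\cdot\mathcal{F}g$, together with Lemma \ref{lemaconv} to handle supports and continuity.

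Concretely, I would split \eqref{SolDiracDeltae} into its three groups of terms. The free term $e^{i\rho x}$ is exactly the ``$e^{i\rho x}$'' of \eqref{transmutationgeneral} and contributes nothing to the kernel, so the $k$-sum in \eqref{kerneldeltaN} runs effectively over $k=1,\dots,N$ (the $k=0$ summand being vacuous, with $\alpha_{0}:=0$). Each single-index term satisfies
\[
\alpha_{k}e^{i\rho x_{k}}H(x-x_{k})\frac{\sin(\rho(x-x_{k}))}{\rho}=\frac{\alpha_{k}H(x-x_{k})}{2}\,\mathcal{F}\bigl(\chi_{x-x_{k}}(\cdot-x_{k})\bigr)(\rho),
\]
and $\chi_{x-x_{k}}(t-x_{k})=\chi_{[2x_{k}-x,x]}(t)$. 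For $J=(j_{1},\dots,j_{|J|})\in\mathcal{J}_{N}$, the corresponding summand is the product of $e^{i\rho x_{j_{1}}}$ with the $|J|-1$ quotients $\dfrac{\sin(\rho(x_{j_{l+1}}-x_{j_{l}}))}{\rho}$ and with $\dfrac{\sin(\rho(x-x_{j_{|J|}}))}{\rho}$; replacing each of these $|J|$ quotients by $\tfrac12\mathcal{F}(\chi_{\cdot})(\rho)$, turning the product of transforms into a convolution and the factor $e^{i\rho x_{j_{1}}}$ into a translation by $x_{j_{1}}$, this summand equals
\[
\frac{\alpha_{J}H(x-x_{j_{|J|}})}{2^{|J|}}\,\mathcal{F}\!\left(\Bigl[\Bigl(\prod_{l=1}^{|J|-1}\Bigr)^{\ast}\chi_{x_{j_{l+1}}-x_{j_{l}}}\ast\chi_{x-x_{j_{|J|}}}\Bigr](\cdot-x_{j_{1}})\right)(\rho),
\]
and since $(g\ast h)(\cdot-x_{j_{1}})=g\ast\bigl(h(\cdot-x_{j_{1}})\bigr)$, the argument of $\mathcal{F}$ is precisely the function on the second line of \eqref{kerneldeltaN}.

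To conclude, I would apply Lemma \ref{lemaconv} iteratively to check that each of these functions of $t$ is continuous with support contained in $[-x,x]$ --- the required support inclusions being the same elementary inequalities used in the proof of Theorem \ref{thoremtransmoperator}, the decisive one being $x_{j_{1}}\geqslant 0$ --- so that summing over $k$ and over $J$ exhibits $e_{\mathfrak{I}_{N}}^{0}(\rho,x)$ in the form $e^{i\rho x}+\int_{-x}^{x}K(x,t)e^{i\rho t}\,dt$ with $K(x,t)$ equal to the right-hand side of \eqref{kerneldeltaN}; since by injectivity of the Fourier transform this $K$ agrees a.e.\ with the kernel $K_{\mathfrak{I}_{N}}^{0}$ of Theorem \ref{thoremtransmoperator}, the formula follows. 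As a cross-check, the same expression drops out of specializing \eqref{transmkernelgeneral}: for $q\equiv0$ and $h=0$ one has $\widetilde{e}_{0}(\rho,x)=e^{i\rho x}$, hence $\widetilde{K}^{0}\equiv0$, while $\widehat{s}_{k}(\rho,x)=\sin(\rho x)/\rho$ forces $\widehat{H}_{k}\equiv0$ and thus $\widetilde{K}_{k}(x,t)=\tfrac12\chi_{x-x_{k}}(t)$ by \eqref{kernelauxiliarsine}; after substitution every convolution containing a factor $\widetilde{K}^{h}$ disappears, each of the $|J|$ kernels contributes one factor $\tfrac12$, and the nested characteristic functions collapse as above.

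The only delicate point, in either route, is bookkeeping: tracking the supports of the iterated convolutions (to confirm they lie in $[-x,x]$ and that the extra characteristic-function factors appearing in \eqref{transmkernelgeneral} and \eqref{kernelauxiliarsine} are redundant) and collecting the $|J|$ factors $\tfrac12$ into the single normalization $2^{-|J|}$. I do not anticipate any substantive obstacle beyond this careful accounting.
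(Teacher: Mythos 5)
Your proposal is correct, and in fact it contains the paper's own proof as your ``cross-check'': the paper proves the proposition in two lines by setting $q\equiv 0$, $h=0$ in the general kernel formula (\ref{transmkernelgeneral}), observing that $\widetilde{K}^{0}\equiv 0$ and $\widetilde{K}_{k}(x,t)=\tfrac{1}{2}\chi_{x-x_{k}}(t)$ (so every convolution containing a factor $\widetilde{K}^{h}$ drops out), and using the identity $\chi_{[x_{j_{|J|}}+x_{j_{1}}-x,\,x-(x_{j_{|J|}}-x_{j_{1}})]}(t)=\chi_{x-x_{j_{|J|}}}(t-x_{j_{1}})$. Your primary route instead re-derives the kernel from scratch out of the closed form (\ref{SolDiracDeltae}) via the elementary Fourier identities and Lemma \ref{lemaconv}; this is essentially a self-contained rerun of the proof of Theorem \ref{thoremtransmoperator} in the special case where all the regular kernels vanish, and your support bookkeeping (the telescoping half-widths and the inequality $2x_{j_{1}}-x\geqslant -x$) is sound. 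What the paper's route buys is brevity, since all the analytic work is already encapsulated in (\ref{transmkernelgeneral}); what your direct route buys is independence from that theorem and a transparent explanation of where the factor $2^{-|J|}$ and the shift by $x_{j_{1}}$ come from. Your reading of the $k=0$ summand in (\ref{kerneldeltaN}) as vacuous is the right way to handle what is evidently a typo (the sum should start at $k=1$, as in (\ref{SolDiracDeltae})). Either route is acceptable; no gap.
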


\begin{proof}
In this case $\widetilde{e}_{0}(\rho,x)=e^{i\rho x}$, $\widehat{s}_{k}%
(\rho,x-x_{k})=\frac{\sin(\rho(x-x_{k}))}{\rho}$, hence $\widetilde{K}%
^{0}(x,t)\equiv0$, $\widetilde{K}_{k}(x,t)=\frac{1}{2}\chi_{x-x_{k}}(t)$.
Substituting these expressions into (\ref{transmkernelgeneral}) and taking
into account that $\chi_{x_{j_{|J|}}+x_{j_{1}}-x, x-(x_{j_{|J|}}-x_{j_{1}}%
)}(t)=\chi_{x-x_{j_{|J|}}}(t-x_{j_{1}})$ we obtain (\ref{kerneldeltaN})
\end{proof}
\newline

Let
\begin{equation}
\label{transmutationoperator1}\mathbf{T}_{\mathfrak{I}_{N}}^{h} u(x):= u(x)+
\int_{-x}^{x} K_{\mathfrak{I}_{N}}^{h}(x,t)u(t)dt.
\end{equation}
By Theorem \ref{thoremtransmoperator}, $\mathbf{T}_{\mathfrak{I}_{N}}^{f}%
\in\mathcal{B}\left( L_{2}(-b,b)\right) $ and
\begin{equation}
\label{transmutationrelation1}e_{\mathfrak{I}_{N}}^{h}(\rho,x)= \mathbf{T}_{
\mathfrak{I}_{N}}^{h} \left[ e^{i\rho x}\right] .
\end{equation}

\subsection{Goursat conditions}

Let us define the function
\begin{equation}
\label{Sigmafunction}\sigma_{\mathfrak{I}_{N}}(x):= \sum_{k=1}^{N}\alpha
_{k}H(x-x_{k}).
\end{equation}
Hence $\sigma_{\mathfrak{I}_{N}}^{\prime}(x)=q_{\delta,\mathfrak{I}_{n}}(x)$
in the distributional sense ( $(\sigma_{\mathfrak{I}_{N}},\phi)_{C_{0}%
^{\infty}(0,b)}=-(q_{\delta,\mathfrak{I}_{N}}, \phi^{\prime})_{C_{0}^{\infty
}(0,b)}$ for all $\phi\in C_{0}^{\infty}(0,b)$). Note that in Examples
\ref{beginexample1} and \ref{beginexample2} we have
\[
K_{\mathfrak{J}_{N}}^{0}(x,x)= \frac{1}{2}\left( \int_{0}^{x}q(s)ds+\sigma
_{\mathfrak{I}_{N}}(x)\right)  \;\,\mbox{   and    }\;\, K_{\mathfrak{I}_{N}%
}^{0}(x,-x)=0 \;\, \mbox{  for } N=1,2.
\]
More generally, the following statement is true.

\begin{proposition}
\label{propGoursat}  The integral transmutation kernel $K_{\mathfrak{I}_{N}%
}^{h}$ satisfies the following Goursat conditions for $x\in[0,b]$
\begin{equation}
K_{\mathfrak{J}_{N}}^{h}(x,x)= \frac{1}{2}\left( h+\int_{0}^{x}q(s)ds+\sigma
_{\mathfrak{I}_{N}}(x)\right) \qquad\mbox{   and    }\qquad K_{\mathfrak{I}%
_{N}}^{h}(x,-x)=\frac{h}{2}.
\end{equation}

\end{proposition}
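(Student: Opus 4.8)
The idea is to read off the diagonal values $t=x$ and $t=-x$ directly from the explicit formula \eqref{transmkernelgeneral} for $K_{\mathfrak{I}_{N}}^{h}(x,t)$. The kernel is a sum of three groups of terms: the ``regular'' kernel $\chi_{x}(t)\widetilde{K}^{h}(x,t)$, the single-interaction terms indexed by $k=1,\dots,N$, and the multi-interaction terms indexed by $J\in\mathcal{J}_{N}$. I would evaluate the contribution of each group at $t=\pm x$ separately, relying on the known boundary behaviour of $\widetilde{K}^{h}$ and $\widetilde{K}_{k}$, namely $\widetilde{K}^{h}(x,x)=\frac{h}{2}+\frac12\int_{0}^{x}q(s)ds$, $\widetilde{K}^{h}(x,-x)=\frac{h}{2}$, and, from \eqref{kernelauxiliarsine}, $\widetilde{K}_{k}(x,t)=\frac12\chi_{x-x_{k}}(t)+\frac12\int_{|t|}^{x-x_{k}}\widehat{H}_{k}(x-x_{k},s)\,ds$, so that $\widetilde{K}_{k}(x,\pm(x-x_{k}))=\frac12$ (the integral term vanishes at the endpoints and the characteristic function equals $1$ there).

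First, for the regular part, $\chi_{x}(t)\widetilde{K}^{h}(x,t)$ contributes exactly $\widetilde{K}^{h}(x,x)$ at $t=x$ and $\widetilde{K}^{h}(x,-x)$ at $t=-x$, giving the $h$ and $\int_{0}^{x}q$ pieces. Next, the key point for the single-interaction terms is the support analysis. The term $\chi_{[2x_{k}-x,x]}(t)\widetilde{K}_{k}(x,t-x_{k})$ has support ending exactly at $t=x$, where $\widetilde{K}_{k}(x,t-x_{k})=\widetilde{K}_{k}(x,x-x_{k})=\frac12$; this produces the $\frac12\alpha_{k}H(x-x_{k})$ contribution that assembles into $\frac12\sigma_{\mathfrak{I}_{N}}(x)$ after summing over $k$. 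The convolution term $\chi_{x_{k}}(t)\widetilde{K}^{h}(x_{k},t)\ast\chi_{x-x_{k}}(t)\widetilde{K}_{k}(x,t)$, by Lemma~\ref{lemaconv}, has support in $[-x,x]$ and is continuous; I need that it \emph{vanishes} at the two endpoints $t=\pm x$. This is because a convolution of two functions supported in $[-x_{k},x_{k}]$ and $[-(x-x_{k}),x-x_{k}]$ respectively, both bounded, is continuous and its value at the right endpoint $t=x=x_{k}+(x-x_{k})$ of the support sum is the integral of a bounded function over a single point, hence zero (similarly at $t=-x$). The same reasoning disposes of the entire multi-interaction sum: each term there is a convolution whose support is contained in $[-x,x]$ with the endpoints $\pm x$ attained only as the extreme sum of the supports of the convolved pieces, so continuity forces the value zero at $t=\pm x$.

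Concretely, for a $J$-term, the first factor $\left(\prod_{l=1}^{|J|-1}\right)^{\ast}\left(\chi_{x_{j_{l+1}}-x_{j_{l}}}(t)\widetilde{K}_{j_{l}}(x_{j_{l+1}},t)\right)$ is supported in $[x_{j_{1}}-x_{j_{|J|}},\,x_{j_{|J|}}-x_{j_{1}}]$ and is continuous (iterated application of Lemma~\ref{lemaconv}); the second factor, $\chi_{x-(x_{j_{|J|}}-x_{j_{1}})}(t)\widetilde{K}_{j_{|J|}}(x,t-x_{j_{1}})+\chi_{x_{j_{1}}}(t)\widetilde{K}^{h}(x_{j_{1}},t)\ast\chi_{x-x_{j_{|J|}}}(t)\widetilde{K}_{j_{|J|}}(x,t)$, is supported in $[x_{j_{|J|}}+x_{j_{1}}-x,\,x-(x_{j_{|J|}}-x_{j_{1}})]$ and bounded. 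Their convolution is therefore continuous with support in $[-x,x]$, and at $t=\pm x$ the convolution integral reduces to an integral over a null set, hence $0$. Assembling: at $t=x$ only $\widetilde{K}^{h}(x,x)$ and the $\frac12\alpha_{k}H(x-x_{k})$ terms survive, yielding $\frac12\bigl(h+\int_{0}^{x}q(s)\,ds+\sigma_{\mathfrak{I}_{N}}(x)\bigr)$; at $t=-x$ only $\widetilde{K}^{h}(x,-x)=\frac{h}{2}$ survives.

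\textbf{Main obstacle.} The routine parts (the regular kernel, the ``jump'' term $\chi_{[2x_{k}-x,x]}\widetilde{K}_{k}(x,t-x_{k})$) are immediate. The delicate point is justifying cleanly that \emph{every} convolution term vanishes at $t=\pm x$: one must argue that these convolutions are genuinely continuous up to the endpoints of their support (so that the ``boundary value'' is well-defined and equals the limit from inside) and that this limit is $0$. The cleanest way is to invoke Lemma~\ref{lemaconv} to get continuity on all of $\mathbb{R}$, note that the support is a \emph{proper} subinterval reaching $\pm x$ only at a single endpoint, and use that a continuous function supported in a closed interval must vanish at the endpoints of that interval. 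A minor bookkeeping care is needed to check that $2x_{k}-x\geqslant -x$ and the analogous support inclusions for the $J$-terms hold whenever the relevant Heaviside factor is nonzero, so that no term's support actually reaches $t=-x$ except possibly through these vanishing convolutions; this is exactly the support computation already carried out in the proof of Theorem~\ref{thoremtransmoperator}.
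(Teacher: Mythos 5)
Your proposal is correct and follows essentially the same route as the paper: evaluate the regular kernel and the jump terms $\chi_{[2x_{k}-x,x]}(t)\widetilde{K}_{k}(x,t-x_{k})$ directly at $t=\pm x$, and dispose of all convolution terms by combining the support computation from Theorem~\ref{thoremtransmoperator} with the continuity supplied by Lemma~\ref{lemaconv}, so that a continuous function supported in $[-x,x]$ must vanish at the endpoints. The paper simply packages all the convolution contributions into a single auxiliary function $F(x,t)$ before making this argument, but the substance is identical to yours.
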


\begin{proof}
Fix $x\in[0,b]$ and take $\xi\in\{-x,x\}$. By formula
(\ref{transmkernelgeneral}) we can write
\[
K_{\mathfrak{I}_{N}}^{h}(x,\xi)= \widetilde{K}^{h}(x,\xi)+\sum_{k=1}^{N}%
\alpha_{k}H(x-x_{k})\chi_{[2x_{k}-x,x]}(\xi)\widetilde{K}_{k}(x,\xi
-x_{k})+F(x,\xi),
\]
where
\begin{align*}
F(x,t) =  &  \sum_{k=1}^{n}\alpha_{k}H(x-x_{k}) \chi_{x_{k}}(t)\widetilde
{K}^{h}(x_{k},t)\ast\chi_{x-x_{k}}(t)\widetilde{K}_{k}(x,t)\\
& +\sum_{J\in\mathcal{J}_{N}}\alpha_{J}H(x-x_{j_{|J|}})\left( \prod
_{l=1}^{|J|-1}\right) ^{\ast}\left( \chi_{x_{j_{l+1}}-x_{j_{l}}}%
(t)\widetilde{K}_{j_{l}}(x_{j_{l+1}},t)\right) \\
& \qquad\ast\Big(\chi_{ x-(x_{j_{|J|}}-x_{j_{1}})}(t)\widetilde{K}_{j_{|J|}%
}(x,t-x_{j_{1}}) + \chi_{x_{j_{1}}}(t)\widetilde{K}^{h}(x_{j_{1}},t)\ast
\chi_{x-x_{j_{|J|}}}(t)\widetilde{K}_{j_{|J|}}(x,t)\Big).
\end{align*}

In the proof of Theorem \ref{thoremtransmoperator} we obtain that
$\operatorname{Supp}(F(x,t))\subset[-x,x]$. Since $\widetilde{K}^{h}(x_{j},t)$
and $\widetilde{K}_{k}(x_{j},t)$ are continuous with respect to $t$ in the
intervals $[-x_{j},x_{j}]$ and $[x_{k}-x_{j},x_{j}-x_{k}]$ respectively for
$j=1, \dots, N$, $k\leqslant j$, by Lemma \ref{lemaconv} the function $F(x,t)$
is continuous for all $t\in\mathbb{R}$. Thus $F(x,\xi)=0$. For the case
$\xi=x$, we have that $\widetilde{K}^{h}(x,x)=\frac{h}{2}+\frac{1}{2}\int
_{0}^{x}q(s)ds$, $\chi_{[2x_{k}-x,x]}(x)=1$ and
\[
\widetilde{K}_{k}(x,x-x_{k})=\frac{1}{2}\chi_{x-x_{k}}(x-x_{k}%
)+\displaystyle \frac{1}{2}\int_{|x-x_{k}|}^{x-x_{k}}\widehat{H}_{k}%
(x-x_{k},s)ds= \frac{1}{2}
\]
(we assume that $x\geqslant x_{k}$ in order to have $H(x-x_{k})=1$).
Thus\newline$K_{\mathfrak{I}_{N}}^{h}(x,x)=\frac{1}{2}\left( h+\int_{0}%
^{x}q(s)ds+\sigma_{\mathfrak{I}_{N}}(x)\right) $. For the case $\xi=-x$,
$\widetilde{K}^{h}(x,-x)=\frac{h}{2}$ and\newline$\chi_{[2x_{k}-x,x]}(-x)=0$.
Hence $K_{\mathfrak{I}_{N}}^{h}(x,x)=\frac{h}{2}$.
\end{proof}

\begin{remark}
\label{remarkantiderivativegousart} According to Proposition \ref{propGoursat}%
, $2K_{\mathfrak{I}_{N}}^{h}(x,x)$ is a (distributional) antiderivative of the
potential $q(x)+q_{\delta,\mathfrak{I}_{N}}(x)$.
\end{remark}

\subsection{The transmuted Cosine and Sine solutions}

Let $c_{\mathfrak{I}_{N}}^{h}(\rho,x)$ and $s_{\mathfrak{I}_{N}}(\rho,x)$ be
the solutions of Eq. (\ref{Schrwithdelta}) satisfying the initial conditions
\begin{align}
c_{\mathfrak{I}_{N}}^{h}(\rho, 0)= 1,  &\quad  (c_{\mathfrak{I}_{N}}^{h})^{\prime
}(\rho, 0)=h,\label{cosinegralinitialconds}\\
s_{\mathfrak{I}_{N}}(\rho, 0)=0,  & \quad s_{\mathfrak{I}_{N}}^{\prime}(\rho,0)=
1.\label{sinegralinitialconds}%
\end{align}
Note that $c_{\mathfrak{I}_{N}}^{h}(\rho,x)=\frac{e_{\mathfrak{I}_{N}}%
^{h}(\rho,x)+e_{\mathfrak{I}_{N}}^{h}(-\rho,x)}{2}$ and $s_{\mathfrak{I}_{N}%
}(\rho,x)= \frac{e_{\mathfrak{I}_{N}}^{h}(\rho,x)-e_{\mathfrak{I}_{N}}%
^{h}(-\rho,x)}{2i\rho}$.

\begin{remark}
\label{remarksinecosinefunda}  By Corollary \ref{proppropofsolutions},
$c_{\mathfrak{I}_{N}}^{h}(\rho,\cdot), s_{\mathfrak{I}_{N}}(\rho,\cdot)\in
AC[0,b]$ and both functions are solutions of Eq. (\ref{schrodingerregular}) on
$[0,x_{1}]$, hence their Wronskian is constant for $x\in[0,x_{1}]$ and
\begin{align*}
1 &  = W\left[ c_{\mathfrak{I}_{N}}^{h}(\rho,x), s_{\mathfrak{I}_{N}}%
(\rho,x)\right] (0)=W\left[ c_{\mathfrak{I}_{N}}^{h}(\rho,x), s_{\mathfrak{I}%
_{N}}(\rho,x)\right] (x_{1}-) =
\begin{vmatrix}
c_{\mathfrak{I}_{N}}^{h}(\rho,x_{1}) & s_{\mathfrak{I}_{N}}(\rho,x_{1})\\
(c_{\mathfrak{I}_{N}}^{h})^{\prime}(\rho,x_{1}-) & s^{\prime}_{\mathfrak{I}%
_{N}}(\rho,x_{1}-)
\end{vmatrix}
\\
& =
\begin{vmatrix}
c_{\mathfrak{I}_{N}}^{h}(\rho,x_{1}) & s_{\mathfrak{I}_{N}}(\rho,x_{1})\\
(c_{\mathfrak{I}_{N}}^{h})^{\prime}(\rho,x_{1}+)-\alpha_{1} c_{\mathfrak{I}%
_{N}}^{h}(\rho,x_{1}) & s^{\prime}_{\mathfrak{I}_{N}}(\rho,x_{1}+)-\alpha_{1}
s_{\mathfrak{I}_{N}}(\rho,x_{1})
\end{vmatrix}
\\
& =
\begin{vmatrix}
c_{\mathfrak{I}_{N}}^{h}(\rho,x_{1}) & s_{\mathfrak{I}_{N}}(\rho,x_{1})\\
(c_{\mathfrak{I}_{N}}^{h})^{\prime}(\rho,x_{1}+) & s^{\prime}_{\mathfrak{I}%
_{N}}(\rho,x_{1}+)
\end{vmatrix}
= W\left[ c_{\mathfrak{I}_{N}}^{h}(\rho,x), s_{\mathfrak{I}_{N}}%
(\rho,x)\right] (x_{1}+)
\end{align*}
(the equality in the second line is due to (\ref{jumpderivative})). Since
$c_{\mathfrak{I}_{N}}^{h}(\rho,x), s_{\mathfrak{I}_{N}}(\rho,x)$ are solutions
of (\ref{schrodingerregular}) on $[x_{1},x_{2}]$, then $W\left[
C_{\mathfrak{I}_{N}}^{h}(\rho,x), s_{\mathfrak{I}_{N}}(\rho,x)\right] $ is
constant for $x\in[x_{1},x_{2}]$. Thus, \newline$W\left[ c_{\mathfrak{I}_{N}%
}^{h}(\rho,x), s_{\mathfrak{I}_{N}}(\rho,x)\right] (x)=1$ for all
$x\in[0,x_{2}]$. Continuing the process we obtain that the Wronskian equals
one in the whole segment $[0,b]$. Thus, $c_{\mathfrak{I}_{N}}^{h}(\rho,x),
s_{\mathfrak{I}_{N}}(\rho,x)$ are linearly independent. Finally, if $u$ is a
solution of (\ref{Schrwithdelta}), by Remark \ref{remarkunicityofsol}, $u$ can
be written as $u(x)=u(0)c_{\mathfrak{I}_{N}}^{h}(\rho,x)+u^{\prime
}(0)s_{\mathfrak{I}_{N}}(\rho,x)$. In this way, $\left\{ c_{\mathfrak{I}_{N}%
}^{h}(\rho,x), s_{\mathfrak{I}_{N}}(\rho,x)\right\} $ is a fundamental set of
solutions for (\ref{Schrwithdelta}).
\end{remark}

Similarly to the case of the regular Eq. (\ref{regularSch}) (see \cite[Ch.
1]{marchenko}), from (\ref{transmutationgeneral}) we obtain the following representations.

\begin{proposition}
The solutions $c_{\mathfrak{I}_{N}}^{h}(\rho,x)$ and $s_{\mathfrak{I}_{N}%
}(\rho,x)$ admit the following integral representations
\begin{align}
c_{\mathfrak{I}_{N}}^{h}(\rho, x) &  =  \cos(\rho x)+\int_{0}^{x}
G_{\mathfrak{I}_{N}}^{h}(x,t)\cos(\rho t)dt,\label{transmcosinegral}\\
s_{\mathfrak{I}_{N}}(\rho,x)  &  =  \frac{\sin(\rho x)}{\rho}+\int_{0}%
^{x}S_{\mathfrak{I}_{N}}(x,t)\frac{\sin(\rho t)}{\rho}%
dt,\label{transmsinegral}%
\end{align}
where
\begin{align}
G_{\mathfrak{I}_{N}}^{h}(x,t)  &  =  K_{\mathfrak{I}_{N}}^{h}%
(x,t)+K_{\mathfrak{I}_{N}}^{h}(x,-t),\label{cosineintegralkernelgral}\\
S_{\mathfrak{I}_{N}}(x,t)  &  =  K_{\mathfrak{I}_{N}}^{h}(x,t)-K_{\mathfrak{I}%
_{N}}^{h}(x,-t).\label{sineintegralkernelgral}%
\end{align}

\end{proposition}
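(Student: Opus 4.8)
The plan is to derive the integral representations for $c_{\mathfrak{I}_{N}}^{h}$ and $s_{\mathfrak{I}_{N}}$ directly from the already-established transmutation formula \eqref{transmutationgeneral} for $e_{\mathfrak{I}_{N}}^{h}$, together with the parity-decomposition identities $c_{\mathfrak{I}_{N}}^{h}(\rho,x)=\tfrac{1}{2}\bigl(e_{\mathfrak{I}_{N}}^{h}(\rho,x)+e_{\mathfrak{I}_{N}}^{h}(-\rho,x)\bigr)$ and $s_{\mathfrak{I}_{N}}(\rho,x)=\tfrac{1}{2i\rho}\bigl(e_{\mathfrak{I}_{N}}^{h}(\rho,x)-e_{\mathfrak{I}_{N}}^{h}(-\rho,x)\bigr)$ recorded just above. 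First I would write, using \eqref{transmutationgeneral},
\[
e_{\mathfrak{I}_{N}}^{h}(\rho,x)=e^{i\rho x}+\int_{-x}^{x}K_{\mathfrak{I}_{N}}^{h}(x,t)e^{i\rho t}\,dt,\qquad e_{\mathfrak{I}_{N}}^{h}(-\rho,x)=e^{-i\rho x}+\int_{-x}^{x}K_{\mathfrak{I}_{N}}^{h}(x,t)e^{-i\rho t}\,dt,
\]
and in the second integral substitute $t\mapsto -t$ to get $\int_{-x}^{x}K_{\mathfrak{I}_{N}}^{h}(x,-t)e^{i\rho t}\,dt$. Averaging the two expressions and using $e^{i\rho x}+e^{-i\rho x}=2\cos(\rho x)$ yields
\[
c_{\mathfrak{I}_{N}}^{h}(\rho,x)=\cos(\rho x)+\frac{1}{2}\int_{-x}^{x}\bigl(K_{\mathfrak{I}_{N}}^{h}(x,t)+K_{\mathfrak{I}_{N}}^{h}(x,-t)\bigr)e^{i\rho t}\,dt.
\]

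Next I would observe that the integrand $\bigl(K_{\mathfrak{I}_{N}}^{h}(x,t)+K_{\mathfrak{I}_{N}}^{h}(x,-t)\bigr)e^{i\rho t}$, while not itself even in $t$, has the property that its even part in $t$ is $\bigl(K_{\mathfrak{I}_{N}}^{h}(x,t)+K_{\mathfrak{I}_{N}}^{h}(x,-t)\bigr)\cos(\rho t)$ and its odd part integrates to zero over the symmetric interval $[-x,x]$; hence the integral equals $\int_{-x}^{x}\bigl(K_{\mathfrak{I}_{N}}^{h}(x,t)+K_{\mathfrak{I}_{N}}^{h}(x,-t)\bigr)\cos(\rho t)\,dt$. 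Since $G_{\mathfrak{I}_{N}}^{h}(x,t):=K_{\mathfrak{I}_{N}}^{h}(x,t)+K_{\mathfrak{I}_{N}}^{h}(x,-t)$ is even in $t$, this last integral is $2\int_{0}^{x}G_{\mathfrak{I}_{N}}^{h}(x,t)\cos(\rho t)\,dt$, and the factor $\tfrac12$ cancels, giving \eqref{transmcosinegral} with kernel \eqref{cosineintegralkernelgral}. The argument for $s_{\mathfrak{I}_{N}}$ is entirely parallel: subtracting the two integral expressions for $e_{\mathfrak{I}_{N}}^{h}(\pm\rho,x)$ and dividing by $2i\rho$ gives $s_{\mathfrak{I}_{N}}(\rho,x)=\tfrac{\sin(\rho x)}{\rho}+\tfrac{1}{2i\rho}\int_{-x}^{x}\bigl(K_{\mathfrak{I}_{N}}^{h}(x,t)-K_{\mathfrak{I}_{N}}^{h}(x,-t)\bigr)e^{i\rho t}\,dt$; the kernel $S_{\mathfrak{I}_{N}}(x,t):=K_{\mathfrak{I}_{N}}^{h}(x,t)-K_{\mathfrak{I}_{N}}^{h}(x,-t)$ is odd in $t$, so only the $\tfrac{1}{i}\sin(\rho t)$ part of $e^{i\rho t}$ survives the symmetric integration, producing $\tfrac{2}{\rho}\int_{0}^{x}S_{\mathfrak{I}_{N}}(x,t)\sin(\rho t)\,dt$ up to the constant, which yields \eqref{transmsinegral} after the $\tfrac12$ cancels.

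The only mild subtlety — not really an obstacle — is the interchange of summation (parity decomposition) and integration and the justification that the decomposition of $e^{i\rho t}$ into $\cos(\rho t)+i\sin(\rho t)$ may be used inside the integral against an $L_{2}(-x,x)$ kernel; this is immediate since for fixed $x$ the kernel $K_{\mathfrak{I}_{N}}^{h}(x,\cdot)\in L_{2}(-x,x)\subset L_{1}(-x,x)$ by Theorem \ref{thoremtransmoperator}, so all the integrals converge absolutely and splitting the integrand is legitimate. One should also note that $s_{\mathfrak{I}_{N}}(\rho,x)=\tfrac{\sin(\rho x)}{\rho}+\tfrac1\rho\int_0^x S_{\mathfrak{I}_N}(x,t)\sin(\rho t)\,dt$ extends by continuity to $\rho=0$, since both sides are entire in $\rho$ (this follows from the exponential-type entirety of $e_{\mathfrak{I}_{N}}^{h}(\rho,x)$ noted after Theorem \ref{thoremtransmoperator}), so the division by $\rho$ causes no genuine difficulty. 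In short, the proposition is a routine but instructive consequence of \eqref{transmutationgeneral} obtained by symmetrizing and antisymmetrizing in $t$; no new analytic input beyond Theorem \ref{thoremtransmoperator} is needed.
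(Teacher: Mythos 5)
Your proof is correct and is exactly the standard symmetrization argument the paper invokes (it gives no explicit proof, citing only the analogy with the regular case in Marchenko, Ch.~1): average and difference of $e_{\mathfrak{I}_{N}}^{h}(\pm\rho,x)$, the change of variables $t\mapsto -t$, and the parity of $G_{\mathfrak{I}_{N}}^{h}$ and $S_{\mathfrak{I}_{N}}$ in $t$ to fold the integral onto $[0,x]$. All interchanges are justified as you note since $K_{\mathfrak{I}_{N}}^{h}(x,\cdot)\in L_{2}(-x,x)$, so nothing is missing.
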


\begin{remark}
\label{remarkGoursatcosinesine}  By Proposition \ref{propGoursat}, the cosine
and sine integral transmutation kernels satisfy the conditions
\begin{equation}
G_{\mathfrak{I}_{N}}^{h}(x,x)= h+\frac{1}{2}\left( \int_{0}^{x}q(s)ds+\sigma
_{\mathfrak{I}_{N}}(x)\right) ,
\end{equation}

\begin{equation}
S_{\mathfrak{I}_{N}}(x,x)= \frac{1}{2}\left( \int_{0}^{x}q(s)ds+\sigma
_{\mathfrak{I}_{N}}(x)\right)  \quad\mbox{and }\;\; S_{\mathfrak{I}_{N}%
}(x,0)=0.
\end{equation}
Introducing the cosine and sine transmutation operators
\begin{equation}
\label{cosineandsinetransop}\mathbf{T}_{\mathfrak{I}_{N},h}^{C}u(x)=
u(x)+\int_{0}^{x}G_{\mathfrak{I}_{N}}^{h}(x,t)u(t)dt, \quad\mathbf{T}%
_{\mathfrak{I}_{N}}^{S}u(x)= u(x)+\int_{0}^{x}S_{\mathfrak{I}_{N}}(x,t)u(t)dt
\end{equation}
we obtain
\begin{equation}
\label{transmutedcosineandsinesol}c_{\mathfrak{I}_{N}}^{h}(\rho,x)=\mathbf{T}%
_{\mathfrak{I}_{N},h}^{C}\left[ \cos(\rho x)\right] , \quad s_{\mathfrak{I}%
_{N}}(\rho,x) = \mathbf{T}_{\mathfrak{I}_{N}}^{S}\left[ \frac{\sin(\rho
x)}{\rho}\right] .
\end{equation}

\end{remark}

\begin{remark}
\label{remarkwronskian}  According to Remark \ref{remarksinecosinefunda}, the
space of solutions of (\ref{Schrwithdelta}) has dimension 2, and given
$f,g\in\mathcal{D}_{2}\left( \mathbf{L}_{q,\mathfrak{I}_{N}}\right) $
solutions of (\ref{Schrwithdelta}), repeating the same procedure of Remark
\ref{remarksinecosinefunda}, $W[f,g]$ is constant in the whole segment
$[0,b]$. The solutions $f,g$ are a fundamental set of solutions iff
$W[f,g]\neq0$.
\end{remark}

\section{The SPPS method and the mapping property}

\subsection{Spectral parameter powers series}

As in the case of the regular Schr\"odinger equation
\cite{blancarte,sppsoriginal}, we obtain a representation for the solutions of
(\ref{Schrwithdelta}) as a power series in the spectral parameter (SPPS
series). Assume that there exists a solution $f\in\mathcal{D}_{2}\left(
\mathbf{L}_{q,\mathfrak{I}_{N}}\right) $ that does not vanish in the whole
segment $[0,b]$.

\begin{remark}
\label{remarknonhomeq}  Given $g\in L_{2}(0,b)$, a solution $u\in
\mathcal{D}_{2}\left( \mathbf{L}_{q,\mathfrak{I}_{N}}\right) $ of the
non-homogeneous Cauchy problem
\begin{equation}
\label{nonhomoCauchy}%
\begin{cases}
\mathbf{L}_{q,\mathfrak{I}_{N}}u(x)= g(x), \quad0<x<b\\
u(0)=u_{0}, \; u^{\prime}(0)=u_{1}%
\end{cases}
\end{equation}
can be obtained by solving the regular equation $\mathbf{L}_{q}u(x)=g(x)$ a.e.
$x\in(0,b)$ as follows. Consider the Polya factorization $\mathbf{L}%
_{q}u=-\frac{1}{f}Df^{2}D\frac{u}{f}$, where $D=\frac{d}{dx}$. A direct
computation shows that $u$ given by
\begin{equation}
\label{solutionnonhomogeneouseq}u(x)= -f(x)\int_{0}^{x}\frac{1}{f^{2}(t)}%
\int_{0}^{t} f(s)g(s)ds+\frac{u_{0}}{f(0)}f(x)+(f(0)u_{1}-f^{\prime}%
(0)u_{0})f(x)\int_{0}^{x}\frac{dt}{f^{2}(t)}%
\end{equation}
satisfies (\ref{nonhomoCauchy}) (actually, $f(x)\int_{0}^{x}\frac{1}{f^{2}%
(t)}dt$ is the second linearly independent solution of $\mathbf{L}_{q}u=0$
obtained from $f$ by Abel's formula). By Remark \ref{remarkidealdomain},
$u\in\mathcal{D}_{2}\left( \mathbf{L}_{q,\mathfrak{J}_{N}}\right) $ and by
Proposition \ref{propregular} and Remark \ref{remarkunicityofsol}, formula
(\ref{solutionnonhomogeneouseq}) provides the unique solution of
(\ref{nonhomoCauchy}). Actually, if we denote $\mathcal{I}u(x):= \int_{0}%
^{x}u(t)dt$ and define $\mathbf{R}_{\mathfrak{I}_{N}}^{f}:= -f\mathcal{I}%
f^{2}\mathcal{I}$, then $\mathbf{R}_{\mathfrak{I}_{N}}^{f}\in\mathcal{B}\left(
L_{2}(0,b)\right) $, $\mathbf{R}_{\mathfrak{I}_{N}}^{f}\left( L_{2}%
(0,b)\right) \subset\mathcal{D}_{2}\left( \mathbf{L}_{q,\mathfrak{I}_{N}%
}\right) $ and is a right-inverse for $\mathbf{L}_{q,\mathfrak{I}_{N}}$, i.e.,
$\mathbf{L}_{q,\mathfrak{I}_{N}}\mathbf{R}_{\mathfrak{I}_{N}}^{f} g=g$ for all
$g\in L_{2}(0,b)$.
\end{remark}

Following \cite{sppsoriginal} we define the following recursive integrals:
$\widetilde{X}^{(0)}\equiv X^{(0)} \equiv1$, and for $k\in\mathbb{N}$
%

\begin{align}
\widetilde{X}^{(k)}(x)  &  :=  k\int_{0}^{x}\widetilde{X}^{(k-1)}(s)\left(
f^{2}(s)\right) ^{(-1)^{k-1}}ds,\\
X^{(k)}(x)  &  :=  k\int_{0}^{x}X^{(k-1)}(s)\left( f^{2}(s)\right) ^{(-1)^{k}%
}ds.
\end{align}
The functions $\{\varphi_{f}^{(k)}(x)\}_{k=0}^{\infty}$ defined by
\begin{equation}
\label{formalpowers}\varphi_{f}^{(k)}(x):=
\begin{cases}
f(x)\widetilde{X}^{(k)}(x),\quad\mbox{if } k \mbox{ even},\\
f(x)X^{(k)}(x),\quad\mbox{if } k \mbox{ odd}.
\end{cases}
\end{equation}
for $k\in\mathbb{N}_{0}$, are called the \textit{formal powers} associated to
$f$. Additionally, we introduce the following auxiliary formal powers
$\{\psi_{f}^{(k)}(x)\}_{k=0}^{\infty}$ given by
\begin{equation}
\label{auxiliaryformalpowers}\psi_{f}^{(k)}(x):=
\begin{cases}
\frac{\widetilde{X}^{(k)}(x)}{f(x)},\quad\mbox{if } k \mbox{ odd},\\
\frac{X^{(k)}(x)}{f(x)},\quad\mbox{if } k \mbox{ even}.
\end{cases}
\end{equation}

\begin{remark}
\label{remarkformalpowers}  For each $k \in\mathbb{N}_{0}$, $\varphi_{f}%
^{(k)}\in\mathcal{D}_{2}\left( \mathbf{L}_{q,\mathfrak{I}_{N}}\right) $.
Indeed, direct computations show that the following relations hold for all
$k\in\mathbb{N}_{0}$:
\begin{align}
D\varphi_{f}^{(k)} &  = \frac{f^{\prime}}{f}\varphi_{f}^{(k)}+k\psi
_{f}^{(k-1)}\label{formalpowersderivative}\\
D^{2}\varphi_{f}^{(k)}  &  = \frac{f^{\prime\prime}}{f}\varphi_{f}%
^{(k)}+k(k-1)\varphi_{f}^{(k-2)}\label{Lbasisproperty}%
\end{align}
Since $\varphi_{f}^{(k)}, \psi_{f}^{(k)}\in C[0,b]$, using the procedure from
Remark \ref{remarkidealdomain} and (\ref{formalpowersderivative}) we obtain
$\varphi_{f}^{(k)}\in\mathcal{D}_{2}\left( \mathbf{L}_{q,\mathfrak{I}_{N}%
}\right) $.
\end{remark}

\begin{theorem}
[SPPS method]\label{theoremspps}  Suppose that $f\in\mathcal{D}_{2}\left(
\mathbf{L}_{q,\mathfrak{I}_{N}}\right) $ is a solution of (\ref{Schrwithdelta}%
) that does not vanish in the whole segment $[0,b]$. Then the functions
\begin{equation}
\label{SPPSseries}u_{0}(\rho,x)= \sum_{k=0}^{\infty}\frac{(-1)^{k}\rho
^{2k}\varphi_{f}^{(2k)}(x)}{(2k)!}, \quad u_{1}(\rho,x)= \sum_{k=0}^{\infty
}\frac{(-1)^{k}\rho^{2k}\varphi_{f}^{(2k+1)}(x)}{(2k+1)!}%
\end{equation}
belong to $\mathcal{D}_{2}\left( \mathbf{L}_{q,\mathfrak{I}_{N}}\right) $, and
$\{u_{0}(\rho,x), u_{1}(\rho,x)\}$ is a fundamental set of solutions for
(\ref{Schrwithdelta}) satisfying the initial conditions
\begin{align}
u_{0}(\rho,0)=f(0),  &   u^{\prime}_{0}(\rho,0)=f^{\prime}%
(0),\label{initialspps0}\\
u_{1}(\rho,0)=0,  &   u^{\prime}_{1}(\rho,0)=\frac{1}{f(0)}%
,\label{initialspps1}%
\end{align}
The series in (\ref{SPPSseries}) converge absolutely and uniformly on
$x\in[0,b]$, the series of the derivatives converge in $L_{2}(0,b)$ and the
series of the second derivatives converge in $L_{2}(x_{j},x_{j+1})$, $j=0,
\cdots, N$. With respect to $\rho$ the series converge absolutely and
uniformly on any compact subset of the complex $\rho$-plane.
\end{theorem}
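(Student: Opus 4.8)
The plan is to imitate the classical SPPS argument for regular Schrödinger equations (as in \cite{sppsoriginal,blancarte}), carrying the point interactions through the estimates by working on each subinterval $(x_{j},x_{j+1})$ separately and gluing. First I would establish the formal differential identities \eqref{formalpowersderivative} and \eqref{Lbasisproperty} for the formal powers $\varphi_{f}^{(k)}$ (these are stated in Remark \ref{remarkformalpowers}, so they may be invoked), and also record the companion relation $D\psi_{f}^{(k)}=-\frac{f'}{f}\psi_{f}^{(k)}+k\varphi_{f}^{(k-1)}/f^{2}$ valid a.e.\ on each $(x_{j},x_{j+1})$. From \eqref{Lbasisproperty} one gets, at least formally, $\mathbf{L}_{q}\varphi_{f}^{(k)}=\frac{f''-qf}{f}\varphi_{f}^{(k)}-k(k-1)\varphi_{f}^{(k-2)}$; since $f$ solves $\mathbf{L}_{q,\mathfrak{I}_{N}}f=0$, on each open subinterval $f''=qf$ a.e., so $\mathbf{L}_{q}\varphi_{f}^{(k)}=-k(k-1)\varphi_{f}^{(k-2)}$ a.e.\ on $(x_{j},x_{j+1})$. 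Term-by-term application of $\mathbf{L}_{q}$ to the series \eqref{SPPSseries} then telescopes to $\mathbf{L}_{q}u_{0}=-\rho^{2}u_{0}$ and $\mathbf{L}_{q}u_{1}=-\rho^{2}u_{1}$ a.e.\ on each subinterval, which by Corollary \ref{proppropofsolutions} (together with the membership $\varphi_{f}^{(k)}\in\mathcal{D}_{2}(\mathbf{L}_{q,\mathfrak{I}_{N}})$ from Remark \ref{remarkformalpowers}, which also delivers the jump conditions \eqref{jumpderivative} once uniform convergence of the first derivatives is in hand) upgrades to $\mathbf{L}_{q,\mathfrak{I}_{N}}u_{0}=\rho^{2}u_{0}$ and similarly for $u_{1}$.

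Next I would prove the convergence claims. The key a priori bounds are $|\varphi_{f}^{(k)}(x)|\le \|f\|_{\infty}|\widetilde X^{(k)}(x)|$ or $\|f\|_{\infty}|X^{(k)}(x)|$, and the elementary recursive estimate $|\widetilde X^{(k)}(x)|,|X^{(k)}(x)|\le (Mx)^{k}$, where $M:=\max\{\|f^{2}\|_{\infty},\|f^{-2}\|_{\infty}\}$ (here $f^{-2}$ is bounded because $f$ is continuous and nonvanishing on the compact $[0,b]$). This is proved by a straightforward induction on $k$ using the defining integrals. Consequently the general term of $u_{0}$ is bounded by $\|f\|_{\infty}(|\rho|^{2}(Mb)^{2})^{k}/(2k)!$ and similarly for $u_{1}$, so by the Weierstrass $M$-test both series in \eqref{SPPSseries} converge absolutely and uniformly for $x\in[0,b]$ and, for fixed $x$, are majorized by entire functions of $\rho$ of order $1$, giving locally uniform convergence in $\rho$. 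For the derivatives, \eqref{formalpowersderivative} gives $D\varphi_{f}^{(k)}=\frac{f'}{f}\varphi_{f}^{(k)}+k\psi_{f}^{(k-1)}$; since $f'\in L_{2}(0,b)$ and $f^{-1}\in C[0,b]$ the first summand is an $L_{2}$ function bounded in norm by $C(Mb)^{k}$, while $|\psi_{f}^{(k-1)}(x)|\le \|f^{-1}\|_{\infty}(Mb)^{k-1}$, so the differentiated series is dominated in $L_{2}(0,b)$ by $\sum |\rho|^{2k}(Mb)^{2k-1}C/(2k-1)!<\infty$; hence it converges in $L_{2}(0,b)$ and equals $u_{0}'$ (resp.\ $u_{1}'$) as a weak derivative. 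For the second derivatives one works on a fixed $(x_{j},x_{j+1})$: there $D^{2}\varphi_{f}^{(k)}=\frac{f''}{f}\varphi_{f}^{(k)}+k(k-1)\varphi_{f}^{(k-2)}$ with $f''=qf\in L_{2}(x_{j},x_{j+1})$, so $\|D^{2}\varphi_{f}^{(k)}\|_{L_{2}(x_{j},x_{j+1})}\le C_{j}(Mb)^{k}+k(k-1)\|f\|_{\infty}(Mb)^{k-2}$, and the series of second derivatives converges in $L_{2}(x_{j},x_{j+1})$.

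Finally I would verify the initial conditions. From $X^{(k)}(0)=\widetilde X^{(k)}(0)=0$ for $k\ge1$ and $X^{(0)}=\widetilde X^{(0)}=1$ we get $\varphi_{f}^{(0)}(0)=f(0)$, $\varphi_{f}^{(k)}(0)=0$ for $k\ge1$, whence $u_{0}(\rho,0)=f(0)$ and $u_{1}(\rho,0)=0$. Differentiating term-by-term (justified by the $L_{2}$-convergence of the derivative series plus continuity at $0$ coming from \eqref{formalpowersderivative}, since each $D\varphi_{f}^{(k)}$ has a right limit at $0$) and using $D\varphi_{f}^{(k)}(0)=\frac{f'(0)}{f(0)}\varphi_{f}^{(k)}(0)+k\psi_{f}^{(k-1)}(0)$ together with $\psi_{f}^{(0)}(0)=1/f(0)$, $\psi_{f}^{(k)}(0)=0$ for $k\ge1$, one finds $u_{0}'(\rho,0)=f'(0)$ and $u_{1}'(\rho,0)=1/f(0)$, which are \eqref{initialspps0}--\eqref{initialspps1}. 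The Wronskian $W[u_{0}(\rho,\cdot),u_{1}(\rho,\cdot)](0)=f(0)\cdot\frac{1}{f(0)}-0\cdot f'(0)=1\ne0$, so by Remark \ref{remarkwronskian} the pair is a fundamental system. I expect the main obstacle to be the bookkeeping needed to pass $\mathbf{L}_{q}$ through the series and recover the distributional jump conditions simultaneously — i.e., making precise that uniform convergence of $\{\varphi_{f}^{(k)}\}$ and $L_{2}$-convergence of $\{D\varphi_{f}^{(k)}\}$ together let the limit inherit membership in $\mathcal{D}_{2}(\mathbf{L}_{q,\mathfrak{I}_{N}})$, so that Corollary \ref{proppropofsolutions} applies on each subinterval and the pieces assemble into a genuine solution of \eqref{Schrwithdelta} on all of $(0,b)$; the purely analytic estimates are routine once the recursive bound $|\widetilde X^{(k)}|,|X^{(k)}|\le(Mx)^{k}$ is in place.
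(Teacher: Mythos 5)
Your proposal follows essentially the same route as the paper's proof: the Weierstrass $M$-test via the recursive bounds on $\widetilde X^{(k)},X^{(k)}$, term-by-term differentiation using \eqref{formalpowersderivative}--\eqref{Lbasisproperty} with the jump condition inherited from the factor $f'/f$ multiplying the uniformly convergent series of continuous formal powers, the telescoping of $\mathbf{L}_q$ on each subinterval $(x_j,x_{j+1})$, the initial conditions from $\widetilde X^{(k)}(0)=X^{(k)}(0)=0$, and the Wronskian at $0$ combined with Remark \ref{remarkwronskian}. The only blemishes are two sign slips --- the correct intermediate identities are $\mathbf{L}_q\varphi_f^{(k)}=\frac{qf-f''}{f}\varphi_f^{(k)}-k(k-1)\varphi_f^{(k-2)}$ and $\mathbf{L}_q u_0=+\rho^2 u_0$ (not $-\rho^2u_0$) --- which do not affect the argument, since the offending term vanishes ($f''=qf$ a.e.\ on each subinterval) and you state the correct final equation $\mathbf{L}_{q,\mathfrak{I}_N}u_0=\rho^2u_0$.
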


\begin{proof}
Since $f\in C[0,b]$, the following estimates for the recursive integrals
$\{\widetilde{X}^{(k)}(x)\}_{k=0}^{\infty}$ and $\{X^{(k)}(x)\}_{k=0}^{\infty
}$ are known:
\begin{equation}
\label{auxiliarestimatesspps}|\widetilde{X}^{(n)}(x)|\leqslant M_{1}^{n}
b^{n}, \; |X^{(n)}(x)|\leqslant M_{1}^{n} b^{n}\quad\mbox{for all } x\in[0,b],
\end{equation}
where $M_{1}=\|f^{2}\|_{C[0,b]}\cdot\left\| \frac{1}{f^{2}}\right\| _{C[0,b]}$
(see the proof of Theorem 1 of \cite{sppsoriginal}). Thus, by the Weierstrass
$M$-tests, the series in (\ref{SPPSseries}) converge absolutely and uniformly
on $x\in[0,b]$, and for $\rho$ on any compact subset of the complex $\rho
$-plane. We prove that $u_{0}(\rho,x)\in\mathcal{D}_{2}\left( \mathbf{L}%
_{q,\mathfrak{I}_{N}}\right) $ and is a solution of (\ref{Schrwithdelta}) (the
proof for $u_{1}(\rho,x)$ is analogous). By Remark \ref{remarkformalpowers},
the series of the derivatives of $u_{0}(\rho,x)$ is given by $\frac{f^{\prime
}}{f}\sum_{k=0}^{\infty}\frac{(-1)^{k}\rho^{2k}\varphi_{f}^{(2k)}}{(2k)!}%
+\sum_{k=1}^{\infty}\frac{(-1)^{k}\rho^{2k}\psi_{f}^{(2k-1)}}{(2k-1)!}$. By
(\ref{auxiliarestimatesspps}), the series involving the formal powers
$\varphi_{f}^{(k)}$ and $\psi_{f}^{(k)}$ converge absolutely and uniformly on
$x\in[0,b]$. Hence, $\sum_{k=0}^{\infty}\frac{(-1)^{k}\rho^{k}D\varphi
_{f}^{(2k)}(x)}{(2k)!}$ converges in $L_{2}(0,b)$. Due to \cite[Prop.
3]{blancarte}, $u_{0}(\rho,\cdot)\in AC[0,b]$ and $u_{0}^{\prime}%
(\rho,x)=\frac{f^{\prime}(x)}{f(x)}\sum_{k=0}^{\infty}\frac{(-1)^{k}\rho
^{2k}\varphi_{f}^{(2k)}}{(2k)!}+\sum_{k=1}^{\infty}\frac{(-1)^{k}\rho^{2k}%
\psi_{f}^{(2k-1)}}{(2k-1)!}$ in $L_{2}(0,b)$. Since the series involving the
formal powers defines continuous functions, then $u_{0}(\rho,x)$ satisfies the
jump condition (\ref{jumpderivative}). Applying the same reasoning it is shown
that $u_{0}^{\prime\prime}(\rho,x)=\sum_{k=0}^{\infty}\frac{(-1)^{k}\rho
^{2k}D^{2}\varphi_{f}^{(2k)}}{(2k)!}$, the series converges in $L_{2}%
(x_{j},x_{j+1})$ and $u_{0}(\rho,\cdot)|_{(x_{j},x_{j+1})}\in H^{2}%
(x_{j},x_{j+1})$, $j=0, \dots, N$.

Since $\widetilde{X}^{(n)}(0)=0$ for $n\geqslant1$, we have
(\ref{initialspps0}). Finally, by (\ref{Lbasisproperty})
\begin{align*}
\mathbf{L}_{q}u_{0}(\rho,x)  &  = \sum_{k=0}^{\infty}\frac{(-1)^{k}\rho
^{2k}\mathbf{L}_{q}\varphi_{f}^{(2k)}(x)}{(2k)!}=\sum_{k=2}^{\infty}%
\frac{(-1)^{k+1}\rho^{2k}\varphi_{f}^{(2k-2)}(x)}{(2k-2)!}\\
& = \rho^{2}\sum_{k=0}^{\infty}\frac{(-1)^{k}\rho^{2k}\varphi_{f}^{(2k)}%
(x)}{(2k)!}=\rho^{2}u(\rho,x),
\end{align*}
this for a.e. $x\in(x_{j},x_{j+1})$, $j=0, \dots, N$.

Using (\ref{initialspps0}) and (\ref{initialspps1}) we obtain $W[u_{0}%
(\rho,x),u_{1}(\rho,x)](0)=1$. Since the Wronskian is constant (Remark
\ref{remarkwronskian}), $\{u_{0}(\rho,x), u_{1}(\rho,x)\}$ is a fundamental
set of solutions.
\end{proof}

\subsection{Existence and construction of the non-vanishing solution}

The existence of a non-vanishing solution is well known for the case of a
regular Schr\"odinger equation with continuous potential (see \cite[Remark
5]{sppsoriginal} and \cite[Cor. 2.3]{camporesi}). The following proof adapts
the one presented in \cite[Prop. 2.9]{nelsonspps} for the Dirac system.

\begin{proposition}
[Existence of non-vanishing solutions]\label{propnonvanishingsol}  Let
$\{u,v\}\in\mathcal{D}_{2}\left( \mathbf{L}_{q,\mathfrak{I}_{N}}\right) $ be a
fundamental set of solutions for (\ref{Schrwithdelta}). Then there exist
constants $c_{1},c_{2}\in\mathbb{C}$ such that the solution $f=c_{1}u+c_{2}v$
does not vanish in the whole segment $[0,b]$.
\end{proposition}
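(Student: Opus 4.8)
The plan is to construct $f$ as a suitable complex linear combination of $u$ and $v$ and to rule out common zeros by a compactness/argument-principle type consideration. First I would pass to the two continuous functions $u,v\in AC[0,b]$ and consider, for each $x\in[0,b]$, the zero set $Z(x)=\{(c_1,c_2)\in\mathbb{C}^2:c_1u(x)+c_2v(x)=0\}$. If $(u(x),v(x))\neq(0,0)$ this is a complex line through the origin in $\mathbb{C}^2$, i.e. after projectivizing a single point of $\mathbb{P}^1(\mathbb{C})$; if $(u(x),v(x))=(0,0)$ it is all of $\mathbb{C}^2$, but this cannot happen because $W[u,v]\equiv1\neq0$ (Remark \ref{remarkwronskian}) forces $(u(x),v(x))$ and $(u'(x\pm),v'(x\pm))$ never to vanish simultaneously — in particular $(u(x),v(x))\neq(0,0)$ for every $x$. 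So for each $x$ the "bad" set of coefficients is a single point $[\,{-v(x)}:u(x)\,]\in\mathbb{P}^1(\mathbb{C})$, and we must show the union of these bad points over all $x\in[0,b]$ is not all of $\mathbb{P}^1(\mathbb{C})$.

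The key step is a dimension/measure argument: the map $\Phi:[0,b]\to\mathbb{P}^1(\mathbb{C})$, $x\mapsto[-v(x):u(x)]$, is continuous, hence its image is a compact subset of the real $2$-manifold $\mathbb{P}^1(\mathbb{C})\cong S^2$. Being a continuous image of a $1$-dimensional interval, this image has empty interior and indeed two-dimensional Lebesgue measure zero in $S^2$ (one can invoke that a compact set covered by finitely many Lipschitz charts, as a continuous image of $[0,b]$, cannot contain a ball — or simply that $\Phi([0,b])$ has Hausdorff dimension at most $1$). Therefore there exists a point $[\,c_2:-c_1\,]\in\mathbb{P}^1(\mathbb{C})$ (equivalently a pair $(c_1,c_2)\neq(0,0)$, normalized as desired) that lies outside $\Phi([0,b])$. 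For such $(c_1,c_2)$ we have $c_1u(x)+c_2v(x)\neq0$ for all $x\in[0,b]$, since $c_1u(x)+c_2v(x)=0$ would mean $[-v(x):u(x)]=[c_2:-c_1]$ — wait, let me restate cleanly: $c_1u(x)+c_2v(x)=0$ with $(u(x),v(x))\neq 0$ means $(c_1,c_2)$ is a multiple of $(-v(x),u(x))$, i.e. $[c_1:c_2]=[-v(x):u(x)]=\Phi(x)$; choosing $[c_1:c_2]\notin\Phi([0,b])$ avoids this for every $x$. Set $f=c_1u+c_2v$; by linearity $f\in\mathcal{D}_2(\mathbf{L}_{q,\mathfrak{I}_N})$ is a solution of (\ref{Schrwithdelta}) and $f(x)\neq0$ on $[0,b]$.

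An alternative, more elementary route that avoids measure theory on $S^2$: work with the single meromorphic-free ratio. On $[0,b]$ at least one of $u,v$ is nonzero at each point; cover $[0,b]$ by finitely many closed subintervals on each of which, say, $u$ has no zero (possible because the zeros of $u$ form a closed set disjoint from the zeros of $v$, and by compactness one can interleave). On an interval where $u\neq0$, $c_1u+c_2v$ vanishes at $x$ iff $c_1/c_2=-v(x)/u(x)$; the function $x\mapsto -v(x)/u(x)$ is continuous on that interval, so its image is a compact arc in $\mathbb{C}$, and finitely many such arcs still omit some point of $\mathbb{C}\cup\{\infty\}$ — equivalently there is a finite collection of compact proper subsets of $\mathbb{P}^1(\mathbb{C})$ whose union is proper, since a finite union of arcs (each nowhere dense) is nowhere dense by Baire. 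Pick $(c_1,c_2)$ realizing an omitted value; then $f=c_1u+c_2v$ never vanishes. The main obstacle, in either approach, is simply the bookkeeping that $(u(x),v(x))$ is never the zero vector — which is exactly where $W[u,v]\equiv1$ enters — and making precise that a continuous image of an interval is too "small" to exhaust $\mathbb{P}^1(\mathbb{C})$; both are standard, so the proof is short once organized this way.
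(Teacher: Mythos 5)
Your overall strategy is the same as the paper's: rule out common zeros of $u,v$ via the constancy of the Wronskian, projectivize to view the ``bad'' coefficient pairs as the image of a map from $[0,b]$ into $\mathbb{CP}^{1}$, and pick a point outside that image. However, there is a genuine gap at the crucial step: you justify the smallness of the image $\Phi([0,b])$ by \emph{continuity alone}, asserting that ``a continuous image of a $1$-dimensional interval has empty interior and two-dimensional Lebesgue measure zero'' and that it ``has Hausdorff dimension at most $1$.'' Both claims are false: Peano (space-filling) curves are continuous surjections of $[0,1]$ onto $[0,1]^{2}$, and composing with a surjection onto $S^{2}\cong\mathbb{CP}^{1}$ gives a continuous map of an interval whose image is \emph{all} of $\mathbb{CP}^{1}$. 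Continuous maps do not bound Hausdorff dimension; only Lipschitz (or H\"older, with a loss) maps do. The same defect infects your ``more elementary route'': the image of the continuous function $x\mapsto -v(x)/u(x)$ on a subinterval is a compact connected set, not an ``arc,'' and it need not be nowhere dense, so the Baire argument does not get off the ground as stated.

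The paper closes exactly this gap by exploiting regularity beyond continuity: since $u,v\in\mathcal{D}_{2}\left(\mathbf{L}_{q,\mathfrak{I}_{N}}\right)$, their restrictions to each $[x_{j},x_{j+1}]$ lie in $H^{2}(x_{j},x_{j+1})\subset C^{1}[x_{j},x_{j+1}]$, so the maps $F_{j}:[x_{j},x_{j+1}]\to\mathbb{CP}^{1}$ are \emph{differentiable}, and by Sard's theorem (via the cited result of Camporesi--Di Scala) a differentiable map from an interval into $\mathbb{CP}^{1}$ has image of measure zero, hence is never surjective; the bad set $C$ is contained in the finite union $\bigcup_{j=0}^{N}F_{j}([x_{j},x_{j+1}])$ and therefore has measure zero. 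Your argument is repairable along the same lines --- replace ``continuous'' by ``piecewise $C^{1}$, hence a finite union of Lipschitz images of intervals'' --- but as written the decisive step rests on a false general principle, so the proof is incomplete.
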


\begin{proof}
Let $\{u,v\}\in\mathcal{D}_{2}\left( \mathbf{L}_{q,\mathfrak{I}_{N}}\right) $
be a fundamental set of solutions for (\ref{Schrwithdelta}). Then $u$ and $v$
cannot have common zeros in $[0,b]$. Indeed, if $u(\xi)=v(\xi)=0$ for some
$\xi\in[0,b]$, then $W[u,v](\xi+)=u(\xi)v^{\prime}(\xi+)-v(\xi)u^{\prime}%
(\xi+)=0$. Since $W[u,v]$ is constant in $[0,b]$, this contradicts that
$\{u,v\}$ is a fundamental system.

This implies that in each interval $[x_{j},x_{j+1}]$, $j=0, \cdots, N$, the
map $F_{j}: [x_{j}, x_{j+1}]\rightarrow\mathbb{CP}^{1}$, $F_{j}(x):=\left[
u|_{[x_{j}, x_{j+1}]}(x) :v|_{[x_{j}, x_{j+1}]}(x)\right] $ (where
$\mathbb{CP}^{1}$ is the complex projective line, i.e., the quotient of
$\mathbb{C}^{2}\setminus\{(0,0)\}$ under the action of $\mathbb{C}^{*}$, and
$[a:b]$ denotes the equivalent class of the pair $(a,b)$) is well defined and
differentiable. In \cite[Prop. 2.2]{camporesi} it was established that a
differentiable function $f: I\rightarrow\mathbb{CP}^{1}$, where $I\subset
\mathbb{R}$ is an interval, is never surjective, using that Sard's theorem
implies that $f(I)$ has measure zero.

Suppose that $(\alpha,\beta)\in\mathbb{C}^{2}\setminus\{(0,0)\}$ is such that
$\alpha u(\xi)-\beta v(\xi)=0$ for some $\xi\in[0,b]$. Hence  $%
\begin{vmatrix}
u(\xi) & \beta\\
v(\xi) & \alpha
\end{vmatrix}
=0$, that is, $(u(\xi), v(\xi))$ and $(\alpha,\beta)$ are proportional. Since
$\xi\in[x_{j},x_{j+1}]$ for some $j\in\{0,\cdots, N\}$, hence $[\alpha
:-\beta]\in F_{j}\left(  [x_{j}, x_{j+1}]\right) $.

Thus, the set $C:= \left\{  [\alpha:\beta]\in\mathbb{CP}^{1}\, |\,\exists
\xi\in[0,b] \,:\, \alpha u(\xi)+\beta v(\xi)=0\right\} $ is contained in
\newline$\cup_{j=0}^{N}F_{j}\left(  [x_{j}, x_{j+1}]\right) $, and then $C$
has measure zero. Hence we can obtain a pair of constants $(c_{1},c_{2})
\in\mathbb{C}^{2}\setminus\{ (0,0)\}$ with $[c_{1}:-c_{2}]\in\mathbb{CP}%
^{1}\setminus C$ and $f=c_{1}u+c_{2}v$ does not vanish in the whole segment
$[0,b]$.
\end{proof}

\begin{remark}
If $q$ is real valued and $\alpha_{1}, \cdots, \alpha_{N}\in\mathbb{R}%
\setminus\{0\}$, taking a real-valued fundamental system of solutions for the
regular equation $\mathbf{L}_{q} y=0$ and using formula
(\ref{generalsolcauchy}), we can obtain a real-valued fundamental set of
solutions $\{u,v\}$ for $\mathbf{L}_{q,\mathfrak{I}_{N}}y=0$. In the proof of
Proposition \ref{propnonvanishingsol} we obtain that $u$ and $v$ have no
common zeros. Hence $f=u+iv$ is a non vanishing solution.

For the complex case, we can choose randomly a pair of constants $(c_{1}%
,c_{2})\in\mathbb{C}^{2}\setminus\{(0,0)\}$ and verify if the linear
combination $c_{1}u+c_{2}v$ has no zero. If there is a zero, we repeat the
process until we find the non vanishing solution. Since the set $C$ (from the
proof of Proposition \ref{propnonvanishingsol}) has measure zero, is almost
sure to find the coefficients $c_{1},c_{2}$ in the first few tries.
\end{remark}

By Proposition \ref{propnonvanishingsol}, there exists a pair of constants
$(c_{1},c_{2})\in\mathbb{C}^{2}\setminus\{(0,0)\}$ such that
\begin{align}
y_{0}(x) &  = c_{1}+c_{2}x+\sum_{k=1}^{N}\alpha_{k}(c_{1}+c_{2}x_{k}%
)H(x-x_{k})(x-x_{k})\nonumber\\
& \quad+\sum_{J\in\mathcal{J}_{N}}\alpha_{J}(c_{1}+c_{2}x_{j_{1}%
})H(x-x_{j_{|J|}})\left( \prod_{l=1}^{|J|-1}(x_{j_{l+1}}-x_{j_{1}})\right)
(x-x_{j_{|J|}})\label{nonvanishingsolqzero}%
\end{align}
is a non-vanishing solution of (\ref{Schrwithdelta}) for $\rho=0$ (if
$\alpha_{1}, \dots,\alpha_{k} \in(0,\infty)$, it is enough with take $c_{1}%
=1$, $c_{2}=0$). Below we give a procedure based on the SPPS method
(\cite{blancarte,sppsoriginal}) to obtain the non-vanishing solution $f$ from
$y_{0}$.

\begin{theorem}
Define the recursive integrals $\{Y^{(k)}\}_{k=0}^{\infty}$ and $\{\tilde
{Y}^{(k)}\}_{k=0}^{\infty}$ as follows: $Y^{(0)}\equiv\tilde{Y}^{(0)}\equiv1$,
and for $k\geqslant1$
\begin{align}
Y^{(k)}(x)  &  =
\begin{cases}
\int_{0}^{x} Y^{(k)}(s)q(s)y_{0}^{2}(s)ds, & \mbox{ if  } k \mbox{ is even},\\
\int_{0}^{x} \frac{Y^{(k)}(s)}{y_{0}^{2}(s)}ds, & \mbox{ if  } k
\mbox{ is odd},
\end{cases}
\\
\tilde{Y}^{(k)}(x)  &  =
\begin{cases}
\int_{0}^{x} \tilde{Y}^{(k)}(s)q(s)y_{0}^{2}(s)ds, & \mbox{ if  } k
\mbox{ is odd},\\
\int_{0}^{x} \frac{\tilde{Y}^{(k)}(s)}{y_{0}^{2}(s)}ds, & \mbox{ if  } k
\mbox{ is even}.
\end{cases}
\end{align}
Define
\begin{equation}
\label{sppsforparticularsol}f_{0}(x)= y_{0}(x)\sum_{k=0}^{\infty}\tilde
{Y}^{(2k)}(x), \qquad f_{1}(x) = y_{0}(x)\sum_{k=0}^{\infty}Y^{(2k+1)}(x).
\end{equation}
Then $\{f_{0},f_{1}\}\subset\mathcal{D}_{2}\left( \mathbf{L}_{q,
\mathfrak{I}_{N}}\right) $ is a fundamental set of solution for $\mathbf{L}%
_{q,\mathfrak{I}_{N}}u=0$ satisfying the initial conditions $f_{0}(0)=c_{1}$,
$f^{\prime}_{0}(0)=c_{2}$, $f_{1}(0)=0$, $f^{\prime}_{1}(0)=1$. Both series
converge uniformly and absolutely on $x\in[0,b]$. The series of the
derivatives converge in $L_{2}(0,b)$, and on each interval $[x_{j},x_{j+1}]$,
$j=0, \dots, N$, the series of the second derivatives converge in $L_{2}%
(x_{j}, x_{j+1})$. Hence there exist constants $C_{1},C_{2}\in\mathbb{C}$ such
that $f=C_{1}f_{0}+C_{2}f_{1}$ is a non-vanishing solution of $\mathbf{L}_{q,
\mathfrak{I}_{N}}u=0$ in $[0,b]$.
\end{theorem}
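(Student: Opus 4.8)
This is a standard SPPS-type argument, parallel to Theorem~\ref{theoremspps}, but now built on the $\rho=0$ seed solution $y_0$ instead of a generic non-vanishing $f$; I would simply adapt that proof. First I would record that $y_0$ given by \eqref{nonvanishingsolqzero} is indeed a non-vanishing element of $\mathcal{D}_2(\mathbf{L}_{q,\mathfrak{I}_N})$ solving $\mathbf{L}_{q,\mathfrak{I}_N}u=0$: non-vanishing comes from Proposition~\ref{propnonvanishingsol}, and membership in $\mathcal{D}_2$ together with solving the equation follows from Corollary~\ref{proppropofsolutions} (it is a solution of the regular equation $-y''=0$ on each $(x_k,x_{k+1})$ with the correct jumps). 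Since $y_0$ need not be continuous in its derivative across the $x_k$, the key point is that $1/y_0^2$ and $q y_0^2$ are still in $L_2(0,b)$ (indeed bounded, resp.\ $L_2$), so the recursive integrals $Y^{(k)},\tilde Y^{(k)}$ are well defined; moreover each is absolutely continuous on $[0,b]$, and $Y^{(k)},\tilde Y^{(k)}$ are continuous on $[0,b]$ with $Y^{(k)}(0)=\tilde Y^{(k)}(0)=0$ for $k\geqslant 1$.

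\medskip\noindent
Next I would establish the uniform estimates $|Y^{(k)}(x)|\leqslant (M_2 b)^{k}/\lfloor k/2\rfloor!\cdot(\text{const})$-type bounds, or more simply the crude bounds $|Y^{(k)}(x)|,|\tilde Y^{(k)}(x)|\leqslant (M_2 b)^{k}$ where $M_2=\max\{\|1/y_0^2\|_{C[0,b]},\,\|q y_0^2\|_{L_1(0,b)}\}$ (this is exactly the estimate used in the proof of Theorem~1 of \cite{sppsoriginal} and in \cite[Prop.~3]{blancarte}, now applied on the intervals $(x_j,x_{j+1})$ and patched using continuity at the $x_k$). These estimates give absolute and uniform convergence of the two series in \eqref{sppsforparticularsol} on $[0,b]$ by the Weierstrass $M$-test. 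For the derivatives I would use the algebraic identities
\[
\bigl(y_0\tilde Y^{(k)}\bigr)'=\frac{y_0'}{y_0}\,y_0\tilde Y^{(k)}+\frac{\tilde Y^{(k-1)}}{y_0},\qquad
\bigl(y_0 Y^{(k)}\bigr)'=\frac{y_0'}{y_0}\,y_0 Y^{(k)}+\frac{Y^{(k-1)}}{y_0},
\]
valid a.e.\ on each $(x_j,x_{j+1})$, exactly as in Remark~\ref{remarkformalpowers}; the termwise-differentiated series then converge in $L_2(0,b)$ by the same $M$-bounds, so by \cite[Prop.~3]{blancarte} the sums $f_0,f_1$ lie in $AC[0,b]$ with these $L_2$-derivatives. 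Continuity of the coefficient functions forces the jump condition \eqref{jumpderivative} to hold, and differentiating once more (again using a Polya-factorization identity analogous to \eqref{Lbasisproperty}, namely $\mathbf{L}_q(y_0\tilde Y^{(2k)})=-q y_0 \tilde Y^{(2k-2)}$-type relations) shows $f_0,f_1\in H^2(x_j,x_{j+1})$ and $\mathbf{L}_{q}f_0=\mathbf{L}_q f_1=0$ a.e.\ on each subinterval. Hence $f_0,f_1\in\mathcal{D}_2(\mathbf{L}_{q,\mathfrak{I}_N})$ and both solve $\mathbf{L}_{q,\mathfrak{I}_N}u=0$.

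\medskip\noindent
The initial conditions $f_0(0)=y_0(0)=c_1$, $f_0'(0)=y_0'(0)=c_2$, $f_1(0)=0$, $f_1'(0)=1$ follow by evaluating the series at $x=0$ using $Y^{(k)}(0)=\tilde Y^{(k)}(0)=0$ for $k\geqslant1$ and $Y^{(1)}(0)=0$, $(Y^{(1)})'(0)=1/y_0^2(0)$, $y_0(0)=c_1$; from these and the constancy of the Wronskian (Remark~\ref{remarkwronskian}) one gets $W[f_0,f_1](0)=c_1\cdot\frac{1}{c_1}=1\neq 0$, so $\{f_0,f_1\}$ is a fundamental set. Finally, Proposition~\ref{propnonvanishingsol} applied to the fundamental set $\{f_0,f_1\}$ yields constants $C_1,C_2$ with $f=C_1 f_0+C_2 f_1$ non-vanishing on $[0,b]$. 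The main obstacle — really the only place where care is needed beyond transcribing the regular-potential proof — is handling the lack of global $C^1$-regularity: one must verify the recursive-integral estimates and the termwise-differentiation argument interval-by-interval on $(x_j,x_{j+1})$ and then splice at the points $x_k$ using continuity of $y_0$, $Y^{(k)}$, $\tilde Y^{(k)}$, checking that the resulting $f_0,f_1$ pick up precisely the jump \eqref{jumpderivative} and nothing spurious.
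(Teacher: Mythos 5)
Your overall route is the same as the paper's: bound the recursive integrals, get absolute and uniform convergence, differentiate termwise using the Polya-factorization identities on each $(x_j,x_{j+1})$, splice across the points $x_k$ via continuity to get the jump condition, and finish with the Wronskian at $0$ and Proposition \ref{propnonvanishingsol}. The paper's own proof is much terser: it records the factorial estimates
\[
|\tilde{Y}^{(2n-j)}(x)|\leqslant\frac{M_{1}^{\,n-j}M_{2}^{\,n}}{(n-j)!\,n!},\qquad
|Y^{(2n-j)}(x)|\leqslant\frac{M_{1}^{\,n}M_{2}^{\,n-j}}{n!\,(n-j)!},\qquad j=0,1,
\]
with $M_{1}=\|1/y_{0}^{2}\|_{L_{1}(0,b)}$ and $M_{2}=\|qy_{0}^{2}\|_{L_{1}(0,b)}$ (citing Prop.~5 of \cite{blancarte}), and then declares the rest analogous to Theorem \ref{theoremspps}.

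There is one point where your argument as written would fail. You offer, as a ``simpler'' alternative, the geometric bounds $|Y^{(k)}(x)|,|\tilde Y^{(k)}(x)|\leqslant (M_2 b)^{k}$ and assert that these already give convergence by the Weierstrass $M$-test. They do not. In Theorem \ref{theoremspps} the series terms carry the factors $\rho^{2k}/(2k)!$, so geometric bounds on the recursive integrals suffice; but the series $\sum_{k}\tilde Y^{(2k)}$ and $\sum_{k}Y^{(2k+1)}$ defining $f_0$ and $f_1$ have no factorial coefficients at all, and a majorant $\sum_k (M_2b)^{2k}$ diverges whenever $M_2b\geqslant 1$. The convergence rests entirely on the Picard-iteration-type factorial decay of the iterated integrals themselves, i.e.\ your first-mentioned ``$(M_2b)^{k}/\lfloor k/2\rfloor!$-type'' bounds, which is exactly what the paper uses; the pointer to ``the estimate used in the proof of Theorem 1 of \cite{sppsoriginal}'' is the wrong one in this context. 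Apart from this, your treatment is sound and matches the paper's intent; note only that your Wronskian computation $W[f_0,f_1](0)=c_1\cdot\tfrac{1}{c_1}=1$ implicitly (and correctly) uses $f_1'(0)=1/c_1$ rather than the value $f_1'(0)=1$ written in the statement.
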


\begin{proof}
Using the estimates
\[
|\tilde{Y}^{(2k-j)}(x)|\leqslant\frac{M_{1}^{(n-j)}M_{2}^{n}}{(n-j)!n!},
\quad|Y^{(2k-j)}(x)|\leqslant\frac{M_{1}^{n}M_{2}^{(n-j)}}{n!(n-j)!}, \quad
x\in[0,b], \; j=0,1,\; k \in\mathbb{N},
\]
where $M_{1}= \left\| \frac{1}{y_{0}^{2}}\right\| _{L_{1}(0,b)}$ and $M_{2}=
\|qy_{0}^{2}\|_{L_{1}(0,b)}$, from \cite[Prop. 5]{blancarte}, the series in
(\ref{sppsforparticularsol}) converge absolutely and uniformly on $[0,b]$. The
proof of the convergence of the derivatives and that $\{f_{0},f_{1}%
\}\in\mathcal{D}_{2}\left( \mathbf{L}_{q, \mathfrak{I}_{N}}\right) $ is a
fundamental set of solutions is analogous to that of Theorem \ref{theoremspps}
(see also \cite[Th. 1]{sppsoriginal}) and \cite[Th. 7]{blancarte} for the
proof in the regular case).
\end{proof}

\subsection{The mapping property}

Take a non vanishing solution $f\in\mathcal{D}_{2}\left( \mathbf{L}%
_{q,\mathfrak{I}_{N}}\right) $ normalized at zero, i.e., $f(0)=1$, and set
$h=f^{\prime}(0)$. Then the corresponding transmutation operator and kernel
$\mathbf{T}^{h}_{\mathfrak{I}_{N}}$ and $K_{\mathfrak{I}_{N}}^{h}(x,t)$ will
be denoted by $\mathbf{T}^{f}_{\mathfrak{I}_{N}}$ and $K_{\mathfrak{I}_{N}%
}^{f}(x,t)$ and called the \textit{canonical} transmutation operator and
kernel associated to $f$, respectively (same notations are used for the cosine
and sine transmutations).

\begin{theorem}
\label{theoretransprop}  The canonical transmutation operator $\mathbf{T}%
^{f}_{\mathfrak{I}_{N}}$ satisfies the following relations
\begin{equation}
\label{transmproperty}\mathbf{T}_{\mathfrak{I}_{N}}^{f}\left[ x^{k}\right] =
\varphi_{f}^{(k)}(x) \qquad\forall k\in\mathbb{N}_{0}.
\end{equation}
The canonical cosine and sine transmutation operators satisfy the relations
\begin{align}
\mathbf{T}_{\mathfrak{I}_{N},f}^{C}\left[ x^{2k}\right]   &  =  \varphi
_{f}^{(2k)}(x) \qquad\forall k\in\mathbb{N}_{0}.\label{transmpropertycos}\\
\mathbf{T}_{\mathfrak{I}_{N}}^{S}\left[ x^{2k+1}\right]   &  =  \varphi
_{f}^{(2k+1)}(x) \qquad\forall k\in\mathbb{N}_{0}.\label{transmpropertysin}%
\end{align}

\end{theorem}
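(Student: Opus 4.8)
The key identity to establish is $\mathbf{T}_{\mathfrak{I}_{N}}^{f}[e^{i\rho x}] = e_{\mathfrak{I}_{N}}^{f}(\rho,x)$ from \eqref{transmutationrelation1}, combined with the SPPS representation of $e_{\mathfrak{I}_{N}}^{f}$. The plan is to expand both sides in powers of $\rho$ and match coefficients. First I would observe that, by Theorem \ref{theoremspps} applied to the nonvanishing solution $f$ (normalized so $f(0)=1$, $h=f'(0)$), the solution $e_{\mathfrak{I}_{N}}^{f}(\rho,x)$ — which satisfies $e_{\mathfrak{I}_{N}}^{f}(\rho,0)=1$ and $(e_{\mathfrak{I}_{N}}^{f})'(\rho,0)=i\rho+h$ — can be written as a linear combination $e_{\mathfrak{I}_{N}}^{f}(\rho,x) = u_{0}(\rho,x) + i\rho\, u_{1}(\rho,x)$ of the canonical SPPS solutions, using the initial conditions \eqref{initialspps0}–\eqref{initialspps1} and $f(0)=1$. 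Substituting the series \eqref{SPPSseries} gives
\[
e_{\mathfrak{I}_{N}}^{f}(\rho,x) = \sum_{k=0}^{\infty}\frac{(-1)^{k}\rho^{2k}\varphi_{f}^{(2k)}(x)}{(2k)!} + i\rho\sum_{k=0}^{\infty}\frac{(-1)^{k}\rho^{2k}\varphi_{f}^{(2k+1)}(x)}{(2k+1)!} = \sum_{k=0}^{\infty}\frac{(i\rho)^{k}}{k!}\varphi_{f}^{(k)}(x),
\]
where the last equality regroups even and odd indices and uses $(-1)^{k} = i^{2k}$. This series converges absolutely and uniformly on $[0,b]\times\{|\rho|\le R\}$ for every $R$, again by Theorem \ref{theoremspps}.

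Next I would expand the left-hand side. Applying $\mathbf{T}_{\mathfrak{I}_{N}}^{f}$ to the Taylor series $e^{i\rho x} = \sum_{k=0}^{\infty}\frac{(i\rho)^{k}}{k!}x^{k}$, and justifying the interchange of $\mathbf{T}_{\mathfrak{I}_{N}}^{f}$ (a bounded operator on $L_{2}(-b,b)$ by Theorem \ref{thoremtransmoperator}) with the sum — the partial sums converge to $e^{i\rho x}$ in $L_{2}(-b,b)$ uniformly for $|\rho|\le R$ — yields
\[
e_{\mathfrak{I}_{N}}^{f}(\rho,x) = \mathbf{T}_{\mathfrak{I}_{N}}^{f}\!\left[e^{i\rho x}\right] = \sum_{k=0}^{\infty}\frac{(i\rho)^{k}}{k!}\,\mathbf{T}_{\mathfrak{I}_{N}}^{f}\!\left[x^{k}\right]
\]
as an identity in $L_{2}(0,b)$ for each fixed $\rho$. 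Comparing the two power series in $\rho$: both $\sum_{k}\frac{(i\rho)^{k}}{k!}\varphi_{f}^{(k)}(x)$ and $\sum_{k}\frac{(i\rho)^{k}}{k!}\mathbf{T}_{\mathfrak{I}_{N}}^{f}[x^{k}]$ are, for each fixed $x$ (resp. in the $L_{2}$ sense), power series in $\rho$ that agree for all $\rho\in\mathbb{C}$; since a convergent power series has unique coefficients, I conclude $\mathbf{T}_{\mathfrak{I}_{N}}^{f}[x^{k}] = \varphi_{f}^{(k)}(x)$ for all $k\in\mathbb{N}_{0}$, which is \eqref{transmproperty}.

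For the cosine and sine relations, I would run the same argument using \eqref{transmutedcosineandsinesol}. Writing $c_{\mathfrak{I}_{N}}^{f}(\rho,x) = \mathbf{T}_{\mathfrak{I}_{N},f}^{C}[\cos\rho x]$ and noting that $c_{\mathfrak{I}_{N}}^{f}(\rho,x) = u_{0}(\rho,x) = \sum_{k}\frac{(-1)^{k}\rho^{2k}}{(2k)!}\varphi_{f}^{(2k)}(x)$ (it solves \eqref{Schrwithdelta} with $c_{\mathfrak{I}_{N}}^{f}(\rho,0)=1$, $(c_{\mathfrak{I}_{N}}^{f})'(\rho,0)=h$, exactly the initial conditions \eqref{initialspps0} for $u_0$ when $f(0)=1$), while $\cos\rho x = \sum_{k}\frac{(-1)^{k}\rho^{2k}}{(2k)!}x^{2k}$, the same termwise-application and coefficient-matching in powers of $\rho^{2}$ gives \eqref{transmpropertycos}. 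Likewise $s_{\mathfrak{I}_{N}}(\rho,x) = u_{1}(\rho,x) = \sum_{k}\frac{(-1)^{k}\rho^{2k}}{(2k+1)!}\varphi_{f}^{(2k+1)}(x) = \mathbf{T}_{\mathfrak{I}_{N}}^{S}[\sin(\rho x)/\rho]$ and $\frac{\sin\rho x}{\rho} = \sum_{k}\frac{(-1)^{k}\rho^{2k}}{(2k+1)!}x^{2k+1}$ yield \eqref{transmpropertysin} after matching coefficients of $\rho^{2k}$.

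**Main obstacle.** The only delicate point is rigorously justifying the termwise application of the bounded operator $\mathbf{T}_{\mathfrak{I}_{N}}^{f}$ to the series for $e^{i\rho x}$ (resp. $\cos$, $\sin$) and then equating coefficients of two power series in $\rho$ whose coefficients are elements of $L_{2}(0,b)$. The convergence of the Taylor partial sums in $L_{2}(-b,b)$ is uniform on compact $\rho$-sets, so boundedness of $\mathbf{T}_{\mathfrak{I}_{N}}^{f}$ transfers it; and since the resulting $L_{2}(0,b)$-valued series $\sum_{k}\frac{(i\rho)^{k}}{k!}\mathbf{T}_{\mathfrak{I}_{N}}^{f}[x^{k}]$ and $\sum_{k}\frac{(i\rho)^{k}}{k!}\varphi_{f}^{(k)}$ both converge and coincide as $L_{2}(0,b)$ elements for every $\rho$, evaluating derivatives in $\rho$ at $\rho=0$ (or invoking uniqueness of power-series coefficients in a Banach space) forces equality of coefficients. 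Everything else is bookkeeping with the explicit initial conditions.
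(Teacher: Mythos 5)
Your proposal is correct and follows essentially the same route as the paper: identify $e_{\mathfrak{I}_{N}}^{h}(\rho,x)$ (with $h=f'(0)$) as $u_{0}+i\rho u_{1}$ via the SPPS initial conditions, regroup into $\sum_{k}\frac{(i\rho)^{k}}{k!}\varphi_{f}^{(k)}(x)$, apply $\mathbf{T}_{\mathfrak{I}_{N}}^{f}$ termwise to the exponential series, and match Taylor coefficients in $\rho$. The only cosmetic differences are that you justify the interchange via boundedness of $\mathbf{T}_{\mathfrak{I}_{N}}^{f}$ on $L_{2}$ rather than uniform convergence in $t$, and you rerun the argument for the cosine and sine operators where the paper deduces \eqref{transmpropertycos}--\eqref{transmpropertysin} directly from \eqref{transmproperty} and the parity of $G_{\mathfrak{I}_{N}}^{f}$ and $S_{\mathfrak{I}_{N}}$ in $t$; both variants are sound.
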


\begin{proof}
Consider the solution $e_{\mathfrak{I}_{N}}^{h}(\rho,x)$ with $h=f^{\prime
}(0)$. By the conditions (\ref{initialspps0}) and (\ref{initialspps1}),
solution $e_{\mathfrak{I}_{N}}^{h}(\rho,x)$ can be written in the form
\begin{align}
e_{\mathfrak{I}_{N}}^{h}(\rho,x)  &  = u_{0}(\rho,x)+i\rho u_{1}%
(\rho,x)\nonumber\\
& = \sum_{k=0}^{\infty}\frac{(-1)^{k}\rho^{2k}\varphi_{f}^{(2k)}(x)}%
{(2k)!}+\sum_{k=0}^{\infty}\frac{i(-1)^{k}\rho^{2k+1}\varphi_{f}^{(2k+1)}%
(x)}{(2k+1)!}\nonumber\\
&  = \sum_{k=0}^{\infty}\frac{(i\rho)^{2k}\varphi_{f}^{(2k)}(x)}{(2k)!}%
+\sum_{k=0}^{\infty}\frac{(i\rho)^{2k+1}\varphi_{f}^{(2k+1)}(x)}%
{(2k+1)!}\nonumber\\
& = \sum_{k=0}^{\infty}\frac{(i\rho)^{k}\varphi_{f}^{(k)}(x)}{k!}%
\label{auxseriesthmap}%
\end{align}
(The rearrangement of the series is due to absolute and uniform convergence,
Theorem \ref{theoremspps}). On the other hand
\[
e_{\mathfrak{I}_{N}}^{h}(\rho,x) = \mathbf{T}_{\mathfrak{I}_{N}}^{f}\left[
e^{i\rho x}\right]  = \mathbf{T}_{\mathfrak{I}_{N}}^{f}\left[  \sum
_{k=0}^{\infty}\frac{(i\rho)^{k}x^{k}}{k!} \right]
\]
Note that $\displaystyle \int_{-x}^{x}K_{\mathfrak{I}_{N}}^{f}(x,t)\left(
\sum_{k=0}^{\infty}\frac{(i\rho)^{k}t^{k}}{k!} \right) dt= \sum_{k=0}^{\infty
}\frac{(i\rho)^{k}}{k!}\int_{-x}^{x}K_{\mathfrak{I}_{N}}^{f}(x,t)t^{k}dt$, due
to the uniform convergence of the exponential series in the variable $t
\in[-x,x]$. Thus,
\begin{equation}
\label{auxiliarseriesthmap}e_{\mathfrak{I}_{N}}^{h}(\rho,x) = \sum
_{k=0}^{\infty}\frac{(i\rho)^{k}\mathbf{T}_{\mathfrak{I}_{N}}^{f}\left[
x^{k}\right] }{k!}.
\end{equation}
Comparing (\ref{auxiliarseriesthmap}) and (\ref{auxseriesthmap}) as Taylor
series in the complex variable $\rho$ we obtain (\ref{transmproperty}).
Relations (\ref{transmpropertycos}) and (\ref{transmpropertysin}) follows from
(\ref{transmproperty}), (\ref{cosineintegralkernelgral}),
(\ref{sineintegralkernelgral}) and the fact that $G_{\mathfrak{I}_{N}}%
^{f}(x,t)$ and $S_{\mathfrak{I}_{N}}(x,t)$ are even and odd in the variable
$t$, respectively.
\end{proof}

\begin{remark}
\label{remarkformalpowersasymp}  The formal powers $\{\varphi_{f}%
^{(k)}(x)\}_{k=0}^{\infty}$ satisfy the asymptotic relation\newline%
$\varphi_{f}^{(k)}(x)=x^{k}(1+o(1))$, $x\rightarrow0^{+}$, $\forall
k\in\mathbb{N}$.

Indeed, by Theorem \ref{theoretransprop} and the Cauchy-Bunyakovsky-Schwarz
inequality we have
\begin{align*}
|\varphi_{f}^{(k)}(x)-x^{k}|  &  = \left| \int_{-x}^{x}K_{\mathfrak{I}_{N}%
}^{f}(x,t)t^{k}dt\right|  \leqslant\left( \int_{-x}^{x}\left| K_{\mathfrak{I}%
_{N}}^{f}(x,t)\right| ^{2}dt\right) ^{\frac{1}{2}}\left( \int_{-x}^{x}%
|t|^{2k}dt\right) ^{\frac{1}{2}}\\
& \leqslant\sqrt{2b}\left\| K_{\mathfrak{I}_{N}^{f}}\right\| _{L_{\infty
}(\Omega)}\sqrt{\frac{2}{2k+1}}x^{k+\frac{1}{2}}%
\end{align*}
(because $K_{\mathfrak{I}_{N}}^{f}\in L_{\infty}(\Omega)$ by Theorem
\ref{thoremtransmoperator}). Hence
\[
\left| \frac{\varphi_{f}^{(k)}(x)}{x^{k}}-1\right| \leqslant\sqrt{2b}\left\|
K_{\mathfrak{I}_{N}^{f}}\right\| _{L_{\infty}(\Omega)}\sqrt{\frac{2}{2k+}%
}x^{\frac{1}{2}} \rightarrow0, \qquad x\rightarrow0^{+}.
\]

\end{remark}

\begin{remark}
\label{remarktransmoperatorlbais} Denote $\mathcal{P}(\mathbb{R}%
)=\mbox{Span}\{x^{k}\}_{k=0}^{\infty}$. According to Remark
\ref{remarkformalpowers} and Proposition \ref{propregular} we have that
$\mathbf{T}_{\mathfrak{I}_{N}}^{f}\left( \mathcal{P}(\mathbb{R})\right)
=\mbox{Span}\left\{ \varphi_{f}^{(k)}(x)\right\} _{k=0}^{\infty}$, and by
(\ref{Lbasisproperty}) we have the relation%

\begin{equation}
\label{transmrelationdense}\mathbf{L}_{q,\mathfrak{I}_{N}}\mathbf{T}%
_{\mathfrak{I}_{N}}^{f} p = -\mathbf{T}_{\mathfrak{I}_{N}}^{f}D^{2}p
\qquad\forall p \in\mathcal{P}(\mathbb{R}).
\end{equation}

According to \cite{hugo2}, $\mathbf{T}_{q,\mathfrak{I}_{N}}^{f}$ is a
transmutation operator for the pair $\mathbf{L}_{q,\mathfrak{I}_{N}}$,
$-D^{2}$ in the subspace $\mathcal{P}(\mathbb{R})$, and $\{\varphi_{f}%
^{(k)}(x)\}_{k=0}^{\infty}$ is an $\mathbf{L}_{q,\mathfrak{I}_{N}}$-basis.
Since $\varphi_{f}^{(K)}(0)=D\varphi_{f}^{(k)}(0)=0$ for $k\geqslant2$,
$\{\varphi_{f}^{(k)}(x)\}_{k=0}^{\infty}$ is called a \textbf{standard }
$\mathbf{L}_{q,\mathfrak{J}_{N}}$-basis, and $\mathbf{T}_{\mathfrak{I}_{N}%
}^{f}$ a standard transmutation operator. By Remark \ref{remarknonhomeq} we
can recover $\varphi_{f}^{(k)}$ for $k\geqslant2$ from $\varphi_{f}^{(0)}$ and
$\varphi_{f}^{(0)}$ by the formula
\begin{equation}
\label{recorverformalpowers}\varphi_{f}^{(k)} (x) =-k(k-1)\mathbf{R}%
_{\mathfrak{I}_{N}}^{f}\varphi_{f}^{(k)}(x) =k(k-1)f(x)\int_{0}^{x}\frac
{1}{f^{2}(t)}\int_{0}^{t}f(s)\varphi_{f}^{(k-2)}(s)ds
\end{equation}
(compare this formula with \cite[Formula (8), Remark 9]{hugo2}).
\end{remark}



The following result adapts Theorem 10 from \cite{hugo2}, proved for the case
of an $L_{1}$-regular potential.

\begin{theorem}
The operator $\mathbf{T}_{\mathfrak{I}_{N}}^{f}$ is a transmutation operator
for the pair $\mathbf{L}_{q, \mathfrak{I}_{N}}$, $-D^{2}$ in $H^{2}(-b,b)$,
that is, $\mathbf{T}_{\mathfrak{I}_{N}}^{f}\left( H^{2}(-b,b)\right)
\subset\mathcal{D}_{2}\left( \mathbf{L}_{q,\mathfrak{I}_{N}}\right) $ and
\begin{equation}
\label{transmpropertygeneral}\mathbf{L}_{q, \mathfrak{I}_{N}} \mathbf{T}%
_{\mathfrak{I}_{N}}u= -\mathbf{T}_{\mathfrak{I}_{N}}D^{2}u \qquad\forall u\in
H^{2}(-b,b)
\end{equation}

\end{theorem}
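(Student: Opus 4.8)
The plan is to extend the identity $\mathbf{L}_{q,\mathfrak{I}_{N}}\mathbf{T}_{\mathfrak{I}_{N}}^{f}p=-\mathbf{T}_{\mathfrak{I}_{N}}^{f}D^{2}p$, which by (\ref{transmrelationdense}) is already available on the dense subspace $\mathcal{P}(\mathbb{R})$, to all of $H^{2}(-b,b)$ by a limiting argument in which the unbounded operator $\mathbf{L}_{q,\mathfrak{I}_{N}}$ is traded for the bounded right inverse $\mathbf{R}_{\mathfrak{I}_{N}}^{f}$ from Remark \ref{remarknonhomeq}. Besides results already proved, the argument uses only that polynomials are dense in $H^{2}(-b,b)$ (approximate $u''$ by polynomials in $L_{2}$ and integrate twice), the embedding $H^{2}(-b,b)\hookrightarrow C^{1}[-b,b]$, the boundedness of $\mathbf{T}_{\mathfrak{I}_{N}}^{f}$ on $L_{2}$ (Theorem \ref{thoremtransmoperator}), and the fact that $\mathcal{D}_{2}(\mathbf{L}_{q,\mathfrak{I}_{N}})$ is a linear subspace of $L_{2,loc}(0,b)$.

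The first step is to obtain a closed expression for $\mathbf{T}_{\mathfrak{I}_{N}}^{f}p$ when $p\in\mathcal{P}(\mathbb{R})$. Using $\mathbf{T}_{\mathfrak{I}_{N}}^{f}[x^{k}]=\varphi_{f}^{(k)}$ together with $\varphi_{f}^{(0)}=f$, $\varphi_{f}^{(k)}(0)=0$ for $k\geqslant1$, and relation (\ref{formalpowersderivative}) together with $\psi_{f}^{(0)}=1/f$ and $\psi_{f}^{(k)}(0)=0$ for $k\geqslant1$, one finds that $\mathbf{T}_{\mathfrak{I}_{N}}^{f}p(0)=p(0)$ and $(\mathbf{T}_{\mathfrak{I}_{N}}^{f}p)'(0)=p'(0)+h\,p(0)$ (consistent with the Goursat conditions of Proposition \ref{propGoursat}). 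Since $\mathbf{T}_{\mathfrak{I}_{N}}^{f}p\in\mathcal{D}_{2}(\mathbf{L}_{q,\mathfrak{I}_{N}})$ by Remark \ref{remarktransmoperatorlbais}, it solves the Cauchy problem $\mathbf{L}_{q,\mathfrak{I}_{N}}w=-\mathbf{T}_{\mathfrak{I}_{N}}^{f}D^{2}p$, $w(0)=p(0)$, $w'(0)=p'(0)+h\,p(0)$; uniqueness (Remark \ref{remarkunicityofsol}) and the solution formula (\ref{solutionnonhomogeneouseq}), evaluated with $f(0)=1$ and $h=f'(0)$, then give
\begin{equation}
\mathbf{T}_{\mathfrak{I}_{N}}^{f}p=\mathbf{R}_{\mathfrak{I}_{N}}^{f}\bigl(-\mathbf{T}_{\mathfrak{I}_{N}}^{f}D^{2}p\bigr)+p(0)\,f+p'(0)\,f\int_{0}^{x}\frac{dt}{f^{2}(t)}\qquad\forall\,p\in\mathcal{P}(\mathbb{R}).\label{plantransmpoly}
\end{equation}

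For the second step, fix $u\in H^{2}(-b,b)$ and pick polynomials $p_{n}\to u$ in $H^{2}(-b,b)$. Then $p_{n}(0)\to u(0)$ and $p_{n}'(0)\to u'(0)$ by the $C^{1}$-embedding, and $D^{2}p_{n}\to D^{2}u$ in $L_{2}(-b,b)$; by boundedness of $\mathbf{T}_{\mathfrak{I}_{N}}^{f}$ this gives $\mathbf{T}_{\mathfrak{I}_{N}}^{f}p_{n}\to\mathbf{T}_{\mathfrak{I}_{N}}^{f}u$ and $\mathbf{T}_{\mathfrak{I}_{N}}^{f}D^{2}p_{n}\to\mathbf{T}_{\mathfrak{I}_{N}}^{f}D^{2}u$ in $L_{2}(0,b)$, and composing with the bounded operator $\mathbf{R}_{\mathfrak{I}_{N}}^{f}$ and letting $n\to\infty$ in (\ref{plantransmpoly}) yields
\begin{equation}
\mathbf{T}_{\mathfrak{I}_{N}}^{f}u=\mathbf{R}_{\mathfrak{I}_{N}}^{f}\bigl(-\mathbf{T}_{\mathfrak{I}_{N}}^{f}D^{2}u\bigr)+u(0)\,f+u'(0)\,f\int_{0}^{x}\frac{dt}{f^{2}(t)}.\label{plantransmgeneral}
\end{equation}
The right-hand side of (\ref{plantransmgeneral}) lies in $\mathcal{D}_{2}(\mathbf{L}_{q,\mathfrak{I}_{N}})$, because $\mathbf{R}_{\mathfrak{I}_{N}}^{f}$ maps $L_{2}(0,b)$ into $\mathcal{D}_{2}(\mathbf{L}_{q,\mathfrak{I}_{N}})$ and both $f$ and $f\int_{0}^{x}dt/f^{2}(t)$ are solutions of $\mathbf{L}_{q,\mathfrak{I}_{N}}u=0$ belonging to $\mathcal{D}_{2}(\mathbf{L}_{q,\mathfrak{I}_{N}})$; hence $\mathbf{T}_{\mathfrak{I}_{N}}^{f}u\in\mathcal{D}_{2}(\mathbf{L}_{q,\mathfrak{I}_{N}})$. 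Finally, applying $\mathbf{L}_{q,\mathfrak{I}_{N}}$ to (\ref{plantransmgeneral}), using that $\mathbf{R}_{\mathfrak{I}_{N}}^{f}$ is a right inverse and that the last two terms are annihilated by $\mathbf{L}_{q,\mathfrak{I}_{N}}$, we obtain (\ref{transmpropertygeneral}).

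The step I expect to be the crux is (\ref{plantransmpoly}): this is where the argument actually overcomes the difficulty that $\mathbf{T}_{\mathfrak{I}_{N}}^{f}$ is known to be bounded only on $L_{2}$, not on $H^{2}$, so that a direct extension by continuity of the relation $\mathbf{L}_{q,\mathfrak{I}_{N}}\mathbf{T}_{\mathfrak{I}_{N}}^{f}=-\mathbf{T}_{\mathfrak{I}_{N}}^{f}D^{2}$ is not available and $\mathbf{L}_{q,\mathfrak{I}_{N}}$ cannot be passed through an $L_{2}$-limit. Rewriting $\mathbf{T}_{\mathfrak{I}_{N}}^{f}p$ in terms of the bounded operator $\mathbf{R}_{\mathfrak{I}_{N}}^{f}$ and the finite-dimensional boundary part $p(0)f+p'(0)f\int_{0}^{x}dt/f^{2}(t)$ makes every term manifestly continuous under the limit. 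The only computation involved there is identifying the two scalar coefficients produced by (\ref{solutionnonhomogeneouseq}) as $p(0)$ and $p'(0)$, which is immediate once $\mathbf{T}_{\mathfrak{I}_{N}}^{f}p$ and $(\mathbf{T}_{\mathfrak{I}_{N}}^{f}p)'$ have been evaluated at $x=0$.
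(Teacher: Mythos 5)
Your proposal is correct and follows essentially the same route as the paper: both arguments reduce the theorem to the identity $\mathbf{T}_{\mathfrak{I}_{N}}^{f}u=u(0)\varphi_{f}^{(0)}+u^{\prime}(0)\varphi_{f}^{(1)}-\mathbf{R}_{\mathfrak{I}_{N}}^{f}\mathbf{T}_{\mathfrak{I}_{N}}^{f}u^{\prime\prime}$ (your (\ref{plantransmgeneral}) is exactly this, since $\varphi_{f}^{(0)}=f$ and $\varphi_{f}^{(1)}=f\int_{0}^{x}dt/f^{2}(t)$), establish it first on polynomials, and then pass to the limit using the density of $\mathcal{P}(\mathbb{R})$ in $H^{2}(-b,b)$ and the $L_{2}$-boundedness of $\mathbf{R}_{\mathfrak{I}_{N}}^{f}\mathbf{T}_{\mathfrak{I}_{N}}^{f}$. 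The only (immaterial) difference is that the paper obtains the polynomial case termwise from the recovery formula $\varphi_{f}^{(k)}=-k(k-1)\mathbf{R}_{\mathfrak{I}_{N}}^{f}\varphi_{f}^{(k-2)}$, whereas you obtain it from uniqueness of the Cauchy problem and formula (\ref{solutionnonhomogeneouseq}) --- which is precisely where that recovery formula comes from.
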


\begin{proof}
We show that
\begin{equation}
\label{auxiliareqtransmpropgen}\mathbf{T}_{\mathfrak{I}_{N}}u(x) =
u(0)\varphi_{f}^{(0)}(x)+u^{\prime}(0)\varphi_{f}^{(1)}(x)-\mathbf{R}%
_{\mathfrak{I}_{N}}^{f}\mathbf{T}_{\mathfrak{I}_{N}}^{f}u^{\prime\prime
2}(-b,b).
\end{equation}
Let us first see that (\ref{auxiliareqtransmpropgen}) is valid for
$p\in\mathcal{P}(\mathbb{R})$. Indeed, set $p(x)=\sum_{k=0}^{M}c_{k} x^{k}$.
By the linearity of $\mathbf{T}_{\mathfrak{I}_{N}}^{f}$, Theorem
\ref{theoretransprop} and (\ref{recorverformalpowers}) we have
\begin{align*}
\mathbf{T}_{\mathfrak{I}_{N}}^{f}p(x)  &  = c_{0}\varphi_{f}^{(0)}%
+c_{1}\varphi_{f}^{(1)}(x)+\sum_{k=2}^{M}c_{k}\varphi_{f}^{(k)}(x)\\
&  = p(0)\varphi_{f}^{(0)}+p^{\prime}(0)\varphi_{f}^{(1)}(x)-\sum_{k=2}%
^{M}c_{k}k(k-1)\mathbf{R}_{\mathfrak{I}_{N}}^{f}\varphi_{f}^{(k-2)}(x)\\
&  = p(0)\varphi_{f}^{(0)}+p^{\prime}(0)\varphi_{f}^{(1)}(x)-\sum_{k=2}%
^{M}c_{k}k(k-1)\mathbf{R}_{\mathfrak{I}_{N}}^{f}\mathbf{T}_{\mathfrak{I}_{N}%
}^{f} \left[ x^{k-2}\right] \\
&  = p(0)\varphi_{f}^{(0)}+p^{\prime}(0)\varphi_{f}^{(1)}(x)-\mathbf{R}%
_{\mathfrak{I}_{N}}^{f}\mathbf{T}_{\mathfrak{I}_{N}}^{f}p^{\prime\prime}(x)
\end{align*}
This establishes (\ref{auxiliareqtransmpropgen}) for $p\in\mathcal{P}%
(\mathbb{R})$. Take $u\in H^{2}(-b,b)$ arbitrary. There exists a sequence
$\{p_{n}\}\subset\mathcal{P}(\mathbb{R})$ such that $p_{n}^{(j)}%
\overset{[-b,b]}{\rightrightarrows} u^{(j)}$, $j=0,1$, and $p^{\prime\prime
}_{n}\rightarrow u$ in $L_{2}(-b,b)$, when $n\rightarrow\infty$ (see
\cite[Prop. 4]{hugo2}). Since $\mathbf{R}_{\mathfrak{I}_{N}}^{f}%
\mathbf{T}_{\mathfrak{I}_{N}}^{f}\in\mathcal{B}\left( L_{2}(-b,b),
L_{2}(0,b)\right) $ we have
\begin{align*}
\mathbf{T}_{\mathfrak{I}_{N}}^{f}u(x)  &  = \lim_{n\rightarrow\infty}
\mathbf{T}_{\mathfrak{I}_{N}}^{f}p_{n}(x) = \lim_{n\rightarrow\infty}\left[
p_{n}(0)\varphi_{f}^{(0)}+p^{\prime}_{n}(0)\varphi_{f}^{(1)}(x)-\mathbf{R}%
_{\mathfrak{I}_{N}}^{f}\mathbf{T}_{\mathfrak{I}_{N}}^{f}p^{\prime\prime}%
_{n}(x) \right] \\
&  = u(0)\varphi_{f}^{(0)}(x)+u^{\prime}(0)\varphi_{f}^{(1)}(x)-\mathbf{R}%
_{\mathfrak{I}_{N}}^{f}\mathbf{T}_{\mathfrak{I}_{N}}^{f}u^{\prime\prime}(x)
\end{align*}
and we obtain (\ref{auxiliareqtransmpropgen}). Hence, by Remark
\ref{remarknonhomeq}, $\mathbf{T}_{\mathfrak{I}_{N}}^{f}\left( H^{2}%
(-b,b)\right) \subset\mathcal{D}_{2}\left( \mathbf{L}_{q,\mathfrak{I}_{N}%
}\right) $, and since $\mathbf{L}_{q,\mathfrak{I}_{N}}\varphi_{f}^{(k)}=0$ for
$k=0,1$, applying $\mathbf{L}_{q,\mathfrak{I}_{N}}$ in both sides of
(\ref{auxiliareqtransmpropgen}) we have (\ref{transmpropertygeneral}).
\end{proof}

\section{Fourier-Legendre and Neumann series of Bessel functions expansions}

\subsection{Fourier-Legendre series expansion of the transmutation kernel}

Fix $x\in(0,b]$. Theorem \ref{thoremtransmoperator} establishes that
$K_{\mathfrak{I}_{N}}^{h}(x,\cdot)\in L_{2}(-x,x)$, then $K_{\mathfrak{I}_{N}%
}^{h}(x,t)$ admits a Fourier series in terms of an orthogonal basis of
$L_{2}(-x,x)$. Following \cite{neumann}, we choose the orthogonal basis of
$L_{2}(-1,1)$ given by the Legendre polynomials $\{P_{n}(z)\}_{n=0}^{\infty}$.
Thus,
\begin{equation}
\label{FourierLegendreSeries1}K_{\mathfrak{I}_{N}}^{h}(x,t)= \sum
_{n=0}^{\infty}\frac{a_{n}(x)}{x}P_{n}\left( \frac{t}{x}\right)
\end{equation}
where
\begin{equation}
\label{FourierLegendreCoeff1}a_{n}(x)=\left( n+\frac{1}{2}\right) \int
_{-x}^{x}K_{\mathfrak{I}_{N}}^{h}(x,t)P_{n}\left( \frac{t}{x}\right)
dt\qquad\forall n\in\mathbb{N}_{0}.
\end{equation}
The series (\ref{FourierLegendreSeries1}) converges with respect to $t$ in the
norm of $L_{2}(-x,x)$. Formula (\ref{FourierLegendreCoeff1}) is obtained
multiplying (\ref{FourierLegendreSeries1}) by $P_{n}\left( \frac{t}{x}\right)
$, using the general Parseval's identity \cite[pp. 16]{akhiezer} and taking
into account that $\|P_{n}\|_{L_{2}(-1,1)}^{2}=\frac{2}{2n+1}$, $n\in
\mathbb{N}_{0}$.

\begin{example}
Consider the kernel $K_{\mathfrak{I}_{1}}^{0}(x,t)=\frac{\alpha_{1}}%
{2}H(x-x_{1})\chi_{[2x_{1}-x,x]}$ from Example \ref{beginexample1}. In this
case, the Fourier-Legendre coefficients has the form
\[
a_{n}(x) = \frac{\alpha_{1}}{2}\left( n+\frac{1}{2}\right)  H(x-x_{1}%
)\int_{2x_{1}-x}^{x}P_{n}(t)dt=\frac{\alpha_{1}}{2}\left( n+\frac{1}%
{2}\right)  xH(x-x_{1})\int_{2\frac{x_{1}}{x}-1}^{1}P_{n}(t)dt.
\]
From this we obtain $a_{0}(x)=\frac{\alpha_{1}}{2}H(x-x_{1})(x-x_{1})$. Using
formula $P_{n}(t)=\frac{1}{2n+1}\frac{d}{dt}\left( P_{n+1}(t)-P_{n-1}%
(t)\right) $ for $n\in\mathbb{N}$, and that $P_{n}(1)=0$ for all
$n\in\mathbb{N}$, we have
\[
a_{n}(x)= \frac{\alpha_{1}}{4}xH(x-x_{1})\left[ P_{n-1}\left( \frac{2x_{1}}%
{x}-1\right) -P_{n+1}\left( \frac{2x_{1}}{x}-1\right) \right]
\]

\end{example}

\begin{remark}
From (\ref{FourierLegendreCoeff1}) we obtain that the first coefficient
$a_{0}(x)$ is given by
\begin{align*}
a_{0}(x)  &  = \frac{1}{2}\int_{-x}^{x}K_{\mathfrak{I}_{N}}^{h}(x,t)P_{0}%
\left( \frac{t}{x}\right) dt =\frac{1}{2}\int_{-x}^{x}K_{\mathfrak{I}_{N}}%
^{h}(x,t)dt\\
& = \frac{1}{2}\mathbf{T}_{\mathfrak{I}_{N}}^{h}[1]-\frac{1}{2} = \frac{1}%
{2}(e_{\mathfrak{I}_{N}}^{h}(0,x)-1).
\end{align*}
Thus, we obtain the relations
\begin{equation}
\label{relationfirstcoeff1}a_{0}(x)= \frac{1}{2}(e_{\mathfrak{I}_{N}}%
^{h}(0,x)-1), \qquad e_{\mathfrak{I}_{N}}^{h}(0,x)= 2a_{n}(x)+1.
\end{equation}

\end{remark}

For the kernels $G_{\mathfrak{I}_{N}}^{h}(x,t)$ and $S_{\mathfrak{I}_{N}%
}(x,t)$ we obtain the series representations in terms of the even and odd
Legendre polynomials, respectively,%

\begin{align}
G_{\mathfrak{I}_{N}}^{h}(x,t)  &  =  \sum_{n=0}^{\infty}\frac{g_{n}(x)}%
{x}P_{2n}\left( \frac{t}{x}\right) ,\label{Fourierexpansioncosine}\\
S_{\mathfrak{I}_{N}}(x,t)  &  =  \sum_{n=0}^{\infty}\frac{s_{n}(x)}{x}%
P_{2n+1}\left( \frac{t}{x}\right) ,\label{Fourierexpansionsine}%
\end{align}
where the coefficients are given by
\begin{align}
g_{n}(x)  &  =  2a_{2n}(x)= (4n+1)\int_{0}^{x}G_{\mathfrak{I}_{N}}%
^{h}(x,t)P_{2n}\left( \frac{t}{x}\right)
dt,\label{Fourierexpansioncosinecoeff}\\
s_{n}(x)  &  =  2a_{2n+1}(4n+3)\int_{0}^{x}S_{\mathfrak{I}_{N}}(x,t)P_{2n+1}%
\left( \frac{t}{x}\right) dt.\label{Fourierexpansionsinecoeff}%
\end{align}
The proof of these facts is analogous to that in the case of Eq.
(\ref{schrodingerregular}), see \cite{neumann} or \cite[Ch. 9]%
{directandinverse}.

\begin{remark}
\label{remarkfirstcoeffcosine}  Since $g_{0}(x)=2a_{0}(x)$, then
$g_{0}(x)=e_{\mathfrak{I}_{N}}^{h}(0,x)-1$. Since $e_{\mathfrak{I}_{N}}%
^{h}(0,x)$ is the solution of (\ref{Schrwithdelta}) with $\rho=0$ satisfying
$e_{\mathfrak{I}_{N}}^{h}(0,0)=1$, $(e_{\mathfrak{I}_{N}}^{h})^{\prime
}(0,0)=h$, hence by Remark \ref{remarkunicityofsol}, $e_{\mathfrak{I}_{N}}%
^{h}(0,x)=c_{\mathfrak{I}_{N}}^{h}(0,x)$ and
\begin{equation}
\label{relationfirstcoeff}g_{0}(x)= c_{\mathfrak{I}_{N}}^{h}(0,x)-1.
\end{equation}
On the other hand, for the coefficient $s_{0}(x)$ we have the relation
\[
s_{0}(x)= 3\int_{0}^{x}H_{\mathfrak{I}_{N}}(x,t)P_{1}\left( \frac{t}{x}\right)
dt= \frac{3}{x}\int_{0}^{x}H_{\mathfrak{I}_{N}}(x,t)tdt.
\]
Since $\frac{\sin(\rho x)}{\rho}\big{|}_{x=0}=x$, from (\ref{transmsinegral})
we have
\begin{equation}
\label{relationcoeffs0}s_{0}(x)= 3\left( \frac{s_{\mathfrak{I}_{N}}(0,x)}%
{x}-1\right) .
\end{equation}

\end{remark}

For every $n\in\mathbb{N}_{0}$ we write the Legendre polynomial $P_{n}(z)$ in
the form $P_{n}(z)=\sum_{k=0}^{n}l_{k,n}z^{k}$. Note that if $n$ is even,
$l_{k,n}=0$ for odd $k$, and $P_{2n}(z)=\sum_{k=0}^{n}\tilde{l}_{k,n}z^{2k}$
with $\tilde{l}_{k,n}=l_{2k,2n}$. Similarly $P_{2n+1}(z)=\sum_{k=0}^{n}\hat
{l}_{k,n}z^{2k+1}$ with $\hat{l}_{k,n}=l_{2k+1,2n+1}$. With this notation we
write an explicit formula for the coefficients (\ref{FourierLegendreCoeff1})
of the canonical transmutation kernel $K_{\mathfrak{J}_{N}}^{f}(x,t)$.

\begin{proposition}
\label{propcoeffwithformalpowers}  The coefficients $\{a_{n}(x)\}_{n=0}%
^{\infty}$ of the Fourier-Legendre expansion (\ref{FourierLegendreSeries1}) of
the canonical transmutation kernel $K_{\mathfrak{I}_{N}}^{f}(x,t)$ are given
by
\begin{equation}
\label{CoeffFourier1formalpowers}a_{n}(x)= \left( n+\frac{1}{2}\right) \left(
\sum_{k=0}^{n}l_{k,n}\frac{\varphi_{f}^{(k)}(x)}{x^{k}}-1\right) .
\end{equation}
The coefficients of the canonical cosine and sine kernels satisfy the
following relations for all $n\in\mathbb{N}_{0}$
\begin{align}
g_{n}(x)  & = (4n+1)\left( \sum_{k=0}^{n}\tilde{l}_{k,n}\frac{\varphi
_{f}^{(2k)}(x)}{x^{2k}}-1\right) ,\label{CoeffFourierCosineformalpowers}\\
s_{n}(x)  & = (4n+3)\left( \sum_{k=0}^{n}\hat{l}_{k,n}\frac{\varphi
_{f}^{(2k+1)}(x)}{x^{2k+1}}-1\right) ,\label{CoeffFourierSineformalpowers}%
\end{align}

\end{proposition}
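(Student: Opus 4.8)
The plan is to substitute the explicit monomial expansion of the Legendre polynomials into the defining integral \eqref{FourierLegendreCoeff1} and then recognize the resulting moments of the kernel as images of monomials under the canonical transmutation operator, using the mapping property of Theorem~\ref{theoretransprop}.

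First I would fix $x\in(0,b]$ and write $P_{n}(t/x)=\sum_{k=0}^{n}l_{k,n}t^{k}/x^{k}$. Inserting this into \eqref{FourierLegendreCoeff1} and interchanging the (finite) sum with the integral gives
\[
a_{n}(x)=\left(n+\tfrac12\right)\sum_{k=0}^{n}\frac{l_{k,n}}{x^{k}}\int_{-x}^{x}K_{\mathfrak{I}_{N}}^{f}(x,t)\,t^{k}\,dt .
\]
Next I would invoke the definition \eqref{transmutationoperator1} of $\mathbf{T}_{\mathfrak{I}_{N}}^{f}$ together with \eqref{transmproperty}: since $\mathbf{T}_{\mathfrak{I}_{N}}^{f}[x^{k}]=x^{k}+\int_{-x}^{x}K_{\mathfrak{I}_{N}}^{f}(x,t)t^{k}\,dt=\varphi_{f}^{(k)}(x)$, each moment equals $\varphi_{f}^{(k)}(x)-x^{k}$. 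Substituting,
\[
a_{n}(x)=\left(n+\tfrac12\right)\left(\sum_{k=0}^{n}l_{k,n}\frac{\varphi_{f}^{(k)}(x)}{x^{k}}-\sum_{k=0}^{n}l_{k,n}\right),
\]
and since the last sum equals $P_{n}(1)=1$, this is precisely \eqref{CoeffFourier1formalpowers}.

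For the cosine and sine coefficients I would simply specialize. Using $g_{n}(x)=2a_{2n}(x)$ from \eqref{Fourierexpansioncosinecoeff} and the fact that $l_{k,2n}=0$ for odd $k$, so that $l_{2k,2n}=\tilde{l}_{k,n}$ and $\sum_{k}\tilde{l}_{k,n}=P_{2n}(1)=1$, formula \eqref{CoeffFourier1formalpowers} collapses to \eqref{CoeffFourierCosineformalpowers}; likewise $s_{n}(x)=2a_{2n+1}(x)$ from \eqref{Fourierexpansionsinecoeff}, together with $l_{2k+1,2n+1}=\hat{l}_{k,n}$ and $P_{2n+1}(1)=1$, gives \eqref{CoeffFourierSineformalpowers}. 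Alternatively, one could run the same computation directly with the operators $\mathbf{T}_{\mathfrak{I}_{N},f}^{C}$, $\mathbf{T}_{\mathfrak{I}_{N}}^{S}$ and the relations \eqref{transmpropertycos}--\eqref{transmpropertysin}, exploiting that $G_{\mathfrak{I}_{N}}^{f}(x,\cdot)$ and $S_{\mathfrak{I}_{N}}(x,\cdot)$ are even and odd in $t$, respectively.

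There is essentially no serious obstacle here: once the mapping property \eqref{transmproperty} is available, the argument is a short elementary computation. The only points that require (minor) care are the normalization $P_{n}(1)=1$ of the Legendre polynomials, which is what makes the constant terms cancel down to $1$ rather than to $\sum_{k}l_{k,n}$; the harmless interchange of a finite sum with the integral; and the observation that $x>0$, so that the divisions by $x^{k}$ are legitimate throughout.
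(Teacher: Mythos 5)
Your proposal is correct and follows essentially the same route as the paper: expand $P_{n}(t/x)$ into monomials, identify the kernel moments with $\mathbf{T}_{\mathfrak{I}_{N}}^{f}[x^{k}]-x^{k}=\varphi_{f}^{(k)}(x)-x^{k}$ via Theorem~\ref{theoretransprop}, and use $P_{n}(1)=1$ together with the parity relations $l_{2k+1,2n}=l_{2k,2n+1}=0$ to deduce the cosine and sine cases. You even make explicit the normalization $P_{n}(1)=1$, which the paper invokes with a small typo.
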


\begin{proof}
From (\ref{FourierLegendreCoeff1}) we have
\begin{align*}
a_{n}(x)  &  = \left( n+\frac{1}{2}\right) \int_{-x}^{x}K_{\mathfrak{I}_{N}%
}^{f}(x,t)\left( \sum_{k=0}^{n}l_{k,n}\left( \frac{t}{x}\right) ^{k}\right)
dt\\
&  = \left( n+\frac{1}{2}\right) \sum_{k=0}^{n}\frac{l_{k,n}}{x^{k}}\int
_{0}^{x}K_{\mathfrak{I}_{N}}^{f}(x,t)t^{k}dt\\
& = \left( n+\frac{1}{2}\right) \sum_{k=0}^{n}\frac{l_{k,n}}{x^{k}}\left(
\mathbf{T}_{\mathfrak{I}_{N}}^{f}\left[ x^{k}\right] -x^{k} \right) .
\end{align*}
Hence (\ref{CoeffFourier1formalpowers}) follows from Theorem
\ref{theoretransprop} and that $P_{n}(z)=1$. Since $g_{n}(x)=2a_{2n}(x)$,
$s_{n}(x)=2a_{2n+1}(x)$, $l_{2k+1,2n}=0$, $l_{2k,2n+1}=0$ and $l_{2k,2n}%
=\tilde{l}_{k,n}$,$l_{2k+1,2n+1}=\hat{l}_{k,n}$, we obtain
(\ref{CoeffFourierCosineformalpowers}) and (\ref{CoeffFourierSineformalpowers}).
\end{proof}

\begin{remark}
\label{remarkcoefficientsaregood}  By Remark \ref{remarkformalpowersasymp},
formula (\ref{CoeffFourier1formalpowers}) is well defined at $x=0$. Note that
$x^{n}a_{n}(x)$ belongs to $\mathcal{D}_{2}\left( \mathbf{L}_{q,\mathfrak{I}%
_{N}}\right) $ for all $n\in\mathbb{N}_{0}$.
\end{remark}

\subsection{Representation of the solutions as Neumann series of Bessel
functions}

Similarly to the case of the regular Eq. (\ref{regularSch}) \cite{neumann}, we
obtain a representation for the solutions in terms of Neumann series of Bessel
functions (NSBF). For $M\in\mathbb{N}$ we define
\[
K_{\mathfrak{I}_{N},M}^{h}(x,t):= \sum_{n=0}^{M}\frac{a_{n}(x)}{x}P_{n}\left(
\frac{t}{x}\right) ,
\]
that is, the $M$-partial sum of (\ref{FourierLegendreSeries1}).

\begin{theorem}
\label{ThNSBF1}  The solutions $c_{\mathfrak{I}_{N}}^{h}(\rho,x)$ and
$s_{\mathfrak{I}_{N}}(\rho,x)$ admit the following NSBF representations
\begin{align}
c_{\mathfrak{I}_{N}}^{h}(\rho,x)= \cos(\rho x)+\sum_{n=0}^{\infty}%
(-1)^{n}g_{n}(x)j_{2n}(\rho x),\label{NSBFcosine}\\
s_{\mathfrak{I}_{N}}(\rho,x)= \frac{\sin(\rho x)}{\rho}+\frac{1}{\rho}%
\sum_{n=0}^{\infty}(-1)^{n}s_{n}(x)j_{2n+1}(\rho x),\label{NSBFsine}%
\end{align}
where $j_{\nu}$ stands for the spherical Bessel function $j_{\nu}%
(z)=\sqrt{\frac{\pi}{2z}}J_{\nu+\frac{1}{2}}(z)$ (and $J_{\nu}$ stands for the
Bessel function of order $\nu$). The series converge pointwise with respect to
$x$ in $(0,b]$ and uniformly with respect to $\rho$ on any compact subset of
the complex $\rho$-plane. Moreover, for $M\in\mathbb{N}$ the functions
\begin{align}
c_{\mathfrak{I}_{N},M}^{h}(\rho,x)= \cos(\rho x)+\sum_{n=0}^{M}(-1)^{n}%
g_{n}(x)j_{2n}(\rho x),\label{NSBFcosineaprox}\\
s_{\mathfrak{I}_{N},M}(\rho,x)= \frac{\sin(\rho x)}{\rho}+\frac{1}{\rho}%
\sum_{n=0}^{M}(-1)^{n}s_{n}(x)j_{2n+1}(\rho x),\label{NSBFsineaprox}%
\end{align}
obey the estimates
\begin{align}
|c_{\mathfrak{I}_{N}}^{h}(\rho,x)-c_{\mathfrak{I}_{N},M}^{h}(\rho,x)|  &
\leqslant 2\epsilon_{2M}(x)\sqrt{\frac{\sinh(2bC)}{C}}%
,\label{estimatessinecosineNSBF}\\
|\rho s_{\mathfrak{I}_{N}}(\rho,x)-\rho s_{\mathfrak{I}_{N},M}(\rho,x)|  &
\leqslant 2\epsilon_{2M+1}(x)\sqrt{\frac{\sinh(2bC)}{C}}%
,\label{estimatessinesineNSBF}%
\end{align}
for any $\rho\in\mathbb{C}$ belonging to the strip $|\operatorname{Im}%
\rho|\leqslant C$, $C>0$, and where\newline$\epsilon_{M}(x)=\|K_{\mathfrak{I}%
_{N}}^{h}(x,\cdot)-K_{\mathfrak{I}_{N},2M}^{h}(x,\cdot)\|_{L_{2}(-x,x)}$.
\end{theorem}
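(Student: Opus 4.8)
The strategy is to reproduce, step by step, the argument used for the regular equation (\ref{regularSch}) in \cite{neumann}; the new ingredients are the Fourier--Legendre expansions (\ref{Fourierexpansioncosine})--(\ref{Fourierexpansionsine}) of the cosine and sine transmutation kernels, together with the identities (\ref{cosineintegralkernelgral})--(\ref{sineintegralkernelgral}) relating those kernels to $K_{\mathfrak{I}_{N}}^{h}$ irrespective of the point interactions. The starting point is the classical formula $\int_{-1}^{1}P_{n}(u)e^{izu}\,du=2i^{n}j_{n}(z)$, $z\in\mathbb{C}$; taking its real and imaginary parts (for even, resp.\ odd, $n$) and substituting $u=t/x$, $z=\rho x$, one obtains
\[
\int_{0}^{x}\frac{1}{x}P_{2n}\!\left(\frac{t}{x}\right)\cos(\rho t)\,dt=(-1)^{n}j_{2n}(\rho x),\qquad\int_{0}^{x}\frac{1}{x}P_{2n+1}\!\left(\frac{t}{x}\right)\frac{\sin(\rho t)}{\rho}\,dt=\frac{(-1)^{n}}{\rho}\,j_{2n+1}(\rho x).
\]
Since they are finite sums, applying these identities term by term to the partial sums $G_{\mathfrak{I}_{N},M}^{h}(x,t):=\sum_{n=0}^{M}\frac{g_{n}(x)}{x}P_{2n}(t/x)$ and $S_{\mathfrak{I}_{N},M}(x,t):=\sum_{n=0}^{M}\frac{s_{n}(x)}{x}P_{2n+1}(t/x)$ of (\ref{Fourierexpansioncosine})--(\ref{Fourierexpansionsine}) shows at once that the approximants (\ref{NSBFcosineaprox})--(\ref{NSBFsineaprox}) satisfy the truncated representations $c_{\mathfrak{I}_{N},M}^{h}(\rho,x)=\cos(\rho x)+\int_{0}^{x}G_{\mathfrak{I}_{N},M}^{h}(x,t)\cos(\rho t)\,dt$ and $\rho\,s_{\mathfrak{I}_{N},M}(\rho,x)=\sin(\rho x)+\int_{0}^{x}S_{\mathfrak{I}_{N},M}(x,t)\sin(\rho t)\,dt$.

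I would then subtract these from the exact representations (\ref{transmcosinegral}) and (\ref{transmsinegral}) (the latter multiplied by $\rho$), obtaining $c_{\mathfrak{I}_{N}}^{h}(\rho,x)-c_{\mathfrak{I}_{N},M}^{h}(\rho,x)=\int_{0}^{x}\bigl(G_{\mathfrak{I}_{N}}^{h}-G_{\mathfrak{I}_{N},M}^{h}\bigr)(x,t)\cos(\rho t)\,dt$ and the analogue with $S$ and $\sin$; the Cauchy--Bunyakovsky--Schwarz inequality then gives
\[
\bigl|c_{\mathfrak{I}_{N}}^{h}(\rho,x)-c_{\mathfrak{I}_{N},M}^{h}(\rho,x)\bigr|\le\bigl\|G_{\mathfrak{I}_{N}}^{h}(x,\cdot)-G_{\mathfrak{I}_{N},M}^{h}(x,\cdot)\bigr\|_{L_{2}(0,x)}\bigl\|\cos(\rho\,\cdot)\bigr\|_{L_{2}(0,x)}
\]
and likewise $\bigl|\rho s_{\mathfrak{I}_{N}}(\rho,x)-\rho s_{\mathfrak{I}_{N},M}(\rho,x)\bigr|\le\bigl\|S_{\mathfrak{I}_{N}}(x,\cdot)-S_{\mathfrak{I}_{N},M}(x,\cdot)\bigr\|_{L_{2}(0,x)}\bigl\|\sin(\rho\,\cdot)\bigr\|_{L_{2}(0,x)}$. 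It remains to bound the two factors in each product.

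For the trigonometric factor, if $\rho=\alpha+i\beta$ with $|\beta|\le C$ then $|\cos(\rho t)|\le\cosh(Ct)$ and $|\sin(\rho t)|\le\cosh(Ct)$ for $t\ge0$, so that both $\|\cos(\rho\,\cdot)\|_{L_{2}(0,x)}$ and $\|\sin(\rho\,\cdot)\|_{L_{2}(0,x)}$ are bounded by $\bigl(\int_{0}^{b}\cosh^{2}(Ct)\,dt\bigr)^{1/2}\le\bigl(\frac{\sinh(2bC)}{2C}\bigr)^{1/2}$, using $2Ct\le\sinh(2Ct)$. For the kernel factor I would use Parseval's identity for the Fourier--Legendre series: since $g_{n}=2a_{2n}$, $s_{n}=2a_{2n+1}$ and $\|P_{n}(\cdot/x)\|_{L_{2}(-x,x)}^{2}=\frac{2x}{2n+1}$,
\[
\bigl\|G_{\mathfrak{I}_{N}}^{h}(x,\cdot)-G_{\mathfrak{I}_{N},M}^{h}(x,\cdot)\bigr\|_{L_{2}(0,x)}^{2}=\sum_{n>M}\frac{4\,|a_{2n}(x)|^{2}}{x(4n+1)}\le 2\,\epsilon_{2M}(x)^{2},
\]
since $\sum_{n>M}\frac{2\,|a_{2n}(x)|^{2}}{x(4n+1)}$ is precisely the sum of the even-index terms of $\epsilon_{2M}(x)^{2}=\sum_{k>2M}\frac{2\,|a_{k}(x)|^{2}}{x(2k+1)}$; the same computation restricted to the odd indices gives $\|S_{\mathfrak{I}_{N}}(x,\cdot)-S_{\mathfrak{I}_{N},M}(x,\cdot)\|_{L_{2}(0,x)}\le\sqrt{2}\,\epsilon_{2M+1}(x)$. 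Inserting these two bounds back yields the estimates (\ref{estimatessinecosineNSBF}) and (\ref{estimatessinesineNSBF}).

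Finally, $K_{\mathfrak{I}_{N}}^{h}(x,\cdot)\in L_{2}(-x,x)$ by Theorem \ref{thoremtransmoperator}, so its Fourier--Legendre series converges in $L_{2}(-x,x)$ and $\epsilon_{M}(x)\to0$ as $M\to\infty$ for each fixed $x\in(0,b]$; together with the estimates just obtained, this shows that $c_{\mathfrak{I}_{N},M}^{h}(\rho,x)\to c_{\mathfrak{I}_{N}}^{h}(\rho,x)$ and $s_{\mathfrak{I}_{N},M}(\rho,x)\to s_{\mathfrak{I}_{N}}(\rho,x)$ pointwise in $x\in(0,b]$ and uniformly in $\rho$ on every strip $|\operatorname{Im}\rho|\le C$, hence on every compact subset of $\mathbb{C}$, which is the content of (\ref{NSBFcosine})--(\ref{NSBFsine}). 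I expect the most delicate point to be the Parseval bookkeeping for the kernel differences, where one must carefully align the even- and odd-index Legendre coefficients $\{a_{2n}\}$ and $\{a_{2n+1}\}$ of $K_{\mathfrak{I}_{N}}^{h}$ with the truncation levels $2M$ and $2M+1$ entering $\epsilon_{2M}(x)$ and $\epsilon_{2M+1}(x)$; once that is settled (and the classical Bessel integral identity recorded), the remainder is a routine adaptation of the regular case of \cite{neumann}.
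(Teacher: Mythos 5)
Your proposal is correct and follows essentially the same route as the paper's proof: term-by-term integration of the Fourier--Legendre expansions of $G_{\mathfrak{I}_{N}}^{h}$ and $S_{\mathfrak{I}_{N}}$ via the Legendre--Bessel integral identity, followed by the Cauchy--Bunyakovsky--Schwarz inequality, Parseval's identity for the kernel tail, and the $\sinh$ bound for the trigonometric factor. Your Parseval bookkeeping is in fact slightly more careful than the paper's (giving $\sqrt{2}\,\epsilon$ rather than $2\epsilon$ for the kernel factor), so your final constants are consistent with, indeed sharper than, (\ref{estimatessinecosineNSBF})--(\ref{estimatessinesineNSBF}).
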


\begin{proof}
We show the results for the solution $c_{\mathfrak{I}_{N}}^{h}(\rho,x)$ (the
proof for $s_{\mathfrak{I}_{N}}(\rho,x)$ is similar). Substitution of the
Fourier-Legendre series (\ref{Fourierexpansioncosine}) in
(\ref{transmcosinegral}) leads us to
\begin{align*}
c_{\mathfrak{I}_{N}}^{h}(\rho,x)  &  = \cos(\rho x)+\int_{0}^{x}\left(
\sum_{n=0}^{\infty}\frac{g_{n}(x)}{x}P_{2n}\left( \frac{t}{x}\right) \right)
\cos(\rho t)dt\\
& = \cos(\rho x)+\sum_{n=0}^{\infty}\frac{g_{n}(x)}{x}\int_{0}^{x}P_{2n}\left(
\frac{t}{x}\right) \cos(\rho t)dt
\end{align*}
(the exchange of the integral with the summation is due to the fact that the
integral is nothing but the inner product of the series with the function
$\overline{\cos(\rho t)}$ and the series converges in $L_{2}(0,x)$). Using
formula 2.17.7 in \cite[pp. 433]{prudnikov}
\[
\int_{0}^{a}\left\{
\begin{matrix}
P_{2n+1}\left( \frac{y}{a}\right) \cdot\sin(by)\\
P_{2n}\left( \frac{y}{a}\right) \cdot\cos(by)\\
\end{matrix}
\right\} dy= (-1)^{n}\sqrt{\frac{\pi a}{2b}}J_{2n+\delta+\frac{1}{2}}(ab),
\quad\delta=\left\{
\begin{matrix}
1\\
0
\end{matrix}
\right\} , \; a>0,
\]
we obtain the representation (\ref{NSBFcosine}). Take $C>0$ and $\rho
\in\mathbb{C}$ with $|\operatorname{Im}\rho|\leqslant C$. For $M\in\mathbb{N}$
define $G_{\mathfrak{I}_{N},M}^{h}(x,t):= K_{\mathfrak{I}_{N},2M}%
^{h}(x,t)-K_{\mathfrak{I}_{N},2M}^{h}(x,-t)= \sum_{n=0}^{M}\frac{g_{n}(x)}%
{x}P_{2n}\left( \frac{t}{x}\right) $, the $M$-th partial sum of
(\ref{Fourierexpansioncosine}). Then
\[
c_{\mathfrak{I}_{N},M}^{h}(\rho,x) = \cos(\rho x)+\int_{0}^{x}G_{\mathfrak{I}%
_{N},M}^{h}(x,t)\cos(\rho t)dt.
\]
Using the Cauchy-Bunyakovsky-Schwarz inequality we obtain
\begin{align*}
|c_{\mathfrak{I}_{N}}^{h}(\rho,x)-C_{\mathfrak{I}_{N},M}^{h}(\rho,x)|  &  =
\left| \int_{0}^{x}\left( G_{\mathfrak{I}_{N}}^{h}(x,t)-G_{\mathfrak{I}_{N}%
,M}^{h}(x,t)\right) \cos(\rho t)dt\right| \\
& = \left| \left\langle \overline{G_{\mathfrak{I}_{N}}^{h}%
(x,t)-G_{\mathfrak{I}_{N},M}^{h}(x,t)}, \cos(\rho t) \right\rangle
_{L_{2}(0,x)} \right| \\
&  \leqslant\|G_{\mathfrak{I}_{N}}^{h}(x,\cdot)-G_{\mathfrak{I}_{N},M}%
^{h}(x,\cdot)\|_{L_{2}(0,x)}\|\cos(\rho t)\|_{L_{2}(0,x)}.
\end{align*}
Since $\|K_{\mathfrak{I}_{N}}^{h}(x,\cdot)-K_{\mathfrak{I}_{N},2M}^{h}%
(x,\cdot)\|_{L_{2}(-x,x)}=\frac{1}{2}\|G_{\mathfrak{I}_{N}}^{h}(x,\cdot
)-G_{M,n}^{h}(x,\cdot)\|_{L_{2}(0,x)}$,
\begin{align*}
\int_{0}^{x}|\cos(\rho t)|^{2}dt &  \leqslant\frac{1}{4}\int_{0}^{x}\left(
|e^{i\rho t}|+|e^{-i\rho t}|\right) ^{2}dt \leqslant\frac{1}{2}\int_{0}^{x}
\left( e^{-2t\operatorname{Im\rho}}+ e^{2t\operatorname{Im\rho}}\right) dt\\
&  = \int_{-x}^{x}e^{-2\operatorname{Im}\rho t}dt = \frac{\sinh
(2x\operatorname{Im}\rho)}{\operatorname{Im}\rho}%
\end{align*}
and the function $\frac{\sinh(\xi x)}{\xi}$ is monotonically increasing in
both variables when $\xi,x\geqslant0$, we obtain
(\ref{estimatessinecosineNSBF}).
\end{proof}

Given $H\in\mathbb{C}$, we look for a pair of solutions $\psi_{\mathfrak{I}%
_{N}}^{H}(\rho,x)$ and $\vartheta_{\mathfrak{I}_{N}}(\rho,x)$ of
(\ref{Schrwithdelta}) satisfying the conditions
\begin{align}
\psi_{\mathfrak{I}_{N}}^{H}(\rho,b)=1,  &\quad  (\psi_{\mathfrak{I}_{N}}%
^{H})^{\prime}(\rho,b)=-H,\label{conditionspsi}\\
\vartheta_{\mathfrak{I}_{N}}(\rho,b)=0, &\quad   \vartheta_{\mathfrak{I}_{N}%
}^{\prime}(\rho,b)=1.\label{conditionstheta}%
\end{align}

\begin{theorem}
The solutions $\psi_{\mathfrak{I}_{N}}^{H}(\rho,x)$ and $\vartheta
_{\mathfrak{I}_{N}}(\rho,x)$ admit the integral representations
\begin{align}
\psi_{\mathfrak{I}_{N}}^{H}(\rho,x)  &  =  \cos(\rho(b-x))+\int_{x}%
^{b}\widetilde{G}_{\mathfrak{I}_{N}}^{H}(x,t)\cos(\rho
(b-t))dt,\label{solpsiintrepresentation}\\
\vartheta_{\mathfrak{I}_{N}}(\rho,x)  &  =  \frac{\sin(\rho(b-x))}{\rho}%
+\int_{x}^{b}\widetilde{S}_{\mathfrak{I}_{N}}^{H}(x,t)\frac{\sin(\rho
(b-t))}{\rho}dt,\label{solthetaintrepresentation}%
\end{align}
where the kernels $\widetilde{G}_{\mathfrak{I}_{N}}^{H}(x,t)$ and
$\widetilde{S}_{\mathfrak{I}_{N}}(x,t)$ are defined in $\Omega$ and satisfy
$\widetilde{G}_{\mathfrak{I}_{N}}^{H}(x,\cdot), \widetilde{S}_{\mathfrak{I}%
_{N}}(x,\cdot) \in L_{2}(0,x)$ for all $x\in(0,b]$. In consequence, the
solutions $\psi_{\mathfrak{I}_{N}}^{H}(\rho,x)$ and $\vartheta_{\mathfrak{I}%
_{N}}(\rho,x)$ can be written as NSBF
\begin{equation}
\label{NSBFforpsi}\psi_{\mathfrak{I}_{N}}^{H}(\rho,x)= \cos(\rho
(b-x))+\sum_{n=0}^{\infty}(-1)^{n}\tau_{n}(x)j_{2n}(\rho(b-x)),
\end{equation}
\begin{equation}
\label{NSBFfortheta}\vartheta_{\mathfrak{I}_{N}}(\rho,x)= \frac{\sin
(\rho(b-x))}{\rho}+\sum_{n=0}^{\infty}(-1)^{n}\zeta_{n}(x)j_{2n}(\rho(b-x)),
\end{equation}
with some coefficients $\{\tau_{n}(x)\}_{n=0}^{\infty}$ and $\{\zeta
_{n}(x)\}_{n=0}^{\infty}$.
\end{theorem}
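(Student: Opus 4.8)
The plan is to reduce the whole statement to results already established for the ``initial'' problem by means of the reflection $x\mapsto b-x$, which turns the terminal conditions at $x=b$ into initial conditions at $x=0$ and carries Eq. (\ref{Schrwithdelta}) into another equation of the same class. Concretely, I would set $q^{\flat}(x):=q(b-x)$ and $\mathfrak{I}_{N}^{\flat}:=\{(b-x_{N+1-j},\alpha_{N+1-j})\}_{j=1}^{N}$. Since $t\mapsto b-t$ is an isometry of $L_{2}(0,b)$ and $0<b-x_{N}<\cdots<b-x_{1}<b$, the data $(q^{\flat},\mathfrak{I}_{N}^{\flat})$ again satisfy the standing hypotheses, so every result of Sections 3--6 applies, with the obvious relabeling, to $\mathbf{L}_{q^{\flat},\mathfrak{I}_{N}^{\flat}}$. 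A short computation, using that the three conditions of Proposition \ref{propregular} are symmetric under $x\mapsto b-x$ and that the jump condition (\ref{jumpderivative}) for $\mathfrak{I}_{N}$ is carried into the corresponding jump condition for $\mathfrak{I}_{N}^{\flat}$, shows that $u\mapsto u(b-\cdot)$ is a bijection of $\mathcal{D}_{2}(\mathbf{L}_{q,\mathfrak{I}_{N}})$ onto $\mathcal{D}_{2}(\mathbf{L}_{q^{\flat},\mathfrak{I}_{N}^{\flat}})$ taking solutions of $\mathbf{L}_{q,\mathfrak{I}_{N}}y=\rho^{2}y$ to solutions of $\mathbf{L}_{q^{\flat},\mathfrak{I}_{N}^{\flat}}z=\rho^{2}z$ (Corollary \ref{proppropofsolutions}).

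\noindent Next I would identify the two solutions. If $w(y):=\psi_{\mathfrak{I}_{N}}^{H}(\rho,b-y)$, then $w(0)=1$ and $w'(0)=-(\psi_{\mathfrak{I}_{N}}^{H})'(\rho,b)=H$, so by the uniqueness statement of Remark \ref{remarkunicityofsol}, $w$ is the cosine-type solution of the reflected equation with derivative parameter $H$; that is, $\psi_{\mathfrak{I}_{N}}^{H}(\rho,x)=c_{\mathfrak{I}_{N}^{\flat}}^{H}(\rho,b-x)$. In the same way $y\mapsto\vartheta_{\mathfrak{I}_{N}}(\rho,b-y)$ has initial data $0$ and $-1$, hence $\vartheta_{\mathfrak{I}_{N}}(\rho,x)=-\,s_{\mathfrak{I}_{N}^{\flat}}(\rho,b-x)$. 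Inserting into these two identities the integral representations (\ref{transmcosinegral}), (\ref{transmsinegral}) and the NSBF expansions (\ref{NSBFcosine}), (\ref{NSBFsine}) written for $\mathbf{L}_{q^{\flat},\mathfrak{I}_{N}^{\flat}}$, and changing the variable $t\mapsto b-t$ in the integrals and $z\mapsto\rho(b-x)$ in the Bessel arguments, one obtains (\ref{solpsiintrepresentation})--(\ref{solthetaintrepresentation}) with $\widetilde{G}_{\mathfrak{I}_{N}}^{H}(x,t)=G_{\mathfrak{I}_{N}^{\flat}}^{H}(b-x,b-t)$ and $\widetilde{S}_{\mathfrak{I}_{N}}(x,t)=-S_{\mathfrak{I}_{N}^{\flat}}(b-x,b-t)$, and then (\ref{NSBFforpsi})--(\ref{NSBFfortheta}) with $\tau_{n}(x)=g_{n}^{\flat}(b-x)$ and $\zeta_{n}(x)=-s_{n}^{\flat}(b-x)$, where $g_{n}^{\flat},s_{n}^{\flat}$ are the Fourier--Legendre coefficients of the cosine and sine kernels of $\mathbf{L}_{q^{\flat},\mathfrak{I}_{N}^{\flat}}$. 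The memberships $\widetilde{G}_{\mathfrak{I}_{N}}^{H}(x,\cdot),\widetilde{S}_{\mathfrak{I}_{N}}(x,\cdot)\in L_{2}(0,x)$ and the asserted convergence of the series then follow at once from Theorem \ref{thoremtransmoperator} and Theorem \ref{ThNSBF1} applied to the reflected equation, composed with $x\leftrightarrow b-x$; and the explicit form of $\tau_{n},\zeta_{n}$ in terms of the formal powers of a non-vanishing solution $f^{\flat}\in\mathcal{D}_{2}(\mathbf{L}_{q^{\flat},\mathfrak{I}_{N}^{\flat}})$ (which exists by Proposition \ref{propnonvanishingsol}) comes from Proposition \ref{propcoeffwithformalpowers}.

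\noindent I do not expect a genuine obstacle here: the argument is essentially bookkeeping about how the reflection $x\mapsto b-x$ acts on the interaction data $\mathfrak{I}_{N}$ and on Cauchy data. The one place that needs care is the consistent tracking of signs and of the parity of the spherical Bessel indices through this substitution --- in particular the overall minus sign coming from $\vartheta_{\mathfrak{I}_{N}}(\rho,\cdot)=-s_{\mathfrak{I}_{N}^{\flat}}(\rho,b-\cdot)$, which must be carried through (\ref{transmsinegral}) and (\ref{NSBFsine}) --- together with verifying that the reflected data indeed satisfy the standing hypotheses so that all the machinery of the earlier sections is applicable. An alternative but more laborious route would be to redo the constructions of Sections 3--6 from the endpoint $b$ instead of $0$; the reflection argument avoids that repetition.
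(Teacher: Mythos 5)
Your proposal is correct and follows essentially the same route as the paper: reflect via $x\mapsto b-x$, identify $\psi_{\mathfrak{I}_{N}}^{H}(\rho,b-\cdot)$ with the cosine solution of the reflected equation with data $q(b-\cdot)$ and interactions $\{(b-x_{N+1-j},\alpha_{N+1-j})\}_{j=1}^{N}$, and pull back the integral and NSBF representations by the change of variables $t\mapsto b-t$ (the paper writes this out only for $\psi_{\mathfrak{I}_{N}}^{H}$ and declares the $\vartheta_{\mathfrak{I}_{N}}$ case similar). Your careful sign-tracking for $\vartheta_{\mathfrak{I}_{N}}$ --- obtaining $\vartheta_{\mathfrak{I}_{N}}(\rho,x)=-s_{\mathfrak{I}_{N}^{\flat}}(\rho,b-x)$ from the terminal data $\vartheta_{\mathfrak{I}_{N}}^{\prime}(\rho,b)=1$ --- is precisely the point the paper glosses over, and it shows that the leading term $+\sin(\rho(b-x))/\rho$ in (\ref{solthetaintrepresentation}) is consistent only with the normalization $\vartheta_{\mathfrak{I}_{N}}^{\prime}(\rho,b)=-1$, so either that condition or the displayed formula carries a sign typo in the paper, not in your argument.
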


\begin{proof}
We prove the results for $\psi_{\mathfrak{I}_{N}}^{H}(\rho,x)$ (the proof for
$\vartheta_{\mathfrak{I}_{N}}(\rho,x)$ is similar). Set $y(\rho,x)=
\psi_{\mathfrak{I}_{N}}^{H}(\rho,b-x)$. Note that $y(\rho, 0)=1$, $y^{\prime
}(\rho,0)= H$ and for $\phi\in C_{0}^{\infty}(0,b)$ we have%

\begin{align*}
(y^{\prime\prime2}y(x), \phi(x))_{C_{0}^{\infty}(0,b)}  &  = (\psi
_{\mathfrak{I}_{N}}^{H}(\rho,x),\phi^{\prime\prime2}\phi(b-x) )_{C_{0}%
^{\infty}(0,b)}\\
& = (q(x)\psi_{\mathfrak{I}_{N}}^{H}(\rho,x), \phi(b-x))_{C_{0}^{\infty}%
(0,b)}+\sum_{k=0}^{N}\alpha_{k}\psi_{\mathfrak{I}_{N}}^{H}(\rho,x_{k}%
)\phi(b-x_{k})\\
& = (q(b-x)y(x),\phi(x))_{C_{0}^{\infty}(0,b)}+\sum_{k=0}^{N}\alpha
_{k}y(b-x_{k})\phi(b-x_{k}),
\end{align*}
that is, $\psi_{\mathfrak{I}_{N}}^{H}(\rho,x)$ is a solution of
(\ref{Schrwithdelta}) iff $y(x)=\psi_{\mathfrak{I}_{N}}^{H}(\rho,b-x)$ is a
solution of
\begin{equation}
\label{reversedequation}-y^{\prime\prime}(x)+\left( q(b-x)+\sum_{k=0}%
^{N}\alpha_{k}\delta(x-(b-x_{k}))\right) y(x)=\rho^{2}y(x).
\end{equation}
Since $0<b-x_{N}<\cdots<b-x_{0}<b$, hence (\ref{reversedequation}) is of the
type (\ref{Schrwithdelta}) with the point interactions $\mathfrak{I}_{N}%
^{*}=\{(b-x_{N-j},\alpha_{N-j})\}_{j=0}^{N}$ and $\psi_{\mathfrak{I}_{N}}%
^{H}(\rho,b-x)$ is the corresponding solution $c_{\mathfrak{I}_{N}^{*}}%
^{H}(\rho,x)$ for (\ref{reversedequation}). Hence
\begin{equation}
\label{integralrepresentationpsi}\psi_{\mathfrak{I}_{N}}^{H}(\rho,b-x)=
\cos(\rho x)+ \int_{0}^{x}G_{\mathfrak{I}_{N}^{*}}^{H}(x,t)\cos(\rho t)dt
\end{equation}
for some kernel $G_{\mathfrak{I}_{N}^{*}}^{H}(x,t)$ defined on $\Omega$ with
$\widetilde{G}_{\mathfrak{I}_{N}}^{H}(x,\cdot)\in L_{2}(0,x)$ for $x\in(0,b]$.
Thus,
\begin{align*}
\psi_{\mathfrak{I}_{N}}(\rho,x)  &  =\cos(\rho(b-x))+\int_{0}^{b-x}%
G_{\mathfrak{I}_{N}^{*}}^{H}(b-x,t)\cos(\rho t)dt\\
& =\psi_{\mathfrak{I}_{N}}(\rho,x)=\cos(\rho(b-x))+\int_{x}^{b}G_{\mathfrak{I}%
_{N}^{*}}^{H}(b-x,b-t)\cos(\rho(b-t))dt,
\end{align*}
where the change of variables $x\mapsto b-x$ was used. Hence we obtain
(\ref{solpsiintrepresentation}) with $\widetilde{G}_{\mathfrak{I}_{N}^{*}}%
^{H}(x,t)=G_{\mathfrak{I}_{N}^{*}}^{H}(b-x,b-t)$ In consequence, by Theorem
\ref{ThNSBF1} we obtain (\ref{NSBFforpsi}).
\end{proof}

\begin{remark}
As in Remark \ref{remarkfirstcoeffcosine}
\begin{equation}
\label{firstcoeffpsi}\tau_{0}(x)= \psi_{\mathfrak{I}_{N}}^{H}(0,x)-1
\quad\mbox{and }\; \zeta_{0}(x)=3\left( \frac{\vartheta_{\mathfrak{I}_{N}%
}(0,x)}{b-x}-1\right) .
\end{equation}

\end{remark}

\begin{remark}
\label{remarkentirefunction}  Let $\lambda\in\mathbb{C}$ and $\lambda=\rho
^{2}$. 

\begin{itemize}

\item[(i)] The functions $\widehat{s}_{k}(\rho,x-x_{k})$ are entire with
respect to $\rho$. Then from (\ref{generalsolcauchy}) $c_{\mathfrak{I}_{N}%
}^{h}(\rho,x)$, $s_{\mathfrak{I}_{N}}(\rho,x)$ and $\psi_{\mathfrak{I}_{N}%
}^{H}(\rho, x)$ are entire as well. 

\item[(ii)] Suppose that $q$ is real valued and $\alpha_{0}, \dots, \alpha
_{N}, u_{0}, u_{1}\in\mathbb{R}$. If $u(\lambda,x)$ is a solution of
$u^{(k)}(\lambda,0)=u_{k}$, $k=0,1$, then by the uniqueness of the Cauchy
problem $\overline{u(\lambda,x)}=u(\overline{\lambda},x)$. In particular, for
$\rho, h, H\in\mathbb{R}$, the solutions $c_{\mathfrak{I}_{N}}^{h}(\rho,x)$,
$s_{\mathfrak{I}_{N}}(\rho,x)$ and $\psi_{\mathfrak{I}_{N}}^{H}(\rho, x)$ are
real valued. 
\end{itemize}
\end{remark}

\subsection{A recursive integration procedure for the coefficients $\{a_n(x)\}_{n=0}^{\infty}$}

Similarly to the case of the regular Schr\"odinger equation \cite{directandinverse,neumann,neumann2}, we derive formally a recursive integration procedure for computing the Fourier-Legendre coefficients $\{a_n(x)\}_{n=0}^{\infty}$ of the canonical transmutation kernel $K_{\mathfrak{J}_N}^f(x,t)$. Consider the sequence of functions $\sigma_n(x):=x^na_n(x)$ for $n\in \mathbb{N}_0$. According to Remark \ref{remarkcoefficientsaregood}, $\{\sigma_n(x)\}_{n=0}^{\infty}\subset \mathcal{D}_2\left(\mathbf{L}_{q,\mathfrak{J}_N}\right)$.

\begin{remark}\label{Remarksigmafunctionproperties}
	\begin{itemize}
		\item[(i)]  By Remark \ref{remarkfirstcoeffcosine}, 
		\begin{equation}\label{sigma0}
			\sigma_0(x)=\frac{f(x)-1}{2}.
		\end{equation}
		\item[(ii)] By (\ref{CoeffFourier1formalpowers}), $a_1(x)=\frac{3}{2}\left(\frac{\varphi_f^{(1)}(x)}{x}-1\right)$. Thus, from (\ref{formalpowers}) and (\ref{auxiliaryformalpowers}) we have
		\begin{equation}\label{sigma1}
			\sigma_1(x)=\frac{3}{2}\left(f(x)\int_0^x\frac{dt}{f^2(t)}-x\right).
		\end{equation}
	\item[(iii)] For $n\geqslant 2$, $\sigma_n(0)=0$, and by (\ref{CoeffFourier1formalpowers}) we obtain 
	\begin{align*}
		D\sigma_n(x) & = \left(n+\frac{1}{2}\right)\sum_{k=0}^{n}l_{k,n}D\left(x^{n-k}\varphi_f^{(k)}(x)\right)\\
		& =\left(n+\frac{1}{2}\right)\left(\sum_{k=0}^{n-1}l_{k,n} (n-k)x^{n-k-1}\varphi_f^{(k)}(x)+\sum_{k=0}^{n}l_{k,n}x^{n-k}D\varphi_f^{(k)}(x)\right). 
	\end{align*}
	By (\ref{formalpowersderivative}) and (\ref{auxiliaryformalpowers}), $D\varphi_f^{(k)}(0)=0$ for $k\geqslant 1$. Hence, $\sigma_n'(0)=0$.
	\end{itemize}
\end{remark}
Denote by $c_{\mathfrak{J}_N}^f(\rho,x)$ the solution of (\ref{Schrwithdelta}) satisfying (\ref{cosinegralinitialconds}) with $h=f'(0)$. On each interval $[x_k,x_{k+1}]$, $k=0, \cdots, N$, $c_{\mathfrak{J}_N}^f(\rho,x)$ is a solution of the regular equation (\ref{schrodingerregular}). In \cite[Sec. 6]{neumann}  by substituting the Neumann series (\ref{NSBFcosine}) of $c_{\mathfrak{J}_N}^f(\rho,x)$ into Eq. (\ref{schrodingerregular}) it was proved that the functions $\{\sigma_{2n}(x)\}_{n=0}^{\infty}$ must satisfy, at least formally, the recursive relations
\begin{equation}\label{firstrecursiverelation}
	\mathbf{L}_q\sigma_{2n}(x) = \frac{4n+1}{4n-3}x^{4n-1}\mathbf{L}_q\left[\frac{\sigma_{2n-2}(x)}{x^{4n-3}}\right], \quad x_k<x<x_k
\end{equation}
for $k=0,\cdots, N$. Similarly, substitution of the Neumann series (\ref{NSBFsine}) of $s_{\mathfrak{J}_N}(\rho,x)$ into (\ref{schrodingerregular}) leads to the equalities
\begin{equation}\label{secondrecursiverelation}
	\mathbf{L}_q\sigma_{2n+1}(x) = \frac{4n+3}{4n+1}x^{4n+3}\mathbf{L}_q\left[\frac{\sigma_{2n+1}(x)}{x^{4n+1}}\right], \quad x_k<x<x_k.
\end{equation}
    Taking into account that $\sigma_n\in \mathcal{D}_2\left(\mathbf{L}_{q,\mathfrak{J}_N}\right)$ and combining (\ref{firstrecursiverelation}), by Remark \ref{Remarksigmafunctionproperties}(iii) and (\ref{secondrecursiverelation}) we obtain that the functions $\sigma_n(x)$, $n\geqslant 2$, must satisfy (at least formally) the following Cauchy problems
\begin{equation}\label{recurrencerelationsigma}
\begin{cases} 
	\displaystyle \mathbf{L}_{q,\mathfrak{J}_N}\sigma_n(x) = \frac{2n+1}{2n-3}x^{2n-1}\mathbf{L}_q\left[\frac{\sigma_{n-2}(x)}{x^{2n-3}}\right], \quad 0<x<b, \\
	\sigma_n(0) =\sigma'_n(0)=0.
\end{cases} 
\end{equation}

\begin{remark}\label{remarkcontinuousquotient}
	If $g\in \mathcal{D}_2\left(L_{q,\mathfrak{J}_N}\right)$, then $\frac{g}{f}\in H^2(0,b)$.
	
	Indeed, $\frac{g}{f}\in C[0,b]$, and the jump of the derivative at $x_k$ is given by
	\begin{align*}
		\left(\frac{g}{f}\right)'(x_k+)-	\left(\frac{g}{f}\right)'(x_k-) & = \frac{g'(x_k+)f(x_k)-f'(x_k+)g(x_k)}{f^2(x_k)}-\frac{g'(x_k-)f(x_k)-f'(x_k-)g(x_k)}{f^2(x_k)}\\
		& = \frac{1}{f^2(x_k)}\left[\left(g'(x_k+)-g'(x_k-)\right)f(x_k)-g(x_k)\left(f'(x_k+)-f'(x_k-)\right) \right]\\
		& = \frac{1}{f^2(x_k)}\left[ \alpha_kg(x_k)f(x_k)-\alpha_kg(x_k)f(x_k)  \right]=0.
	\end{align*}
Hence $\frac{g}{f}\in AC[0,b]$, and then $\frac{g}{f}\in H^2(0,b)$.
\end{remark}

\begin{proposition}
	The sequence $\{\sigma_n(x)\}_{n=0}^{\infty}$ satisfying the recurrence relation (\ref{recurrencerelationsigma}) for $n\geqslant 2$, with $\sigma_0(x)=\frac{f(x)-1}{2}$ and $\sigma_1(x)= \frac{3}{2}\left(f(x)\int_0^x\frac{dt}{f^2(t)}-x\right)$,  is given by
\begin{equation}\label{recurrenceintegralsigma}
  \sigma_n(x)= \frac{2n+1}{2n-3}\left( x^2\sigma_{n-2}(x)+2(2n-1)\theta_n(x)\right), \quad n\geqslant 2,	
\end{equation}
where
\begin{equation}
	\theta_n(x):= \int_0^x\left(\eta_n(t)-tf(t)\sigma_{n-2}(t)\right)\frac{dt}{f^2(t)}, \quad n\geqslant 2,
\end{equation}
and 
\begin{equation}
	\eta_n(x):= \int_0^x\left((n-1)f(t)+tf'(t)\right)\sigma_{n-2}(t)dt, \quad n\geqslant 2.
\end{equation}
\end{proposition}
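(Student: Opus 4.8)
The plan is to read the recurrence (\ref{recurrencerelationsigma}) as an inhomogeneous Cauchy problem $\mathbf{L}_{q,\mathfrak{I}_N}\sigma_n=g_n$, $\sigma_n(0)=\sigma_n'(0)=0$, with source $g_n(x)=\frac{2n+1}{2n-3}x^{2n-1}\mathbf{L}_q[\sigma_{n-2}(x)/x^{2n-3}]$, and to solve it explicitly via the right-inverse $\mathbf{R}_{\mathfrak{I}_N}^f$ of Remark \ref{remarknonhomeq}. By formula (\ref{solutionnonhomogeneouseq}), the solution with zero initial data is $\sigma_n=\mathbf{R}_{\mathfrak{I}_N}^f g_n=-f(x)\int_0^x f^{-2}(t)\int_0^t f(s)g_n(s)\,ds\,dt$, and uniqueness (Remark \ref{remarkunicityofsol}) guarantees that this coincides with the sequence in the statement; the vanishing of $\sigma_n$ and $\sigma_n'$ at the origin is automatic from the double integration. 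Equivalently, one may simply set $\widehat{\sigma}_n$ equal to the right-hand side of (\ref{recurrenceintegralsigma}) and verify a posteriori that it solves the Cauchy problem, but the derivation below is what actually produces the formula.

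First I would put the source in a manageable form. On each subinterval $(x_k,x_{k+1})$, where $\mathbf{L}_q$ acts classically and $\sigma_{n-2}/f\in H^2$ by Remark \ref{remarkcontinuousquotient}, a direct differentiation gives the identity $x^{2n-1}\mathbf{L}_q[\sigma_{n-2}/x^{2n-3}]=x^2\mathbf{L}_q\sigma_{n-2}+2(2n-3)x\sigma_{n-2}'-(2n-3)(2n-2)\sigma_{n-2}$, so that $g_n=\frac{2n+1}{2n-3}\bigl(x^2\mathbf{L}_q\sigma_{n-2}+2(2n-3)x\sigma_{n-2}'-(2n-3)(2n-2)\sigma_{n-2}\bigr)$. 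Next I would peel off the leading term: from $\mathbf{L}_q[x^2\sigma_{n-2}]=x^2\mathbf{L}_q\sigma_{n-2}-2\sigma_{n-2}-4x\sigma_{n-2}'$, together with $x^2\sigma_{n-2}\in\mathcal{D}_2(\mathbf{L}_{q,\mathfrak{I}_N})$ by Remark \ref{remarkidealdomain}, the function $\sigma_n-\frac{2n+1}{2n-3}x^2\sigma_{n-2}$, which again has zero Cauchy data at $0$, satisfies $\mathbf{L}_{q,\mathfrak{I}_N}\bigl(\sigma_n-\frac{2n+1}{2n-3}x^2\sigma_{n-2}\bigr)=\frac{2(2n+1)(2n-1)}{2n-3}\bigl(x\sigma_{n-2}'-(n-2)\sigma_{n-2}\bigr)$, the arithmetic identities $2(2n-3)+4=2(2n-1)$ and $2-(2n-3)(2n-2)=-2(2n-1)(n-2)$ being the only bookkeeping needed. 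Applying $\mathbf{R}_{\mathfrak{I}_N}^f$ therefore reduces everything to computing $\mathbf{R}_{\mathfrak{I}_N}^f[x\sigma_{n-2}'-(n-2)\sigma_{n-2}]$.

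That last evaluation is a single integration by parts: $\int_0^t sf(s)\sigma_{n-2}'(s)\,ds=tf(t)\sigma_{n-2}(t)-\int_0^t(f(s)+sf'(s))\sigma_{n-2}(s)\,ds$, whose boundary term at $s=0$ vanishes and which picks up no extra contribution at the points $x_k$ because $sf(s)$ is absolutely continuous on $[0,b]$. This collapses $\int_0^t f(s)(s\sigma_{n-2}'(s)-(n-2)\sigma_{n-2}(s))\,ds$ to $tf(t)\sigma_{n-2}(t)-\eta_n(t)$ with $\eta_n$ exactly as in the statement; substituting this into the outer operator $-f(x)\int_0^x f^{-2}(t)(\cdots)\,dt$ and simplifying yields precisely the right-hand side of (\ref{recurrenceintegralsigma}), while the base identities $\sigma_0(x)=\tfrac{f(x)-1}{2}$ and $\sigma_1(x)=\tfrac32(f(x)\int_0^x dt/f^2(t)-x)$ are those recorded in Remark \ref{Remarksigmafunctionproperties}(i)--(ii). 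The main obstacle I anticipate is the algebraic bookkeeping: carrying the combinatorial constants $\tfrac{2n+1}{2n-3}$, $2(2n-1)$, $n-1$, $n-2$ correctly through the two reductions and the integration by parts, and, at each stage, checking that $\mathbf{L}_q$ applied to the products $x^2\sigma_{n-2}$ and quotients $\sigma_{n-2}/x^{2n-3}$ produces no spurious $\delta$-contributions at the interaction points — which is exactly where the jump relations defining $\mathcal{D}_2(\mathbf{L}_{q,\mathfrak{I}_N})$ in Proposition \ref{propregular}, and the cancellations built into the Polya--Abel form of $\mathbf{R}_{\mathfrak{I}_N}^f$, are used.
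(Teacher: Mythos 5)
Your proposal is correct and follows the same overall strategy as the paper — interpret (\ref{recurrencerelationsigma}) as the inhomogeneous Cauchy problem (\ref{auxcauchyproblem2}) with zero data, solve it by the explicit Polya--Abel right inverse $\mathbf{R}_{\mathfrak{I}_N}^{f}$ of Remark \ref{remarknonhomeq}, and reduce the resulting double integral by integration by parts — but you organize the reduction differently. The paper keeps the source in the factorized form $-\tfrac1f Df^2D[\tfrac{g}{s^{2n-3}f}]$ inside the inner integral, integrates by parts against $s^{2n-1}$, and then must argue that the boundary term $s^{2n-1}f^2 D(\tfrac{g}{s^{2n-3}f})$ collapses to $s^2f^2D(g/f)-(2n-3)sfg$ and is continuous at $s=0$ (using Remark \ref{remarkcontinuousquotient}). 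You instead perform the operator algebra first: the Leibniz identity $x^{2n-1}\mathbf{L}_q[u/x^{2n-3}]=x^2\mathbf{L}_q u+2(2n-3)xu'-(2n-3)(2n-2)u$, followed by peeling off $\mathbf{L}_q[x^2\sigma_{n-2}]$ (legitimate at the interaction points by Remark \ref{remarkidealdomain}), leaves only $\mathbf{R}_{\mathfrak{I}_N}^{f}$ applied to the first-order expression $x\sigma_{n-2}'-(n-2)\sigma_{n-2}$, and a single integration by parts of $\int_0^t sf\sigma_{n-2}'$ produces $\eta_n$. Your constants check out ($2(2n-3)+4=2(2n-1)$, $2-(2n-3)(2n-2)=-2(2n-1)(n-2)$), and your route has the advantage of never manipulating the singular quotient $g/s^{2n-3}$ under the integral sign and of making the origin of the term $x^2\sigma_{n-2}$ transparent; the paper's route keeps everything in one chain of integrals but needs the continuity argument at $s=0$.

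One point to tighten: what your computation (and, in fact, the paper's own intermediate computation) actually yields is
\begin{equation*}
\sigma_n(x)=\frac{2n+1}{2n-3}\left(x^2\sigma_{n-2}(x)+2(2n-1)\,f(x)\,\theta_n(x)\right),
\end{equation*}
since the outer factor $f(x)$ of $\mathbf{R}_{\mathfrak{I}_N}^{f}$ multiplies the integral defining $\theta_n$ and only cancels inside the $x^2\sigma_{n-2}$ term. The printed formulas (\ref{auxsol2}) and (\ref{recurrenceintegralsigma}) omit this factor (and (\ref{auxsol2}) also carries a stray sign), so your claim that the substitution yields ``precisely'' the right-hand side of (\ref{recurrenceintegralsigma}) inherits a typo of the paper rather than introducing an error of your own; you should state the $f(x)$ factor explicitly.
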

\begin{proof}
	Set $g\in \mathcal{D}_2\left(\mathbf{L}_{q,\mathfrak{J}_N}\right)$ and $n\geqslant 2$. Consider the Cauchy problem 
	\begin{equation}\label{auxcauchyproblem2}
		\begin{cases} 
			\displaystyle \mathbf{L}_{q,\mathfrak{J}_N}u_n(x) = \frac{2n+1}{2n-3}x^{2n-1}\mathbf{L}_q\left[\frac{g(x)}{x^{2n-3}}\right], \quad 0<x<b, \\
			u_n(0) =u'_n(0)=0.
		\end{cases} 
	\end{equation}
	By formula (\ref{solutionnonhomogeneouseq}) and the Polya factorization $\mathbf{L}_q= -\frac{1}{f}Df^2D\frac{1}{f}$ we obtain that the unique solution of the Cauchy problem (\ref{auxcauchyproblem2}) is given by
	\[
	u_n(x)= \frac{2n+1}{2n-3} f(x)\int_0^x \frac{1}{f^2(t)}\left(\int_0^t s^{2n-1} Df^2(s)D\left[ \frac{g(s)}{s^{2n-3}f(s)}\right]ds \right)dt.
	\]
	
 Consider an antiderivative $\int s^{2n-1} Df^2(s)D\left[ \frac{g(s)}{s^{2n-3}f(s)}\right]ds$. Integration by parts gives
 \begin{align*}
 	\int s^{2n-1} Df^2(s)D\left[ \frac{g(s)}{s^{2n-3}f(s)}\right]ds & = s^{2n-1}f^2(s)D\left(\frac{g(s)}{s^{2n-3}f(s)}\right)-(2n-1)sf(s)g(s) \\
 	& \quad + \int \left((2n-1)(2n-2)f(s)+2(2n-1)sf'(s)\right)g(s)ds.
 \end{align*}
Note that
\begin{align*}
	s^{2n-1}f^2(s)D\left(\frac{g(s)}{s^{2n-3}f(s)}\right) & = s^{2n-1}f^2(s)\frac{D\left(\frac{g(s)}{f(s)}\right)}{s^{2n-3}}-s^{2n-1}f^2(s)\frac{\frac{g(s)}{f(s)}}{s^{4n-6}}(2n-3)s^{2n-4}\\
	& = s^2f^2(s)D\left(\frac{g(s)}{f(s)}\right)-(2n-3)sf(s)g(s).
\end{align*}

Since $g\in \mathcal{D}_2\left(\mathbf{L}_{q,\mathfrak{J}_N}\right)$, by Remark \ref{remarkcontinuousquotient}, $D\left(\frac{g(s)}{f(s)}\right)$ is continuous in $[0,b]$. Thus,
\begin{align*}
	\int s^{2n-1} Df^2(s)D\left[ \frac{g(s)}{s^{2n-3}f(s)}\right]ds 
	& = s^2f^2(s)D\left(\frac{g(s)}{f(s)}\right)-(4n-4)sf(s)g(s)\\
	&\quad  + 2(2n-1)\int\left((n-1)f(s)+sf'(s)\right)ds
\end{align*}
is well defined at $s=0$ and is continuous in $[0,b]$. Then we obtain that 
\begin{align*}
\Phi(t) & :=  	\int_0^t s^{2n-1} Df^2(s)D\left[ \frac{g(s)}{s^{2n-3}f(s)}\right]ds \\
 & = t^2f^2(t)D\left(\frac{g(t)}{f(t)}\right)-(4n-4)tf(t)g(t)  + 2(2n-1)\Theta_n[g](t),
\end{align*}
with $H_n[g](t):= \displaystyle \int_0^t \left((n-1)f(s)+sf'(s)\right)g(s)ds$, 
is a continuous function in $[0,b]$. Now, 
\begin{align*}
	\int_0^x \Phi(t)\frac{dt}{f^2(t)}  &= \int_0^x t^2D\left[\frac{g(t)}{f(t)}\right]dt-(4n-4)\int_{0}^{t} t\frac{g(t)}{f(t)}dt+2(2n-1)H_n[g](t)\\
	& = x^2\frac{g(x)}{f(x)}-2(2n-1)\int_0^x\left[H_n[g](t)-tf(t)g(t)\right]dt.
\end{align*}
Hence
\begin{equation}\label{auxsol2}
	u_n(x)= \frac{2n+1}{2n-3}\left(x^2g(x)-2(2n-1)\Theta_n[g](x)\right),
\end{equation}
 with $\Theta_n[g](x):= \displaystyle \int_0^x\left[H_n[g](t)-tf(t)g(t)\right]dt$.

 Finally, since $\sigma_0, \sigma_1\in \mathcal{D}_2\left(\mathbf{L}_{q,\mathfrak{J}_{N}}\right)$, formula (\ref{recurrenceintegralsigma}) is obtained for all $n\geqslant 2$ by induction, taking $g=\sigma_{2n-2}$ in (\ref{auxcauchyproblem2}) and $\eta_n(x)=H_n[\sigma_{n-2}](x)$, $\theta_n(x)=\Theta_n[\sigma_{n-2}](x)$ in (\ref{auxsol2}). 
\end{proof}

Integral relations of type (\ref{recurrenceintegralsigma}) are effective for the numerical computation of the partial sums (\ref{NSBFcosineaprox}) and (\ref{NSBFsineaprox}), as seen in \cite{neumann,neumann2}.
\section{Integral representation for the derivative}

Since $e_{\mathfrak{I}_{N}}^{h}(\rho, \cdot)\in AC[0, b]$, it is worthwhile
looking for an integral representation of the derivative of $e_{\mathfrak{I}%
_{N}}^{h}(\rho,x)$. Differentiating (\ref{SoleGral}) we obtain
\begin{align*}
(e_{\mathfrak{I}_{N}}^{h})^{\prime}(\rho,x)  &  =\widetilde{e}_{h}^{\prime
}(\rho,x)+\sum_{k=0}^{N}\alpha_{k}\widetilde{e}_{h}(\rho, x_{k})H(x-x_{k}%
)\widehat{s}^{\prime}_{k} (\rho, x-x_{k})\\
&  \quad+\sum_{J\in\mathcal{I}_{N}}\alpha_{J} H(x-x_{j_{|J|}})\widetilde
{e}_{h}(\rho,x_{j_{1}})\left( \prod_{l=1}^{|J|-1}\widehat{s}_{j_{l}}%
(\rho,x_{j_{l+1}}-x_{j_{l}})\right) \widehat{s}_{j_{|J|}}^{\prime}%
(\rho,x-x_{j_{|J|}}).
\end{align*}
Differentiating (\ref{representationsinegeneral1}) and using that $\widehat
{H}_{k}(x,x)=\frac{1}{2}\int_{0}^{x}q(t+x_{k})dt$, we obtain
\begin{align*}
\widehat{s}_{k}^{\prime}(\rho,x)  &  = \cos(\rho x)+\frac{1}{2}\frac{\sin(\rho
x)}{\rho}\int_{0}^{x}q(t+x_{k})dt+\int_{0}^{x}\partial_{x}\widehat{H}%
_{k}(x,t)\frac{\sin(\rho t)}{\rho}dt.
\end{align*}
Denote
\begin{equation}
\label{antiderivativeq}w(y,x):= \frac{1}{2}\int_{y}^{x}q(s)ds \quad
\mbox{ for }\; x,y\in[0,b].
\end{equation}
Hence, the derivative $\widehat{s}_{k}^{\prime}(\rho,x-x_{k})$ can be written
as
\begin{equation}
\widehat{s}_{k}^{\prime}(\rho,x-x_{k})= \cos(\rho(x-x_{k}))+\int_{-(x-x_{k}%
)}^{x-x_{k}}\widetilde{K}_{k}^{1}(x,t)e^{i\rho t}dt,
\end{equation}
where $\displaystyle \widetilde{K}_{k}^{1}(x,t)= w(x_{k},x)+\frac{1}{2}%
\int_{|t|}^{x-x_{k}}\partial_{x}\widehat{H}_{k}(x,t)dt$.

On the other hand, differentiation of (\ref{transm1}) and the Goursat
conditions for $\widetilde{K}^{h}(x,t)$ lead to the equality
\begin{equation}
\tilde{e}^{\prime}_{h}(\rho,x)= (i\rho+w(0,x))e^{i\rho x}+h\cos(\rho
x)+\int_{-x}^{x} \partial_{x}\widetilde{K}^{h}(x,t)e^{i\rho t}dt.
\end{equation}
Using the fact that
\[
\cos(\rho A)\int_{-B}^{B}f(t)e^{i\rho t}dt= \int_{-(B+A)}^{B+A}\frac{1}%
{2}\left( \chi_{[-(B+A),B-A]}(t)f(t-A)+\chi_{[A-B,B+A]}(t)f(t+A)\right)
e^{i\rho t}dt
\]
for $A,B>0$ and $f\in L_{2}(\mathbb{R})$ with $\operatorname{Supp}%
(f)\subset[-B,B]$, we obtain
\begin{align*}
\tilde{e}_{h}(\rho, x_{j})\widehat{s}_{k}^{\prime}(\rho,x-x_{k})  &  =e^{i\rho
x_{j}}\cos(\rho(x-x_{k}))+\mathcal{F}\left[  \widehat{K}_{x_{j},x_{k}%
}(x,t)\right] ,
\end{align*}
where
\begin{align*}
\widehat{K}_{x_{j},x_{k}}(x,t)  &  = \chi_{[x_{k}-x-x_{j},x-x_{k}-x_{j}%
]}(t)\widetilde{K}_{k}^{1}(x,t-x_{j})+\chi_{x_{j}}(t)\widetilde{K}^{h}%
(x_{j},t)\ast\chi_{x-x_{k}}(t)\widehat{K}_{k}^{1}(x,t)\\
& \qquad+\frac{1}{2}\chi_{[x_{k}-x_{j}-x,x_{j}-x+x_{k}]}(t)\widehat{K}%
^{h}(x_{j},t-x+x_{k})\\
& \qquad+\frac{1}{2}\chi_{[x-x_{k}-x_{j},x-x_{k}+x_{j}]}(t)\widehat{K}%
^{h}(x_{j},t+x-x_{k})\Big).
\end{align*}
By Lemma \ref{lemaconv} the support of $\widehat{K}_{x_{j},x_{k}}(x,t)$
belongs to $[x_{k}-x-x_{j},x-x_{k}+x_{j}]$. Using the equality
\[
\prod_{l=1}^{|J|-1}\widehat{s}_{j_{l}}(\rho,x_{j_{l+1}}-x_{j_{l}}%
)=\mathcal{F}\left\{ \left( \prod_{l=1}^{|J|-1}\right) ^{\ast}\left(
\chi_{x_{j_{l+1}}-x_{j_{l}}}(t)\widetilde{K}_{k}(x_{j_{l+1}},t)\right)
\right\}
\]
we have
\[
(e_{\mathfrak{I}_{N}}^{h})^{\prime i\rho x}+h\cos(\rho x)+\sum_{k=0}^{N}%
\alpha_{k}H(x-x_{k})e^{i\rho x_{k}}\cos(\rho(x-x_{k}))+\mathcal{F}\left\{
E_{\mathfrak{I}_{N}}^{h}(x,t)\right\}
\]
where
\begin{align*}
E_{\mathfrak{I}_{N}}^{h}(x,t)  &  = \chi_{x}(t)\partial_{x}\widetilde{K}%
^{h}(x,t)+\sum_{k=0}^{N}\alpha_{k} H(x-x_{k})\widehat{K}_{x_{k},x_{k}}(x,t)\\
&  \quad+\sum_{J\in\mathcal{I}_{N}}\alpha_{J} H(x-x_{j_{|J|}}) \widehat
{K}_{x_{j_{1}},x_{j_{|J|}}}(x,t)\ast\left( \prod_{l=1}^{|J|-1}\right) ^{\ast
}\left( \chi_{x_{j_{l+1}}-x_{j_{l}}}(t)\widetilde{K}_{k}(x_{j_{l+1}},t)\right)
.
\end{align*}
Again, by Lemma \ref{lemaconv} the support of $E_{\mathfrak{I}_{N}}^{h}(x,t)$
belongs to $[-x,x]$. Since $e^{i\rho x_{k}}\cos(\rho(x-x_{k}))=\frac{1}%
{2}e^{i\rho x}\left( 1+e^{-2i\rho(x-x_{k})}\right) $, we obtain the following representation.

\begin{theorem}
The derivative $(e_{\mathfrak{I}_{N}}^{h})^{\prime}(\rho,x)$ admits the
integral representation
\begin{align}
(e_{\mathfrak{I}_{N}}^{h})^{\prime}(\rho,x)  &  = \left( i\rho+w(0,x)+\frac
{1}{2}\sigma_{\mathfrak{I}_{N}}(x)\right) e^{i\rho x}+h\cos(\rho x)\nonumber\\
& \quad+\sum_{k=0}^{N}\frac{\alpha_{k}}{2}H(x-x_{k})e^{-2i\rho(x-x_{k})}%
+\int_{-x}^{x}E_{\mathfrak{I}_{N}}^{h}(x,t)e^{i\rho t}%
dt,\label{derivativetransme}%
\end{align}
where $E_{\mathfrak{I}_{N}}^{h}(x,\cdot)\in L_{2}(-x,x)$ for all $x\in(0,b]$.
\end{theorem}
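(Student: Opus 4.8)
The derivation preceding the statement already contains essentially the whole argument; organizing it into a proof, I would proceed as follows.

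First I would justify differentiating the closed form $(\ref{SoleGral})$ termwise. Since $(\ref{SoleGral})$ is a finite sum and each summand is a product of functions that are $C^{1}$ on every subinterval $(x_{k},x_{k+1})$, this is legitimate off the points $x_{k}$; moreover the distributional contributions $H'(x-x_{k})\widehat s_{k}(\rho,x-x_{k})$ and $H'(x-x_{j_{|J|}})(\cdots)\widehat s_{j_{|J|}}(\rho,x-x_{j_{|J|}})$ coming from differentiating the Heaviside factors all vanish, because $\widehat s_{k}(\rho,0)=0$ for every $k$. The only analytic inputs needed are then: (i) the differentiated sine representation — from $(\ref{representationsinegeneral1})$ and the Goursat value $\widehat H_{k}(x,x)=\frac{1}{2}\int_{0}^{x}q(t+x_{k})\,dt$ one obtains $\widehat s_{k}'(\rho,x-x_{k})=\cos(\rho(x-x_{k}))+\int_{-(x-x_{k})}^{x-x_{k}}\widetilde K_{k}^{1}(x,t)e^{i\rho t}\,dt$ with the explicit kernel $\widetilde K_{k}^{1}$ displayed in the excerpt; (ii) the differentiated exponential representation — from $(\ref{transm1})$ and the Goursat values for $\widetilde K^{h}$ one obtains $\widetilde e_{h}'(\rho,x)=(i\rho+w(0,x))e^{i\rho x}+h\cos(\rho x)+\int_{-x}^{x}\partial_{x}\widetilde K^{h}(x,t)e^{i\rho t}\,dt$; and (iii) the fact that $\partial_{x}\widetilde K^{h}(x,\cdot)$ and $\partial_{x}\widehat H_{k}(x,\cdot)$ lie in $L_{2}$, which is classical for the regular equation (\cite{marchenko,yurko}).

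Next I would rewrite every product that appears after differentiation as the Fourier transform of an explicit, compactly supported kernel, using two elementary facts: the convolution theorem $\mathcal F(f\ast g)=\mathcal F f\cdot\mathcal F g$, and the identity
\[
\cos(\rho A)\int_{-B}^{B}f(t)e^{i\rho t}\,dt=\int_{-(A+B)}^{A+B}\frac{1}{2}\bigl(\chi_{[-(A+B),B-A]}(t)f(t-A)+\chi_{[A-B,A+B]}(t)f(t+A)\bigr)e^{i\rho t}\,dt
\]
valid for $A,B>0$ and $f\in L_{2}(\mathbb R)$ supported in $[-B,B]$. Applied to $\widetilde e_{h}(\rho,x_{j})\widehat s_{k}'(\rho,x-x_{k})$ this produces the kernel $\widehat K_{x_{j},x_{k}}(x,t)$ written in the excerpt, and then the general differentiated summand of $(\ref{SoleGral})$ becomes $\mathcal F$ of $\widehat K_{x_{j_{1}},x_{j_{|J|}}}(x,t)$ convolved with $\bigl(\prod_{l=1}^{|J|-1}\bigr)^{\ast}\bigl(\chi_{x_{j_{l+1}}-x_{j_{l}}}(t)\widetilde K_{k}(x_{j_{l+1}},t)\bigr)$. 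The terms not absorbed into a kernel are $i\rho e^{i\rho x}$, $w(0,x)e^{i\rho x}$ and $h\cos(\rho x)$ from $\widetilde e_{h}'$, together with $\alpha_{k}H(x-x_{k})e^{i\rho x_{k}}\cos(\rho(x-x_{k}))$ for $k=0,\dots,N$; since $e^{i\rho x_{k}}\cos(\rho(x-x_{k}))=\frac{1}{2}e^{i\rho x}\bigl(1+e^{-2i\rho(x-x_{k})}\bigr)$, the first halves sum to $\frac{1}{2}\sigma_{\mathfrak I_{N}}(x)e^{i\rho x}$ and the second halves to $\sum_{k=0}^{N}\frac{\alpha_{k}}{2}H(x-x_{k})e^{-2i\rho(x-x_{k})}$. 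Adding all pieces yields $(\ref{derivativetransme})$ with $E^{h}_{\mathfrak I_{N}}(x,t)$ the sum of all the kernels produced above. It then remains to check that $E^{h}_{\mathfrak I_{N}}(x,\cdot)$ is supported in $[-x,x]$ and lies in $L_{2}(-x,x)$: the support claim is the interval arithmetic of Theorem~\ref{thoremtransmoperator} applied to the new kernels (by Lemma~\ref{lemaconv}, $\widehat K_{x_{j},x_{k}}(x,\cdot)$ is supported in $[x_{k}-x-x_{j},\,x-x_{k}+x_{j}]$, the product chain in $[x_{j_{1}}-x_{j_{|J|}},\,x_{j_{|J|}}-x_{j_{1}}]$, and the outer convolution lands in $[-x,x]$, while the translated windows from the cosine identity stay inside $[-x,x]$); for the $L_{2}$ claim, $\partial_{x}\widetilde K^{h}(x,\cdot)$ and $\widetilde K_{k}^{1}(x,\cdot)$ are $L_{2}$ with compact support by (iii), $\widetilde K^{h}(x_{j},\cdot)$ and $\widetilde K_{k}(x_{j_{l+1}},\cdot)$ are in $L_{\infty}$ with compact support, and convolutions of an $L_{2}$-function of compact support with finitely many bounded functions of compact support remain in $L_{2}$ with compact support (Young's inequality, or Lemma~\ref{lemaconv}), so a finite sum of such functions is in $L_{2}(-x,x)$.

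I expect the only real difficulty to be organizational: tracking the finitely many support intervals through the nested convolutions once the cosine identity has roughly doubled the number of windowed terms relative to $(\ref{transmkernelgeneral})$. There is no genuinely new analytic ingredient beyond Theorem~\ref{thoremtransmoperator} and the classical $L_{2}$-regularity of the differentiated transmutation kernels for the regular potential, so the proof is mostly a careful bookkeeping of the computation already carried out in the text before the statement.
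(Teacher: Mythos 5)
Your proposal is correct and follows essentially the same route as the paper: the theorem is stated as the conclusion of the computation displayed immediately before it, and you reproduce that computation — termwise differentiation of (\ref{SoleGral}), the differentiated kernel representations for $\widehat{s}_k'$ and $\widetilde{e}_h'$ via the Goursat values, the convolution theorem together with the cosine identity to produce $\widehat{K}_{x_j,x_k}$, and Lemma \ref{lemaconv} for the support bookkeeping. Your explicit remark that the Heaviside derivative terms vanish because $\widehat{s}_k(\rho,0)=0$ is a detail the paper leaves implicit, but it does not change the argument.
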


\begin{corollary}
The derivatives of the solutions $c_{\mathfrak{I}_{N}}^{h}(\rho,x)$ and
$s_{\mathfrak{I}_{N}}(\rho,x)$ admit the integral representations
\begin{align}
(c_{\mathfrak{I}_{N}}^{h})^{\prime}(\rho,x)  &  =-\rho\sin(\rho x)+ \left(
h+w(0,x)+\frac{1}{2}\sigma_{\mathfrak{I}_{N}}(x)\right) \cos(\rho
x)\nonumber\\
& \quad+\sum_{k=0}^{N}\frac{\alpha_{k}}{2}H(x-x_{k})\cos(2\rho(x-x_{k}%
))+\int_{0}^{x}M_{\mathfrak{I}_{N}}^{h}(x,t)\cos(\rho
t)dt,\label{derivativetransmcosine}\\
s^{\prime}_{\mathfrak{I}_{N}}(\rho,x)  &  =\cos(\rho x)+ \left( w(0,x)+\frac
{1}{2}\sigma_{\mathfrak{I}_{N}}(x)\right) \frac{\sin(\rho x)}{\rho}\nonumber\\
& \quad-\sum_{k=0}^{N}\alpha_{k}H(x-x_{k})\frac{\sin(2\rho(x-x_{k}))}{2\rho
}+\int_{0}^{x}R_{\mathfrak{I}_{N}}(x,t)\frac{\sin(\rho t)}{\rho}%
dt,\label{derivativetransmsine}%
\end{align}
where
\begin{align}
N_{\mathfrak{I}_{N}}^{h}(x,t)  &  = E_{\mathfrak{I}_{N}}^{h}%
(x,t)+E_{\mathfrak{I}_{N}}^{h}(x,-t)\quad\label{kernelderivativecosine}\\
R_{\mathfrak{I}_{N}}^{h}(x,t)  &  = E_{\mathfrak{I}_{N}}^{h}%
(x,t)-E_{\mathfrak{I}_{N}}^{h}(x,-t),\label{kernelderivativesine}%
\end{align}
defined for $x\in[0,b]$ and $|t|\leqslant x$.
\end{corollary}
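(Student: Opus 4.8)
The plan is to deduce both formulas directly from the integral representation (\ref{derivativetransme}) of $(e_{\mathfrak{I}_{N}}^{h})^{\prime}(\rho,x)$ by using the elementary relations $c_{\mathfrak{I}_{N}}^{h}(\rho,x)=\tfrac12\bigl(e_{\mathfrak{I}_{N}}^{h}(\rho,x)+e_{\mathfrak{I}_{N}}^{h}(-\rho,x)\bigr)$ and $s_{\mathfrak{I}_{N}}(\rho,x)=\tfrac{1}{2i\rho}\bigl(e_{\mathfrak{I}_{N}}^{h}(\rho,x)-e_{\mathfrak{I}_{N}}^{h}(-\rho,x)\bigr)$ recorded just before Remark \ref{remarksinecosinefunda}. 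Differentiating these in $x$ (which is legitimate on each subinterval $(x_{k},x_{k+1})$ and, together with $c_{\mathfrak{I}_{N}}^{h},s_{\mathfrak{I}_{N}}\in AC[0,b]$, also describes the one-sided derivatives at the points $x_{k}$) gives $(c_{\mathfrak{I}_{N}}^{h})^{\prime}(\rho,x)=\tfrac12\bigl((e_{\mathfrak{I}_{N}}^{h})^{\prime}(\rho,x)+(e_{\mathfrak{I}_{N}}^{h})^{\prime}(-\rho,x)\bigr)$ and $s_{\mathfrak{I}_{N}}^{\prime}(\rho,x)=\tfrac{1}{2i\rho}\bigl((e_{\mathfrak{I}_{N}}^{h})^{\prime}(\rho,x)-(e_{\mathfrak{I}_{N}}^{h})^{\prime}(-\rho,x)\bigr)$.

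Then I would substitute (\ref{derivativetransme}) into these two combinations and simplify term by term. The identities $\tfrac12(e^{i\rho x}+e^{-i\rho x})=\cos(\rho x)$, $\tfrac{1}{2i\rho}(e^{i\rho x}-e^{-i\rho x})=\tfrac{\sin(\rho x)}{\rho}$, $\tfrac12\bigl(i\rho e^{i\rho x}-i\rho e^{-i\rho x}\bigr)=-\rho\sin(\rho x)$ and $\tfrac{1}{2i\rho}\bigl(i\rho e^{i\rho x}+i\rho e^{-i\rho x}\bigr)=\cos(\rho x)$ handle the leading terms. Since $w(0,x)$, $\tfrac12\sigma_{\mathfrak{I}_{N}}(x)$ and $h$ are independent of $\rho$, the terms $\bigl(w(0,x)+\tfrac12\sigma_{\mathfrak{I}_{N}}(x)\bigr)e^{i\rho x}$ and $h\cos(\rho x)$ produce $\bigl(h+w(0,x)+\tfrac12\sigma_{\mathfrak{I}_{N}}(x)\bigr)\cos(\rho x)$ in the cosine formula and $\bigl(w(0,x)+\tfrac12\sigma_{\mathfrak{I}_{N}}(x)\bigr)\tfrac{\sin(\rho x)}{\rho}$ in the sine formula, the even term $h\cos(\rho x)$ dropping out of the odd combination. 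For the point-interaction sum, $\rho\mapsto-\rho$ sends $e^{-2i\rho(x-x_{k})}$ to $e^{2i\rho(x-x_{k})}$, so each summand $\tfrac{\alpha_{k}}{2}H(x-x_{k})e^{-2i\rho(x-x_{k})}$ contributes $\tfrac{\alpha_{k}}{2}H(x-x_{k})\cos(2\rho(x-x_{k}))$ to (\ref{derivativetransmcosine}) and $-\alpha_{k}H(x-x_{k})\tfrac{\sin(2\rho(x-x_{k}))}{2\rho}$ to (\ref{derivativetransmsine}).

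Finally I would treat the integral term: symmetrizing $\int_{-x}^{x}E_{\mathfrak{I}_{N}}^{h}(x,t)e^{i\rho t}\,dt$ against its $\rho\mapsto-\rho$ counterpart yields $\int_{-x}^{x}E_{\mathfrak{I}_{N}}^{h}(x,t)\cos(\rho t)\,dt$ and $\tfrac1\rho\int_{-x}^{x}E_{\mathfrak{I}_{N}}^{h}(x,t)\sin(\rho t)\,dt$; splitting the interval at $0$ and applying the change of variable $t\mapsto-t$ on $[-x,0]$ (using that $\cos$ is even and $\sin$ is odd) folds these into $\int_{0}^{x}\bigl(E_{\mathfrak{I}_{N}}^{h}(x,t)+E_{\mathfrak{I}_{N}}^{h}(x,-t)\bigr)\cos(\rho t)\,dt$ and $\tfrac1\rho\int_{0}^{x}\bigl(E_{\mathfrak{I}_{N}}^{h}(x,t)-E_{\mathfrak{I}_{N}}^{h}(x,-t)\bigr)\sin(\rho t)\,dt$, which are exactly the integral terms in (\ref{derivativetransmcosine})--(\ref{derivativetransmsine}) with the kernels defined by (\ref{kernelderivativecosine})--(\ref{kernelderivativesine}); the membership $N_{\mathfrak{I}_{N}}^{h}(x,\cdot),R_{\mathfrak{I}_{N}}^{h}(x,\cdot)\in L_{2}(0,x)$ is immediate from $E_{\mathfrak{I}_{N}}^{h}(x,\cdot)\in L_{2}(-x,x)$. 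The argument is pure bookkeeping; the only point requiring care — and the closest thing to an obstacle — is correctly tracking the parity in $\rho$ of every factor so that the even/odd decomposition is applied consistently, with no genuine analytical difficulty beyond what is already contained in the preceding theorem.
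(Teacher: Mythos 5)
Your proposal is correct and is essentially the argument the paper intends: the corollary is the direct analogue of the passage from $K_{\mathfrak{I}_{N}}^{h}$ to the cosine and sine kernels $G_{\mathfrak{I}_{N}}^{h}$ and $S_{\mathfrak{I}_{N}}$, obtained by forming the even and odd combinations $\tfrac12\bigl((e_{\mathfrak{I}_{N}}^{h})^{\prime}(\rho,x)+(e_{\mathfrak{I}_{N}}^{h})^{\prime}(-\rho,x)\bigr)$ and $\tfrac{1}{2i\rho}\bigl((e_{\mathfrak{I}_{N}}^{h})^{\prime}(\rho,x)-(e_{\mathfrak{I}_{N}}^{h})^{\prime}(-\rho,x)\bigr)$ in the representation of $(e_{\mathfrak{I}_{N}}^{h})^{\prime}$ and folding the integral over $[-x,x]$ onto $[0,x]$ by parity, exactly as you do (the symbol $M_{\mathfrak{I}_{N}}^{h}$ in the statement versus $N_{\mathfrak{I}_{N}}^{h}$ in its definition is a typo in the paper, and your identification of the kernel as $E_{\mathfrak{I}_{N}}^{h}(x,t)+E_{\mathfrak{I}_{N}}^{h}(x,-t)$ is the intended one). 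Your term-by-term bookkeeping checks out, so nothing further is needed.
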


\begin{corollary}
\label{CorNSBFforderivatives}  The derivatives of the solutions
$c_{\mathfrak{I}_{N}}^{h}(\rho,x)$ and $s_{\mathfrak{I}_{N}}(\rho,x)$ admit
the NSBF representations
\begin{align}
(c_{\mathfrak{I}_{N}}^{h})^{\prime}(\rho,x)  &  =-\rho\sin(\rho x)+ \left(
h+w(0,x)+\frac{1}{2}\sigma_{\mathfrak{I}_{N}}(x)\right) \cos(\rho
x)\nonumber\\
& \quad+\sum_{k=0}^{N}\frac{\alpha_{k}}{2}H(x-x_{k})\cos(2\rho(x-x_{k}%
))+\sum_{n=0}^{\infty}(-1)^{n}l_{n}(x)j_{2n}(\rho
x),\label{NSBFderivativetransmcosine}\\
s^{\prime}_{\mathfrak{I}_{N}}(\rho,x)  &  =\cos(\rho x)+ \left( w(0,x)+\frac
{1}{2}\sigma_{\mathfrak{I}_{N}}(x)\right) \frac{\sin(\rho x)}{\rho}\nonumber\\
& \quad-\sum_{k=0}^{N}\alpha_{k}H(x-x_{k})\frac{\sin(2\rho(x-x_{k}))}{2\rho
}+\sum_{n=0}^{\infty}(-1)^{n}r_{n}(x)j_{2n+1}(\rho
x),\label{NSBFderivativetransmsine}%
\end{align}
where $\{l_{n}(x)\}_{n=0}^{\infty}$ and $\{r_{n}(x)\}_{n=0}^{\infty}$ are the
coefficients of the Fourier-Legendre expansion of $M_{\mathfrak{I}_{N}}%
^{h}(x,t)$ and $R_{\mathfrak{J}_{N}}(x,t)$ in terms of the even and odd
Legendre polynomials, respectively.
\end{corollary}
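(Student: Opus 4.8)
The plan is to reproduce, for the derivatives, the argument used in the proof of Theorem \ref{ThNSBF1}, now taking as starting point the integral representations (\ref{derivativetransmcosine}) and (\ref{derivativetransmsine}) established above instead of (\ref{transmcosinegral}) and (\ref{transmsinegral}). First I would record that, by the integral representation (\ref{derivativetransme}) obtained just above, $E_{\mathfrak{I}_{N}}^{h}(x,\cdot)\in L_{2}(-x,x)$ for every $x\in(0,b]$; consequently the derivative kernels $M_{\mathfrak{I}_{N}}^{h}(x,t)=E_{\mathfrak{I}_{N}}^{h}(x,t)+E_{\mathfrak{I}_{N}}^{h}(x,-t)$ and $R_{\mathfrak{I}_{N}}(x,t)=E_{\mathfrak{I}_{N}}^{h}(x,t)-E_{\mathfrak{I}_{N}}^{h}(x,-t)$ belong to $L_{2}(0,x)$ and are, respectively, even and odd in $t$. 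I would then expand each of them in the even, resp. odd, Legendre system, which is complete in $L_{2}(0,x)$ (exactly as the cosine and sine kernels were treated to obtain (\ref{Fourierexpansioncosine}) and (\ref{Fourierexpansionsine})):
\[
M_{\mathfrak{I}_{N}}^{h}(x,t)=\sum_{n=0}^{\infty}\frac{l_{n}(x)}{x}P_{2n}\!\left(\frac{t}{x}\right),\qquad R_{\mathfrak{I}_{N}}(x,t)=\sum_{n=0}^{\infty}\frac{r_{n}(x)}{x}P_{2n+1}\!\left(\frac{t}{x}\right),
\]
the series converging in $L_{2}(0,x)$ and the coefficients $l_{n}(x),r_{n}(x)$ being given by the usual Parseval formulas.

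Next I would substitute these expansions into (\ref{derivativetransmcosine}) and (\ref{derivativetransmsine}) and interchange summation with integration. As in the proof of Theorem \ref{ThNSBF1}, this is legitimate because $\int_{0}^{x}(\cdot)\cos(\rho t)\,dt$ and $\int_{0}^{x}(\cdot)\tfrac{\sin(\rho t)}{\rho}\,dt$ are, for fixed $\rho$, continuous linear functionals on $L_{2}(0,x)$. Then I would evaluate the model integrals by formula 2.17.7 of \cite[p. 433]{prudnikov}, which gives $\int_{0}^{x}P_{2n}(t/x)\cos(\rho t)\,dt=(-1)^{n}\sqrt{\pi x/(2\rho)}\,J_{2n+\frac12}(\rho x)=(-1)^{n}x\,j_{2n}(\rho x)$ and the companion identity with $P_{2n+1}$, $\sin(\rho t)$ and $J_{2n+\frac32}$; collecting terms turns the integral remainders in (\ref{derivativetransmcosine}) and (\ref{derivativetransmsine}) into $\sum_{n=0}^{\infty}(-1)^{n}l_{n}(x)j_{2n}(\rho x)$ and $\tfrac1\rho\sum_{n=0}^{\infty}(-1)^{n}r_{n}(x)j_{2n+1}(\rho x)$, which yields (\ref{NSBFderivativetransmcosine}) and (\ref{NSBFderivativetransmsine}).

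For the convergence assertions I would copy the tail estimate from the proof of Theorem \ref{ThNSBF1}: the error after $M$ terms equals $\int_{0}^{x}\big(M_{\mathfrak{I}_{N}}^{h}(x,t)-\sum_{n=0}^{M}\tfrac{l_{n}(x)}{x}P_{2n}(t/x)\big)\cos(\rho t)\,dt$ (and the analogous expression with $R_{\mathfrak{I}_{N}}$, $P_{2n+1}$, $\sin$), and the Cauchy--Bunyakovsky--Schwarz inequality together with $\int_{0}^{x}|\cos(\rho t)|^{2}\,dt\leqslant\sinh(2x\operatorname{Im}\rho)/\operatorname{Im}\rho\leqslant\sinh(2bC)/C$ on the strip $|\operatorname{Im}\rho|\leqslant C$ bounds it by a constant multiple of $\|M_{\mathfrak{I}_{N}}^{h}(x,\cdot)-\sum_{n=0}^{M}\tfrac{l_{n}(x)}{x}P_{2n}(\cdot/x)\|_{L_{2}(0,x)}\to0$; this gives pointwise convergence in $x\in(0,b]$ and uniform convergence in $\rho$ on compact subsets of $\mathbb{C}$. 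I do not expect a genuine obstacle here; the only point requiring attention is the preliminary verification that $M_{\mathfrak{I}_{N}}^{h}(x,\cdot)$ and $R_{\mathfrak{I}_{N}}(x,\cdot)$ really lie in $L_{2}(0,x)$ and have the stated parity in $t$ — which is precisely what the preceding theorem delivers — since it is this that licenses the use of the even/odd Legendre systems and, through them, the application of Prudnikov's formula verbatim.
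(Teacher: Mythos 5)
Your argument is correct and is precisely the route the paper intends: the corollary is stated without a separate proof as an immediate consequence of the integral representations (\ref{derivativetransmcosine})--(\ref{derivativetransmsine}), obtained by repeating, for the even kernel $M_{\mathfrak{I}_{N}}^{h}(x,\cdot)$ and the odd kernel $R_{\mathfrak{I}_{N}}(x,\cdot)$, the Fourier--Legendre expansion and the Prudnikov integral evaluation used in the proof of Theorem \ref{ThNSBF1}, together with the same Cauchy--Bunyakovsky--Schwarz tail estimate. Note only that your (correct) computation yields the prefactor $\frac{1}{\rho}$ in front of the series $\sum_{n=0}^{\infty}(-1)^{n}r_{n}(x)j_{2n+1}(\rho x)$, consistent with (\ref{NSBFsine}); its absence in the displayed formula (\ref{NSBFderivativetransmsine}) appears to be a typographical slip in the paper rather than a defect of your argument.
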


\section{Conclusions}

The construction of a transmutation operator that transmute the solutions of
equation $v^{\prime\prime }+\lambda v=0$ into solutions of (\ref{Schrwithdelta}) is
presented. The transmutation operator is obtained from the closed form of the
general solution of equation (\ref{Schrwithdelta}). It was shown how to
construct the image of the transmutation operator on the set of polynomials,
this with the aid of the SPPS method. A Fourier-Legendre series representation
for the integral transmutation kernel is obtained, together with a
representation for the solutions $c_{\mathfrak{I}_{N}}^{h}(\rho,x)$,
$s_{\mathfrak{I}_{N}}(\rho,x)$ and their derivatives as Neumann series of
Bessel functions, together with integral recursive relations for the construction of the Fourier-Legendre coefficients. The series (\ref{NSBFcosine}), (\ref{NSBFsine}),
(\ref{NSBFderivativetransmcosine}), (\ref{NSBFderivativetransmsine}) are
useful for solving direct and inverse spectral problems for
(\ref{Schrwithdelta}), as shown for the regular case
\cite{inverse1,directandinverse,neumann,neumann2}.

\section*{Acknowledgments}

Research was supported by CONACYT, M\'exico via the project 284470.
\section*{Declarations}
\subsection*{Conflict of interest}
The authors declare no potential conflict of interest.
\subsection*{Data availability}
Data sharing not applicable to this article as no datasets were generated or analysed during the current study.

\end{document}